\newcommand{\nc}{\newcommand}
\newtheorem{thm}{Theorem}
\theoremstyle{plain}
\nc{\bthm}{\begin{thm}} \nc{\ethm}{\end{thm}}
\newtheorem{prop}[thm]{Proposition}
\nc{\bprp}{\begin{prop}} \nc{\eprp}{\end{prop}}
\newtheorem{fact}[thm]{Fact}
\nc{\bfct}{\begin{fact}} \nc{\efct}{\end{fact}}
\newtheorem{prob}[thm]{Problem}
\nc{\bprb}{\begin{prob}} \nc{\eprb}{\end{prob}}
\newtheorem{lem}[thm]{Lemma}
\nc{\blem}{\begin{lem}} \nc{\elem}{\end{lem}}
\newtheorem{claim}[thm]{Claim}
\nc{\bclm}{\begin{claim}} \nc{\eclm}{\end{claim}}
\newtheorem{cor}[thm]{Corollary}
\nc{\bcor}{\begin{cor}} \nc{\ecor}{\end{cor}}
\newtheorem{conj}[thm]{Conjecture}
\nc{\bcnj}{\begin{conj}} \nc{\ecnj}{\end{conj}}
\theoremstyle{definition}
\newtheorem{defn}[thm]{Definition}
\nc{\bdfn}{\begin{defn}} \nc{\edfn}{\end{defn}}
\newtheorem{observation}[thm]{Observation}
\nc{\bobs}{\begin{observation}} \nc{\eobs}{\end{observation}}
\theoremstyle{remark}
\newtheorem{rem}[thm]{Remark}
\nc{\brem}{\begin{rem}} \nc{\erem}{\end{rem}}
\newtheorem{cnv}[thm]{Convention}
\nc{\bcnv}{\begin{cnv}} \nc{\ecnv}{\end{cnv}}
\newtheorem{exam}[thm]{Example}
\nc{\bexm}{\begin{exam}} \nc{\eexm}{\end{exam}}
\newtheorem{question}[thm]{Questuion}
\nc{\bpf}{\begin{proof}} \nc{\epf}{\end{proof}}
\nc{\be}{\begin{enumerate}}
	\nc{\ee}{\end{enumerate}}
\nc{\bi}{\begin{itemize}}
	\nc{\itm}{\item}
	\nc{\ei}{\end{itemize}}
\nc{\invlim}{\lim_{\leftarrow}}
\nc{\dirlim}{\lim_{\rightarrow}}
\nc{\mm}{\mathbf{m}}
\nc{\nn}{\mathbf{n}}
\nc{\FF}{\mathcal{F}}
\nc{\CC}{\mathcal{C}}
\nc{\Span}{\operatorname{span}}
\nc{\Img}{\operatorname{Im}}
\nc{\rank}{\operatorname{rank}}
\nc{\proj}{\operatorname{proj}}
\nc{\F}{\mathbb{F}}
\nc{\Z}{\mathbb{Z}}
\nc{\Q}{\mathbb{Q}}
\nc{\Br}{\operatorname{Br}}
\title{ Demushkin groups of uncountable rank}
\author{Tamar Bar-On and Nikolay Nikolov}
\date{\today}
\begin{document}
	\maketitle
	\begin{abstract}
		We extend the theory of countably generated Demushkin groups to Demushkin groups of arbitrary rank. We investigate their algebraic properties and invariants, count their isomorphism classes and study their realization as absolute Galois group. At the end, we compute their profinite completion and conclude with some results on profinite completion of absolute Galois groups.
	\end{abstract}
	\section*{Introduction}
	Let $G$ be a finitely generated pro-$p$ group. We say that $G$ is a \textit{Demushkin} group if the following two properties are satisfied: \begin{enumerate}
		
		\item $\dim H^2(G)=1$
		\item The cup product yields a nondegenerate bilinear form $H^1(G)\times H^1(G)\to H^2(G)$. 
	\end{enumerate}
	Here and below we shall write $H^n(G)$ for $H^n(G,\F_p)$.
	
	Infinite Demushkin groups are precisely the Poincare-Duality pro-$p$ groups of dimension 2. In particular they have cohomological dimension $2$, and satisfy that every open subgroup is itself Demushkin. Demushkin groups play an important role in number theory. Serre proved in \cite{serre1962structure}, \cite{serre1979galois}, that the maximal pro-$p$ 	Galois groups of finite extensions of $\mathbb{Q}_p$ which contain primitive root of identity are all Demushkin groups.It is still an open question whether every finitely generated Demushkin group can be realized as a maximal pro-$p$ Galois group of a field. Here by maximal pro-$p$ Galois group of a field we mean the maximal pro-$p$ quotient of its absolute Galois group.
	
	Since $\dim(H^2(G))=1$, $G$ can be presented as a quotient $F/\langle r^F \rangle$ of a free pro-$p$ group of rank $\dim(H^1(G))$, by the normal subgroup generated by one relator $r\in \Phi(F)$.  The structure of $r$ was completely classified in \cite{demushkin1961group},\cite{demushkin19632} and \cite{labute1967classification}. Moreover, Demushkin groups are completely determined by two invariants defined as follows: Let $q(G)=p^h$ for $h$ the maximal natural integer such that $r\in F^{p^h}[F,F]$ or $q(G)=0$ in case $r\in [F,F]$. Notice that this definition doesn't depend on the choice of $r$, as $G/[G,G]\cong \mathbb{Z}_p/q(G)\times (\mathbb{Z}_q)^{\rank(G)-1}$. If $q\ne 2$ then for every natural number $n> 1$ there is a unique Demushkin group of rank $n$ for which $q(G)=q$. If $q=2$ the situation is more complicated.  A second invariant was defined by Serre for this case: Being a Poincare duality group, every infinite Demushkin group comes equipped with a dualizing module $I$, which is isomorphic to $\mathbb{Q}_p/\mathbb{Z}_p$. Hence, we denote by $\chi:G\to I^{\times}$ the induced homomorphism, and define the invariant $\Img(\chi)$. 
	
	In \cite{labute1966demuvskin} Labute generalized the theory of Demushkin groups to Demushkin groups of rank $\aleph_0$. By a Demushkin group of rank $\aleph_0$ he referred to  pro-$p$ groups $G$ satisfying the conditions 1 and 2 above, and such that $\dim(H^1(G))=\aleph_0$. He gave an almost full characterization, except to one case, for which he defined two more invariants: $s(G)$ and $t(G)$. Since Poincare-Duality groups are necessarily finitely generated, a Demushkin group of rank $\aleph_0$ is no longer a Poincare duality group. However, it can be shown that it has a dualizing module $I$, which is isomorphic to either $\mathbb{Q}_p/\mathbb{Z}_p$ or to $\mathbb Z/ q\mathbb Z$ for an integer $q$ which is a prime power of $p$. We define the invariant $s(G)$ to be zero in the former case and equal to $q$ in the latter case. Labute showed that for every $q\ne 2$ and $s\geq q$ or zero, there exists a unique Demushkin group of countable rank $G$ such that $q(G)=q$ and $s(G)=s$. The invariant $t(G)$, which is only interesting in the case $q\ne 2$ will be discussed later. In his paper \cite{labute1966demuvskin} Labute also proved that the $p$-Sylow subgroups of the absolute Galois groups of local fields containing  a primitive root of unity are Demushkin groups of countable rank. This work was completed by Minac and Ware, who proved in \cite{minavc1991demuvskin} and \cite{minavc1992pro} that for every prime $p\ne 2$ a pro-$p$ Demushkin group of rank $\aleph_0$ occurs as an absolute Galois group of some field if and only if $s(G)=0$. Moreover, a pro-$2$ Demushkin group $G$ of rank $\aleph_0$ occurs as an absolute Galois group if and only if $s(G)=0$ and the couple  $(t(G),\Img(\chi))$ belongs to a specific list of possibilities.
	
	The object of this paper is to present a theory of Demushkin group of uncountable rank, and discuss their applications to number theory. Let $\mu>\aleph_0$ be some cardinal. We say that a pro-$p$ group with $\dim(H^1(G))=\mu$ is Demushkin, if it satisfies the conditions 1 and 2 at the start of the paper. 
	
	The paper is organized as follows: In Section 1 we give some necessary background from Galois cohomology and bilnear form theory. In Section 2 we discuss the algebraic properties of a Demushkin group of uncountable rank, and in particular prove the following:
	\begin{thm}
		Let $q\ne 2$ be a prime power or equal to 0, and $\mu>\aleph_0$. For every nondegenerate skew symmetric bilinear form $(V,\varphi)$ of dimension $V$ there is a Demushkin group $G$ of rank $\mu$, with $q(G)=q, s(G)=0$, and whose cup-product bilinear form $H^1(G)\cup H^1(G)\to H^2(G)$ is isomorphic to $(V,\varphi)$.
	\end{thm}   
	\begin{cor}
		For every cardinal $\mu>\aleph_0$, $p$ a prime, and $q\ne 2$ a power of $p$ or zero, there are $2^{\mu}$ pairwise nonisomorphic Demushkin pro-$p$ groups $G$ of rank $\mu$ with $q(G)=q, s(G)=0$.
	\end{cor}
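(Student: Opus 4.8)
The plan is to sandwich the count between an easy upper bound of $2^{\mu}$ coming from one-relator presentations and a lower bound of $2^{\mu}$ coming from isometry invariants of the cup-product form, using the Theorem to produce the groups realizing prescribed forms. For the upper bound I would use that $\dim H^2(G)=1$ forces every Demushkin group $G$ of rank $\mu$ to be a one-relator pro-$p$ group: writing $F$ for the free pro-$p$ group of rank $\mu$ (generators converging to $1$), we have $G\cong F/\langle r^F\rangle$ for a single $r\in\Phi(F)$. Since every continuous homomorphism from $F$ onto a finite $p$-group has finite support on the generators, $F$ has at most $\mu$ open normal subgroups, whence $F\hookrightarrow\prod_N F/N$ gives $|F|\le \aleph_0^{\mu}=2^{\mu}$, and $|F|\ge |F/\Phi(F)|=2^{\mu}$; so $|F|=2^{\mu}$. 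There are therefore at most $2^{\mu}$ choices of relator $r$, hence at most $2^{\mu}$ isomorphism classes of Demushkin groups of rank $\mu$, in particular at most $2^{\mu}$ with $q(G)=q$ and $s(G)=0$.

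For the lower bound the key observation is that the cup-product form is an isomorphism invariant: any isomorphism $G\xrightarrow{\sim}G'$ induces an isometry between $(H^1(G),\cup_G)$ and $(H^1(G'),\cup_{G'})$, compatibly with the canonical identifications $H^2\cong\F_p$. Consequently the cup product defines a map from isomorphism classes of Demushkin groups with $q(G)=q$, $s(G)=0$ to isometry classes of nondegenerate skew-symmetric $\F_p$-forms of dimension $\mu$, and by the Theorem this map is surjective. Hence the number of such groups is at least the number of isometry classes of such forms, and it suffices to exhibit $2^{\mu}$ pairwise non-isometric nondegenerate skew-symmetric forms of dimension $\mu$ over $\F_p$.

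This last step is the main obstacle, and it is genuinely an infinite-dimensional phenomenon. In finite even dimension every nondegenerate alternating form is hyperbolic, so no variety is available there; moreover any construction that splits off a full system of hyperbolic planes—for instance perturbing a hyperbolic form by ``graph'' terms on an auxiliary basis—collapses back to the standard symplectic form after a change of basis, since the extra hyperbolic pairs can be used to clear the perturbation. The separating forms must therefore be \emph{non-split}, i.e. admit no symplectic basis. I would build an explicit family $\{(V,\varphi_x)\}_x$ indexed by a set of size $2^{\mu}$ of rigid combinatorial data (an ordinal-indexed arrangement of non-split indecomposable blocks that $\mathrm{GL}(V)$ cannot permute away) and distinguish its members by an isometry invariant from the theory of bilinear forms on infinite-dimensional spaces—for example the transfinite sequence of dimensions obtained by iterating the biorthogonal closure $U\mapsto U^{\perp\perp}$ and successively removing the maximal hyperbolic part.

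The hard part is precisely to verify that some such invariant really takes $2^{\mu}$ distinct values on the family; note that coarse invariants cannot suffice, since a subset of $\mu$ determines only $\mu+1$ permutation-orbits, so the data encoded must be finer than a subset up to symmetry (this is why an ordered, rigid pattern is needed). Once a family of $2^{\mu}$ pairwise non-isometric forms is in hand, realizing each by the Theorem and applying the invariance of the cup product yields $2^{\mu}$ pairwise non-isomorphic Demushkin groups of rank $\mu$ with $q(G)=q$ and $s(G)=0$; together with the upper bound this gives exactly $2^{\mu}$, as claimed.
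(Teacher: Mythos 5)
Your overall skeleton is the same as the paper's: realize every nondegenerate alternating form of dimension $\mu$ as the cup-product form of a Demushkin group with $q(G)=q$, $s(G)=0$ (the paper's Proposition \ref{group for every form}), observe that an isomorphism of groups induces an isometry of cup-product forms, and thereby reduce the count to exhibiting $2^{\mu}$ pairwise nonisometric nondegenerate alternating forms over $\F_p$ in dimension $\mu$. (Your upper-bound half of the sandwich is correct but superfluous: the statement only asserts the existence of $2^{\mu}$ pairwise nonisomorphic groups, and the paper accordingly proves only the lower bound.)

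The genuine gap is that the step you defer is the entire mathematical content of the corollary, and your proposal does not prove it. You acknowledge this yourself (``the hard part is precisely to verify that some such invariant really takes $2^{\mu}$ distinct values''): what you sketch --- a rigid transfinite arrangement of non-split blocks, to be distinguished by iterating biorthogonal closures and stripping maximal hyperbolic parts --- is a plan, not an argument; no family is actually constructed, and no invariant is shown to separate $2^{\mu}$ isometry classes. The paper closes exactly this gap by citing Hall's Main Theorem \cite{hall1988number} (Theorem \ref{maximal number of forms}), which says that over any field and any uncountable $\mu$ there are $2^{\mu}$ pairwise nonisometric nondegenerate alternating forms of dimension $\mu$. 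It is worth noting that the known proof of that theorem --- and the paper's own adaptation of it to nonalternate skew-symmetric forms over $\F_2$ in Theorem \ref{forms over F_2} --- is model-theoretic rather than invariant-theoretic: one encodes the forms as models of a first-order theory, shows the theory is unstable by embedding linear orders via the form, and then invokes Shelah's many-models theorem \cite{shelah1972combinatorial} for unstable theories in uncountable cardinalities. This is quite far from the explicit-invariant route you propose, and there is no evidence in your proposal that such an invariant can be made to take $2^{\mu}$ values; so, as written, the argument is incomplete at its central step.
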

	In addition, for the $q=2$ case, we have the following:
	\begin{thm}
		For every choice of $t\in\{-1,0,1\}$, a subgroup $A$ of $\mathbb{Z}_2^{\times}$ and uncountable cardinal $\mu$, there are $2^{\mu}$ pairwise nonisomorphic pro-$2$ Demushkin groups $G$ of rank $\mu$ with $s(G)=0, t(G)=t$ and $\Img(\chi)=A$. 
	\end{thm}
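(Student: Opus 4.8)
The plan is to follow the strategy used for the $q\neq 2$ theorem above, replacing nondegenerate alternating forms by nondegenerate \emph{symmetric} bilinear forms over $\F_2$ and carrying along the extra characteristic-$2$ data that records $t(G)$ and $\Img(\chi)$. The structural point is that for a pro-$2$ group $G=F/\langle r^F\rangle$ of rank $\mu$ with $\dim H^2(G)=1$, the cup product turns $H^1(G)$ into an $\F_2$-vector space of dimension $\mu$ equipped with a nondegenerate symmetric form $\varphi$ which, in characteristic $2$, need not be alternating. The failure of $\varphi$ to be alternating, together with its quadratic (Arf-type) refinement and the $2$-power occurring in $r$, is exactly what $t$ and $\chi$ measure, while $s(G)=0$ fixes the dualizing module to be $\Q_2/\Z_2$. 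The whole point is that in uncountable rank the tuple $(t,\Img(\chi),s)$ is a \emph{coarse} invariant that leaves the fine isometry type of $\varphi$ free, and this freedom will be the source of the $2^\mu$ classes.

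First I would set up the realization. Fixing $t\in\{-1,0,1\}$ and $A\le\Z_2^{\times}$, I would select one of Labute's three normal-form relators for the $q=2$ case as a template — schematically $x_0^{2^f}[x_0,x_1]\cdots$, $x_0^{2+2^f}[x_0,x_1]\cdots$, or $x_0^{2}x_1^{2^f}[x_1,x_2]\cdots$ — choosing the family by $t$ and the exponent $f$ so that the induced dualizing character has image exactly $A$. Given any nondegenerate symmetric form $(V,\varphi)$ over $\F_2$ of dimension $\mu$ of the type prescribed by $t$, I would then define $G$ as the quotient of the free pro-$2$ group $F$ of rank $\mu$ by the normal closure of the corresponding relator, exactly as in the preceding theorem but with the symmetric (rather than alternating) square/commutator pattern encoding $\varphi$. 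One then checks, by the same cohomological computation as in the $q\neq 2$ case, that $\dim H^2(G)=1$, that the cup product is nondegenerate with associated form isometric to $\varphi$, and that $q(G)=2$, $s(G)=0$, $t(G)=t$ and $\Img(\chi)=A$, so that $G$ is a Demushkin group of rank $\mu$ with the prescribed invariants.

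Next I would produce the $2^\mu$ classes. A nondegenerate symmetric $\F_2$-form splits (with the usual characteristic-$2$ care) into a diagonal/quadratic part, which controls the type and hence fixes $t$, and an alternating part that may be varied independently. Keeping the diagonal part and the data $(A,f)$ fixed while letting the alternating part range over its $2^\mu$ isometry classes — the count already established in Section~1 and used for the $q\neq 2$ corollary — produces $2^\mu$ nondegenerate symmetric forms of dimension $\mu$, all of type $t$. Feeding these into the construction yields $2^\mu$ Demushkin groups of rank $\mu$ with identical invariants $s(G)=0$, $t(G)=t$, $\Img(\chi)=A$ but pairwise non-isometric cup products. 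Since the isometry class of the cup product is an isomorphism invariant of $G$, these groups are pairwise non-isomorphic, which is the assertion.

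The main obstacle is the characteristic-$2$ bookkeeping. Over $\F_2$ symmetric forms are genuinely subtle — alternating versus non-alternating, together with the Arf-type refinement — so two points need real care: (i) showing that fixing $t$ and $A$ still leaves $2^\mu$ isometry classes available, i.e.\ that imposing the coarse type does not collapse the moduli; and (ii) checking that the construction realizes $t(G)$ and $\Img(\chi)$ \emph{exactly}, that neither invariant drifts as $\varphi$ varies within its type, and that the Demushkin conditions hold for the char-$2$ relator. This is precisely where the classical $p=2$, $q=2$ theory is hardest, and where the computation of the preceding theorem must be redone rather than quoted verbatim.
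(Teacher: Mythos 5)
Your overall architecture matches the paper's: build explicit one-relator pro-$2$ groups whose relator encodes a prescribed nondegenerate non-alternating form (Labute-type exponents $2+2^f$, $2$, $2^f$ chosen to force the dualizing character, via property $P$, to have image $A$ and $s(G)=0$), and then feed in $2^\mu$ pairwise non-isometric forms of the given type $t$, using the fact that the isometry class of the cup product is an isomorphism invariant. The realization step is essentially the paper's Cases 1--3. The genuine gap is in your counting step. The paper does \emph{not} obtain the $2^\mu$ forms of fixed type $t$ from the alternating count; it proves a separate theorem (Theorem \ref{forms over F_2}) by redoing Hall's model-theoretic argument: one writes down a first-order theory of nondegenerate non-alternating skew-symmetric $\F_2$-forms with $t(\varphi)=t$, exhibits for each $t\in\{-1,0,1\}$ an explicit model in which $(\mu,<)$ is definably embedded, and invokes Shelah's instability theorem to get $2^\mu$ non-isomorphic models of each uncountable cardinality. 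Your proposed shortcut --- fix a finite ``diagonal'' part, vary the alternating complement over its $2^\mu$ classes --- implicitly requires a Witt-cancellation statement ($\varphi_0\perp\psi_1\cong\varphi_0\perp\psi_2\Rightarrow\psi_1\cong\psi_2$), and cancellation is false for symmetric forms in characteristic $2$: already $\langle 1\rangle\perp H\cong\langle 1,1,1\rangle\cong\langle 1\rangle\perp\langle 1,1\rangle$ while $H\not\cong\langle 1,1\rangle$. For $t=\pm 1$ your count can be repaired, because with the templates above the alternating part is canonically recoverable as $\varphi|_{\ker\beta}$ modulo its radical ($\beta(v)=\varphi(v,v)$ is linear over $\F_2$, so $\ker\beta$ is intrinsic); but you never give this recovery argument, and you flagged exactly this point as unresolved.

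For $t=0$ the shortcut cannot be repaired at all: if $\varphi=\varphi_0\perp\psi$ with $\varphi_0$ finite-dimensional non-alternating and $\psi$ alternating nondegenerate, then $A^\perp=(\ker\beta_0)^{\perp}\cap V_0\neq 0$, so every such form has $t(\varphi)\in\{-1,1\}$. Forms with $t=0$ (e.g.\ the paper's model with $\varphi(v_i,v_i)=1$ for all $i\in\mu$) have their non-alternating behaviour spread over infinitely many basis vectors and admit no decomposition of your shape; and if you instead fix an infinite-dimensional diagonal part, the recovery of $\psi$ from $\varphi|_{\ker\beta}$ reduces to cancellation of an \emph{infinite-dimensional} alternating summand, which is exactly the kind of statement that fails (or at least is unavailable) in uncountable dimension --- this wildness is the whole reason there are $2^\mu$ classes. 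A secondary issue: you assert you can hit an arbitrary subgroup $A\leq\Z_2^\times$ for every $t$, but the paper's own construction only verifies specific compatible pairs ($t=1$ with $U_2^{[f]}$ or $\{\pm1\}\times U_2^{(f)}$, and $t=-1$ with $\{\pm1\}\times U_2^{(f)}$), and realizability of a given pair $(t,A)$ is a genuine constraint that must be checked relator by relator, not assumed.
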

	We also give the following two characterizations of Demushkin groups of arbitrary rank:
	\begin{thm}
		Let $G$ be a pro-$p$ group. The following are equivalent:
		\begin{enumerate}
			\item $G$ is a Demushkin group.
			\item $\operatorname{cd}(G)=2$ and the dualizing module $I$ of $G$ has a unique additive subgroup of size $p$.
		\end{enumerate}
	\end{thm}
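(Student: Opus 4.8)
The plan is to restate condition (2) cohomologically and then pass between it and the two defining properties of a Demushkin group through Serre's dualizing module. First I would record the elementary reduction: in an abelian $p$-primary torsion group the subgroups of order $p$ are the lines of the $\F_p$-vector space $I[p]$, so $I$ has a unique such subgroup exactly when $\dim_{\F_p}I[p]=1$; and an abelian $p$-group with one-dimensional socle is either a finite cyclic group $\Z/p^{k}$ or the Pr\"ufer group $\Q_p/\Z_p$. Thus condition (2) reads: $\operatorname{cd}(G)=2$ and $I\cong\Z/p^{k}$ or $I\cong\Q_p/\Z_p$. I would also note that, $G$ being pro-$p$, its continuous action on a one-dimensional $\F_p$-space factors through $\F_p^{\times}$ and is therefore trivial, so in this situation $(I[p])^{G}=I[p]$.

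The bridge is the defining isomorphism of the dualizing module: for $\operatorname{cd}_p(G)=2$ and every finite $G$-module $M$ one has $H^{2}(G,M)^{*}\cong\operatorname{Hom}_{G}(M,I)$, where $(\,\cdot\,)^{*}=\operatorname{Hom}(\,\cdot\,,\Q_p/\Z_p)$ is the Pontryagin dual. Taking $M=\F_p$ with trivial action gives $H^{2}(G,\F_p)^{*}\cong(I[p])^{G}$, hence $\dim H^{2}(G,\F_p)=\dim_{\F_p}(I[p])^{G}$. This already ties the first Demushkin axiom to the fixed points of $I[p]$; the essential work is to promote statements about $(I[p])^{G}$ to statements about all of $I[p]$, and this is exactly where the cup-product nondegeneracy is consumed.

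For (2)$\Rightarrow$(1) I would argue as follows. From $\dim_{\F_p}I[p]=1$ and the previous paragraph, $\dim H^{2}(G,\F_p)=\dim(I[p])^{G}=\dim I[p]=1$, which is the first axiom. For the second, I would identify the cup-product form $H^{1}(G,\F_p)\times H^{1}(G,\F_p)\to H^{2}(G,\F_p)$ with the degree-one duality pairing $H^{1}(G,\F_p)\times H^{1}(G,\operatorname{Hom}(\F_p,I))\to H^{2}(G,I)$ followed by the trace, using $\operatorname{Hom}(\F_p,I)=I[p]=\F_p$. Its perfectness is the assertion that $G$ is a duality group of dimension $2$, which I would verify by checking the vanishing of the lower dualizing functors $D^{i}(G)$ for $i<2$; for $\operatorname{cd}=2$ this comes down to $D^{1}(G)=0$, and I expect the explicit shape $I\cong\Z/p^{k}$ or $\Q_p/\Z_p$ to make this transparent. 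Nondegeneracy of the mod-$p$ form then follows.

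For (1)$\Rightarrow$(2) the first task is $\operatorname{cd}_p(G)=2$. Since $\dim H^{2}(G,\F_p)=1\neq0$ we have $\operatorname{cd}\geq2$; for the reverse inequality I would use that $\dim H^{2}=1$ presents $G$ as a one-relator pro-$p$ group $F/\langle r^{F}\rangle$ and that nondegeneracy of the cup product prevents $r$ from being a proper $p$-th power modulo higher terms, so that one-relator pro-$p$ theory yields $\operatorname{cd}=2$ (alternatively this is among the algebraic properties of Section~2 and may be quoted). The dualizing module then gives $\dim(I[p])^{G}=1$. The decisive and hardest step is to upgrade this to $\dim_{\F_p}I[p]=1$: assuming $\dim I[p]\geq2$, the directions of $I[p]$ invisible to $(I[p])^{G}$ should, through the same degree-one duality pairing, yield a nonzero class in $H^{1}(G,\F_p)$ whose cup product with every class of $H^{1}(G,\F_p)$ vanishes, contradicting nondegeneracy. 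Making this orthogonality argument rigorous — controlling the $G$-action on $I[p]$ and the compatibility of the cup product with the dualizing pairing when $H^{1}$ is infinite-dimensional, so that the pairing is between a discrete space and its full Pontryagin dual — is the main obstacle, and the point on which I would concentrate the detailed computation.
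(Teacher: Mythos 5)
Your reduction of condition (2) and your derivation of the first Demushkin axiom ($\dim H^2(G,\F_p)=\dim (I[p])^G=1$, using that a pro-$p$ group acts trivially on $I[p]$) are fine and agree with the paper. The genuine gap is your treatment of nondegeneracy in (2)$\Rightarrow$(1). You propose to prove that the degree-one pairing is \emph{perfect} by showing $G$ is a duality group of dimension $2$ (vanishing of $D^1(G)$), and to deduce nondegeneracy from that. This cannot work: the theorem is stated for pro-$p$ groups of arbitrary rank, and when $\dim H^1(G,\F_p)=\mu$ is infinite no shape of $I$ can rescue degree-one duality. Duality would force $H^1(G,\F_p)^*\cong H^1(G,\operatorname{Hom}(\F_p,I))$; the right side equals $H^1(G,\F_p)$ (trivial action on $I[p]$), of dimension $\mu$, while by Erd\H{o}s--Kaplansky the left side has dimension $p^\mu>\mu$. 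This is precisely why the paper remarks in its introduction that Demushkin groups of infinite rank are \emph{not} Poincar\'e duality groups: for them $D^1(G)\neq 0$ and perfectness genuinely fails, so you would be trying to prove a false statement. What must be proved is only the strictly weaker \emph{injectivity} of $H^1(G,\F_p)\to H^1(G,\F_p)^*$, and that needs a different mechanism. The paper's route: from $H^2(G,A)^*\cong\operatorname{Hom}_G(A,{}_pI)\cong\operatorname{Hom}_G(A,\Q_p/\Z_p)\cong H^0(G,A^*)$ for finite $A$ killed by $p$, deduce that the cup product induces an isomorphism $H^0(G,A)\to H^2(G,A^*)^*$ for \emph{every discrete} $G$-module $A$ killed by $p$ (write $A$ as a direct limit of finite submodules), then dimension-shift along $0\to\F_p\to\operatorname{Ind}^G(\F_p)\to A\to 0$: comparing the cohomology sequence of this sequence with the Pontryagin dual of the sequence for the dualized coefficients, the two degree-zero isomorphisms force injectivity of $H^1(G,\F_p)\to H^1(G,\F_p^*)^*$ by a diagram chase, the acyclicity of the induced module supplying the needed zero term.

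The same defect undermines your plan for (1)$\Rightarrow$(2): the orthogonality argument you sketch for upgrading $\dim(I[p])^G=1$ to $\dim I[p]=1$ again appeals to ``the same degree-one duality pairing,'' which does not exist in infinite rank; moreover this is not where the work lies. The paper settles (1)$\Rightarrow$(2) by quoting Labute's original argument: the dualizing module is assembled as a direct limit over open subgroups $U$ of duals of $H^2(U,\cdot)$, and by Corollary \ref{open subgroups} every open subgroup of a Demushkin group of arbitrary rank is again Demushkin, so each term contributes a cyclic $p$-group and ${}_pI\cong\F_p$ follows with no cup-product orthogonality needed. In short, your proposal is essentially the classical proof of the finitely generated case, which the paper already cites as \cite[Theorem 3.7.2]{neukirch2013cohomology}; the step that carries all the new content of this theorem --- obtaining nondegeneracy without perfectness when the rank is infinite --- is missing and cannot be filled by the duality-group formalism.
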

	\begin{thm}
		Let $G$ be a pro-$p$ one relator group of arbitrary rank. Then $G$ is a Demushkin group if and only if, every open subgroup $U\leq G$ of index $p$ is 1-related.
	\end{thm}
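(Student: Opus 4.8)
The plan is to prove the two implications by different routes: the forward one from the dualizing-module characterization of the preceding theorem, and the converse by a direct analysis of the relation module of an index-$p$ subgroup. Write $G=F/\langle r\rangle^{F}$ with $F$ free pro-$p$ on a minimal generating set, so that $H^{1}(G)=H^{1}(F)$, $\dim H^{2}(G)=1$, and ``$1$-related'' means $\dim H^{2}(U)=1$. The first thing I would record is a finiteness fact that makes the uncountable-rank case no harder than the classical one. For $0\neq\chi\in H^{1}(G)$ put $U=\ker\chi$, let $E\leq F$ be its preimage (free pro-$p$ by Nielsen--Schreier) and $R=\langle r\rangle^{F}$, so that $U=E/R$. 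The relation module $\bar R:=R/R^{p}[R,E]$ is a module over $\F_p[G/U]\cong\F_p[C_p]\cong\F_p[t]/(t^{p})$, and since $R$ is the normal closure of the single element $r$, the module $\bar R$ is \emph{cyclic}, generated by the image of $r$; hence $\dim_{\F_p}\bar R\leq p$. The five-term exact sequence then gives $\dim H^{2}(U)=\dim\ker\!\big(\bar R\to H_{1}(E,\F_p)\big)\leq p$, a quantity that is finite and insensitive to the (possibly uncountable) rank of $G$.

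For the forward implication, suppose $G$ is Demushkin. By the dualizing-module characterization just proved, $\operatorname{cd}(G)=2$ and the dualizing module $I=I_{G}$ has a unique subgroup of order $p$. For an open subgroup $U$ one has $\operatorname{cd}(U)=2$ and, as the dualizing module of an open subgroup is the restriction of $I_{G}$, the underlying abelian group $I_{U}=I_{G}$ again has a unique subgroup of order $p$; by the same characterization $U$ is Demushkin, so $\dim H^{2}(U)=1$. In particular every index-$p$ subgroup is $1$-related.

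For the converse I would argue by contraposition: assuming $G$ is not Demushkin, I produce an index-$p$ subgroup $U$ with $\dim H^{2}(U)\geq 2$. Since $\dim H^{2}(G)=1$ already holds, failure of the Demushkin property means exactly that the cup-product form $B\colon H^{1}(G)\times H^{1}(G)\to H^{2}(G)$ is degenerate, so its radical contains a nonzero class. The point is to convert degeneracy into a \emph{combinatorial} statement about $r$: after a Nielsen change of basis of $F$ adapted to the radical of $B$ (the normal form for one-relator pro-$p$ relators due to Labute), a suitable radical class $\chi$ becomes dual to a generator $x$ that does not occur in $r$. For such $\chi$ the $p$ translates ${}^{x^{i}}r$ ($0\le i<p$) lie in distinct Schreier sheets of $E$ and are therefore $\F_p[C_p]$-independent, so $\bar R\cong\F_p[C_p]$ is free and $\dim H^{2}(U)=p\geq 2$. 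Hence $U=\ker\chi$ is not $1$-related.

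The main obstacle is the normal-form step in the converse. It is \emph{not} true that an arbitrary radical vector $\chi$ yields a non-$1$-related $\ker\chi$: the self-pairing $\chi\cup\chi$ (the Bockstein, relevant when $p=2$) and the $q(G)$-part of the relator both interfere, and one can exhibit degenerate one-relator groups (for instance $C_{4}$ amalgamated with $\Z_{2}$ as a free pro-$2$ product, with relator $x_{1}^{4}$) for which some radical directions still give $1$-related index-$p$ subgroups. What must be shown is that degeneracy of $B$ forces $r$, up to an automorphism of $F$, to be free of some generator; this is where the classification of the relator enters, and it is here that the $p=2$ quadratic refinement and the torsion ($q$-power) case require separate, careful treatment. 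Once the correct witness $\chi$ is in hand, the cyclic-module computation of the first paragraph closes the argument uniformly in the rank of $G$.
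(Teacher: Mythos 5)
Your forward implication is sound, and it takes a slightly different route from the paper's: the paper simply quotes Corollary \ref{open subgroups} (every open subgroup of a Demushkin group is Demushkin, obtained from the inverse-limit theorem), whereas you combine the dualizing-module characterization with the standard facts that $\operatorname{cd}(U)=\operatorname{cd}(G)$ for $U$ open and that the dualizing module of $U$ is the restriction of that of $G$. Your five-term-sequence bookkeeping in the first paragraph, including the identity $\dim H^2(U)=\dim\ker\bigl(\bar{R}\to H_1(E,\F_p)\bigr)$, is also correct.

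The converse, however, has a genuine gap, and it is not merely the unproved step you yourself flag: the normal form you ask for does not exist. You need the relator $r$, after an automorphism of $F$, to \emph{literally} omit a generator $x$ dual to a radical class; only then do the translates ${}^{x^i}r$ lie in distinct Schreier sheets, which is what makes $\bar{R}$ free \emph{and} (what your conclusion also silently uses) makes the map $\bar{R}\to H_1(E,\F_p)$ vanish, giving $\dim H^2(U)=p$. Take $p$ odd and $r=x_1^p[x_2,x_3]$ (append a symplectic tail $[x_4,x_5]\cdots$ to reach any prescribed rank). The radical of the cup product is the line $\F_p\chi_1$, so $G$ is not Demushkin; but all $p$ conjugates $r^{x_1^i}$ are congruent to $x_1^p\not\equiv 1$ modulo $\Phi(N)$, where $N$ is the preimage of $\ker\chi_1$, so $\bar{R}\to H_1(N,\F_p)$ has rank $1$ and $\dim H^2(\ker\chi_1)=p-1$, not $p$ --- this is exactly the case $r\notin\Phi(N)$, $1\leq s\leq p-2$, in the paper's proof. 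Consequently $G$ cannot be isomorphic to \emph{any} one-relator group whose relator omits a generator: an isomorphism preserves the one-dimensional radical, hence carries $\ker\chi_1$ onto the kernel of the omitted generator's dual, and your own sheet computation would then force $\dim H^2(\ker\chi_1)=p$, a contradiction. So the reduction on which your converse rests fails for the whole family of non-Demushkin one-relator groups with $q(G)=p$ and the torsion class in the radical; it is false, not just unproven. The paper avoids this by demanding far less of the normal form --- only that $y$ be absent from the commutator part of $r$ modulo $[[F,F],F]$, higher-order occurrences of $y$ being allowed --- and then running the relation-module analysis of \cite{dummit1983demuvskin} on the finitely generated quotients $G_J$, with a stabilization argument ($s=\max_J s_J$) to pass to arbitrary rank, which yields the weaker but sufficient bound $\dim H^2(U)=p-s\geq 2$ rather than the exact value $p$ your argument would require.
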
 
	In Section 2 we discuss the realization of Demushkin groups of uncountable rank as maximal pro-$p$ quotients of absolute Galois groups. This work is motivated by the wide version of Inverse Galois Problem, which asks to determine which profinite groups can be realized as absolute Galois groups, and its restricted pro-$p$ version, which asks to determine which pro-$p$ groups can be realized as maximal pro-$p$ quotients of absolute Galois groups. In particular, we prove the following:
	\begin{thm}
		For every cardinal $\mu>\aleph_0$, prime $p$, and $q$ a power of $p$ or $0$, there exists a field $F$ whose absolute Galois group is a Demushkin group $G$ of rank $\mu$, $q(G)=q$ and $s(G)=0$.
	\end{thm}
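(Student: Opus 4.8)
The plan is to avoid constructing the field directly and instead to reduce the realization problem to the cohomological characterization of Demushkin groups proved above (the equivalence ``$G$ is Demushkin $\iff \operatorname{cd}(G)=2$ and the dualizing module of $G$ has a unique subgroup of order $p$''). It therefore suffices to produce a field $F$ for which $G_F$ is pro-$p$, $\operatorname{cd}(G_F)=2$, and the dualizing module of $G_F$ is isomorphic to $\Q_p/\Z_p=\mu_{p^\infty}$: since $\Q_p/\Z_p$ has a unique additive subgroup of order $p$, the criterion immediately gives that $G_F$ is Demushkin, while a dualizing module isomorphic to $\Q_p/\Z_p$ is exactly the condition $s(G_F)=0$, and nondegeneracy of the cup product comes for free as it is built into the criterion. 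Thus the whole problem reduces to realizing the data $\operatorname{cd}=2$, dualizing module $\mu_{p^\infty}$, prescribed $q$, and $\dim_{\F_p}H^1(G_F)=\mu$.

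To force $G_F$ to be pro-$p$ I would start from a field $K$ whose absolute Galois group $G_K$ has the right $p$-cohomology and take $F$ to be the fixed field of a pro-$p$ Sylow subgroup $P\le G_K$. Then $G_F=P$ is pro-$p$ (so $F$ has no nontrivial extension of degree prime to $p$), and $\operatorname{cd}_p(P)=\operatorname{cd}_p(G_K)$, with the dualizing module of $P$ inherited from that of $G_K$. The natural base $K$ is a local-duality field, i.e. one with $\operatorname{cd}_p(G_K)=2$ and dualizing module $\mu_{p^\infty}$, the prototype being a finite extension of $\Q_p$. Since the maximal prime-to-$p$ extension already contains $\zeta_p$, the Sylow fixed field $F$ contains $\mu_p$, so $P$ acts trivially on $\mu_p$ and duality forces $\dim H^2(P)=1$; choosing the base to contain exactly $\zeta_{p^a}$ (respectively all of $\mu_{p^\infty}$) pins down $q(G_F)=p^a$ (respectively $q=0$). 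The $q=0$ case demands extra care, since one must begin from a cyclotomically saturated base and recheck that $\operatorname{cd}_p=2$ and the dualizing module survive the passage to the full $\mu_{p^\infty}$.

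The remaining, and genuinely hard, step is to make $\dim_{\F_p}F^\times/(F^\times)^p$ equal to the prescribed cardinal $\mu$ --- possibly larger than $2^{\aleph_0}$ --- while keeping $\dim H^2=1$. The naive moves all fail: enlarging a field to carry $\mu$ independent square classes through extra valuations or transcendentals inflates the Brauer group and destroys the single-relator structure. The resolution I would pursue is to enlarge the base $K$ through an elementary extension of cardinality $\mu$ (an ultrapower, or a saturated model of the first-order theory of the chosen local field). By Ax--Kochen--Ershov and the Prestel--Roquette description of $p$-adically closed fields, such an extension is elementarily equivalent to the local base and hence retains $\operatorname{cd}_p=2$, the dualizing module $\mu_{p^\infty}$, and the prescribed roots of unity, whereas $\dim_{\F_p}K^\times/(K^\times)^p$ can be pushed up to $\mu$. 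Passing to the pro-$p$ Sylow fixed field $F$ of this large base then yields $\dim H^1(G_F)=\mu$ with $\dim H^2(G_F)=1$ preserved by duality, and the cd-characterization certifies that $G_F$ is a Demushkin group of rank $\mu$ with $q(G)=q$ and $s(G)=0$.

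The principal obstacle is the simultaneous control of cardinality and of $\dim H^2=1$: one must verify that the cohomological dimension, the dualizing module, and the prescribed Galois action on $\mu_{p^\infty}$ all transfer to the large elementary extension and descend correctly to the Sylow subgroup. I expect the bookkeeping in the $q=0$ case, and the exact computation of $\dim_{\F_p}K^\times/(K^\times)^p$ for the enlarged base so as to hit $\mu$ precisely rather than merely bound it, to require the most work.
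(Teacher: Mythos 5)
Your reduction to the cohomological criterion and the passage to a Sylow fixed field are reasonable in principle, but the central step of your construction --- producing the rank $\mu$ by an elementary extension of a local field --- cannot work, and this is fatal. For a finite extension $k_0$ of $\Q_p$ the group $k_0^{\times}/(k_0^{\times})^p$ is \emph{finite} (of order $p^{[k_0:\Q_p]+2}$ when $\zeta_p\in k_0$), and for each fixed finite $m$ the assertion ``$K^{\times}/(K^{\times})^p$ has exactly $m$ elements'' is a first-order sentence in the language of rings. Hence every field $K$ elementarily equivalent to $k_0$ --- in particular every ultrapower and every saturated model of its theory, regardless of cardinality --- satisfies $|K^{\times}/(K^{\times})^p|=|k_0^{\times}/(k_0^{\times})^p|$. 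So $\dim_{\F_p}K^{\times}/(K^{\times})^p$ is not ``pushed up to $\mu$''; it does not move at all, and by Kummer theory $G_K(p)$ stays finitely generated. Passing to the Sylow fixed field does not rescue this: the same first-order argument fixes the finite groups $K^{\times}/(K^{\times})^n$ for every $n$, and since prime-to-$p$ extensions of the henselian field $K$ are tame, $K$ has only finitely many extensions of each prime-to-$p$ degree; the Sylow fixed field $F$ is therefore a union of countably many finite subextensions $K_U$, so that $F^{\times}/(F^{\times})^p=\varinjlim K_U^{\times}/(K_U^{\times})^p$ is countable. Your construction thus lands exactly on Labute's rank-$\aleph_0$ Demushkin group for every choice of $\mu$, and never attains uncountable rank. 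The obstruction is intrinsic: in the finite regime the Kummer rank is an elementary invariant, so no model-theoretic enlargement of a local field can create new $p$-th power classes.

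This is precisely the tension you flagged yourself: to obtain $\mu$ independent $p$-classes one must genuinely enlarge the field (the paper adjoins $\mu$ transcendentals to $\Q(\mu_q)$, resp.\ to $\Q(\mu_{p^\infty})(x,y)$ when $q=0$), and this unavoidably inflates ${}_p\Br$; the real work consists of repairing that damage rather than avoiding it. Concretely, the paper fixes one nontrivial class $A\in{}_p\Br(K)$ and runs a transfinite Minac--Ware-type tower of generic splitting fields which kills every other $p$-torsion Brauer class while preserving the $p$-classes of the base (the condition $F^p\cap K=K^p$ keeps the rank equal to $\mu$) and the roots of unity (which pins down $q(G)$ via properties $P$ and $Q$), interleaved with passages to Sylow fixed fields to make the Galois group pro-$p$; the union then has ${}_p\Br(F)=\langle A_F\rangle\cong\Z/p\Z$ and nondegenerate cup product, and $s(G)=0$ follows from property $P$ of the cyclotomic character. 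Any repair of your outline would have to replace the elementary-extension step by a transcendence-plus-splitting-field construction of this kind.
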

	For the case $p=2$ case we need notation for the subgroups of $\mathbb Z_2^ \times$. For an integer $f \geq 2$ define $U_2^{(f)}$, respectively $U_2^{[f]}$, to be the closed subgroups generated by $1+2^f$, respectively $-1+2^f$, in $\mathbb Z_2^\times$. We have the following result.
	
	\begin{thm}
		Let $G$ be a pro-2 Demushkin group. If $G$ is an absolute Galois group, then $t(G)\ne 0$. In addition, for every cardinal $\mu$, there exist an absolute Galois group over a field of characteristic 0 which is a pro-2 Demushkin group $G$ of rank $\mu$ for every pair of invariants in the following list, and only for such invariants.
		\begin{itemize}
			\item $t(G)=1$ and  $\operatorname{Im}(\chi) \in \{ U_2^{(f)},U_2^{[f]}, f \in \mathbb N, f \geq 2\}$,
			
			\item $t(G) \in \{-1,1\}$,$ \operatorname{Im}(\chi)=\{\pm 1\} U_2^{(f)}, f \in \mathbb N, f \geq 2$.
		\end{itemize}
		
		If $F$ is a field of characteristic $p$, whose absolute Galois group is a pro-2 Demushkin group $G$, then $t(G)=1$ and exactly the following options of $\operatorname{Im}(\chi)$ are possible:
		\begin{itemize}
			\item  If $p\equiv 1 (mod4)$, say $p = 1 + 2^ac, a > 2, 2\nmid c$, then $\operatorname{Im}(\chi)=U_2^{(b)}$ for some  $a \leq b < \infty$.	 Moreover, each group
			$U_2^{(b)}, a \leq b < \infty$, occurs as $\operatorname{Im}(\chi)$ for some Demushkin group of rank $\mu$ $G_F$ with $\operatorname{char}(F)=p$. 
			\item If $p\equiv -1 (mod4)$, say $p = -1 + 2^ac, a > 2, 2\nmid c$, then the only possibilities for $\operatorname{Im}(\chi)$
			are the groups $U_2^{[a]}$, and  $U_2^{(b)}$, with $a+1 \leq b < \infty$.	 Moreover, each of these groups can occur as $\operatorname{Im}(\chi)$ for some Demushkin group $G_F$ of rank $\mu$ with $\operatorname{char}(F)=p$.
		\end{itemize} 
	\end{thm}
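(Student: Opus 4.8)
The plan is to identify $\operatorname{Im}(\chi)$ with the image of the $2$-adic cyclotomic character, and thereby reduce the whole statement to a question about closed subgroups of $\Z_2^\times$, handled by the countable-rank theory of Mináč and Ware together with the rank-inflation device already used in the realization theorem for $q\ne 2$. First I would fix the interpretation of the invariants: when $G=G_F$ is realized as an absolute Galois group of cohomological dimension $2$, the dualizing module is $I\cong\mu_{2^\infty}(\bar F)\cong\Q_2/\Z_2$ and $\chi\colon G\to I^\times=\Z_2^\times$ is precisely the cyclotomic character $\chi_{\mathrm{cyc}}$, so $\operatorname{Im}(\chi)=\operatorname{Im}(\chi_{\mathrm{cyc}})$. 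The three admissible shapes $U_2^{(f)},U_2^{[f]},\{\pm1\}U_2^{(f)}$ then correspond respectively to $F\ni i$ (image inside $1+4\Z_2$), to the ``$-1+2^f$'' type, and to the presence of a genuine $-1$ in the image, which is the only case where both $t=\pm1$ can occur.

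For the necessity assertions I would use that $G_F$ is a pro-$2$ absolute Galois group: by Artin--Schreier every finite subgroup of $G_F$ has order $1$ or $2$, and this involution structure (together with $s(G)=0$, already forced for any $2$-adic realization) is exactly what excludes $t(G)=0$. To make this rigorous at rank $\mu$ I would pass to a closed subgroup of countable rank which is itself an absolute Galois group and carries the same $(t,\operatorname{Im}(\chi))$, and then invoke the Mináč--Ware classification \cite{minavc1991demuvskin,minavc1992pro} to read off both $t(G)\ne0$ and the admissible list of pairs $(t(G),\operatorname{Im}(\chi))$. The same reduction yields the positive-characteristic restriction $t(G)=1$ once the characteristic-$p$ computation below pins down $\operatorname{Im}(\chi)$.

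For the realizations in characteristic $0$ I would start from a field whose absolute Galois group is a rank-$2$ Demushkin pro-$2$ group realizing the prescribed cyclotomic image --- for instance an appropriate $2$-closed subfield of a finite extension of $\Q_2$ --- chosen so that $(t,\operatorname{Im}(\chi))$ takes each admissible value. I would then inflate the rank from $\aleph_0$ to $\mu$ by the same construction used to prove the realization theorem for $q\ne 2$ earlier in the paper, checking that it preserves $\dim H^2=1$, the nondegeneracy of the cup product, and --- the point specific to $p=2$ --- the cyclotomic character, so that $(t(G),\operatorname{Im}(\chi))$ is unchanged. In characteristic $p>2$ the computation is purely $2$-adic: $\chi_{\mathrm{cyc}}$ factors through the Frobenius of the constant field, so over $\F_{p^m}$ one has $\operatorname{Im}(\chi)=\overline{\langle p^{m}\rangle}\subseteq\Z_2^\times$. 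Writing $a=v_2(p-1)$ when $p\equiv1\pmod4$, a lifting-the-exponent computation gives $\overline{\langle p^{m}\rangle}=U_2^{(a+v_2(m))}$, hence exactly the groups $U_2^{(b)}$, $b\ge a$; writing $a=v_2(p+1)$ when $p\equiv-1\pmod4$ gives $U_2^{[a]}$ for odd $m$ and $U_2^{(b)}$, $b\ge a+1$, for even $m$. Choosing $m$ to realize each target and inflating the rank to $\mu$ over $\F_{p^m}$ as before yields the required fields, with $t(G)=1$ forced by the split ``inertia-by-Frobenius'' structure of the group.

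I expect the main obstacle to be the rank-inflation step with exact control of the invariants: one must produce an honest field of \emph{uncountable} absolute-Galois rank whose group is still Demushkin --- so $\dim H^2$ stays $1$ and the cup product stays nondegenerate --- while simultaneously fixing the cyclotomic image in $\Z_2^\times$. The naive ways of creating new square classes (iterated Laurent series or extra independent valuations) raise the cohomological dimension and destroy the Demushkin property, so the argument must route through the transfinite construction of the earlier $q\ne2$ realization theorem and verify that it can be carried out while holding $\chi_{\mathrm{cyc}}$ and $t$ fixed.
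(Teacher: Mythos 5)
Your overall architecture for the realization half --- start from base fields realizing each admissible pair at countable rank, then inflate the rank to $\mu$ by the transcendental-extension plus generic-splitting-field machinery of the $q\ne2$ theorem, checking that $F\cap\mu_{2^\infty}$ (hence $\operatorname{Im}(\chi)$) is preserved --- is exactly the paper's route (Propositions \ref{building the field}, \ref{main step} and \ref{last step} applied over the fields $K'$ of \cite{minavc1992pro}). But two steps of your proposal fail as stated. First, the necessity reduction is impossible: you propose to pass to a closed subgroup of countable rank which is itself an absolute Galois group and carries the same $(t,\operatorname{Im}(\chi))$. A closed subgroup of countable rank in a Demushkin group of rank $\mu>\aleph_0$ necessarily has infinite index, and by Corollary \ref{open subgroups} every closed subgroup of infinite index of a Demushkin group is \emph{free}; so no such Demushkin subgroup exists, and a free subgroup carries neither a nondegenerate cup product nor the invariant $t$. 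The paper handles necessity quite differently: the exclusion arguments of \cite{minavc1992pro} are arithmetic statements about the field $F$ itself (quaternion algebras, roots of unity, etc.), they do not use countability of the rank, and so they are simply quoted.

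Second, you propose to control $(t,\operatorname{Im}(\chi))$ by holding the cyclotomic character fixed through the rank inflation, but $t$ is not a function of $\operatorname{Im}(\chi)$: for $\operatorname{Im}(\chi)=\{\pm1\}U_2^{(f)}$ both $t=1$ and $t=-1$ must be realized. The actual mechanism, used in the paper following \cite{minavc1992pro}, is that $t(G_F)=1$ if the quaternion algebra $\left(\frac{-1,-1}{F}\right)$ splits and $t(G_F)=-1$ otherwise; to get $t=-1$ one must run the generic-splitting chain with the distinguished Brauer class $A=\left(\frac{-1,-1}{K}\right)$, so that ${}_2\operatorname{Br}(F)=\langle A_F\rangle\ne 1$ at the end. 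Your proposed $2$-adic starting fields make this gap concrete: for a finite extension $k$ of $\Q_2$, $\left(\frac{-1,-1}{k}\right)$ splits if and only if $[k:\Q_2]$ is even, while $\operatorname{Im}(\chi_k)=\{\pm1\}U_2^{(f)}$ forces $[k\cap\Q_2(\mu_{2^\infty}):\Q_2]=2^{f-2}$ to divide $[k:\Q_2]$; hence for $f\ge3$ every such $k$ has even degree, $\left(\frac{-1,-1}{k}\right)$ splits, it stays split in the fixed field of a $2$-Sylow subgroup, and $t=1$ is forced. So the pair $t=-1$, $\operatorname{Im}(\chi)=\{\pm1\}U_2^{(f)}$, $f\ge3$, is unreachable from $2$-adic fields, and one must instead use the case-by-case base fields $K'$ of \cite{minavc1992pro} with $\left(\frac{-1,-1}{K'}\right)\ne 1$. (Your characteristic-$p$ computation of $\overline{\langle p^m\rangle}\subseteq\Z_2^\times$ is a sound consistency check, but the realizations there again go through the same construction over the Min\'a\v{c}--Ware fields rather than through a bare Frobenius argument.)
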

	In Section 4 we compute the profinite completion of a Demushkin group of infinite rank, it turns out to be always a free pro-$p$ group. Bearing in mind that free pro-$p$ groups of any rank are absolute Galois groups, we get a new class of examples of absolute Galois groups such that their profinite completion remain absolute Galois group, as been discussed in \cite{baron-cohomological}.
	
	\section*{Theoretical background}
	Recall that $\dim(H^1(G))$ equals the cardinality of a minimal set of generators of $G$. In particular, $H^1(G)=(G/\Phi(G))^*$, and for every subset $X=\{x_i\}\subseteq G$, $X$ is a minimal set of generators for $G$ if and only if $\{x_i\mod \Phi(G)\}$ is a minimal set of generators for $G/\Phi(G)$. Hence, for every minimal set of generators $\{x_i\}$ of $G$, we can define a dual basis $\{\chi_i\}$ of $H^1(G)$ by $\chi_i(x_j)=\delta_{ij}$. Conversely, for every basis of $\{\chi_i\}$ of $H^1(G)$ one can construct a dual minimal set of generators $\{x_i\}$ for $G$, satisfying $\chi_i(x_j)=\delta_{ij}$. 
	
	As $\dim(H^2(G))$ equals the cardinality of a minimal set of relations defining $G$, $H^2(G)\cong \F_p$ if and only if $G$ is 1-relator, i.e, $G\cong F/\langle \langle r \rangle \rangle$ is a quotient of a free pro-$p$ group with the same rank, by a normal subgroup generated by one element $r\in \Phi(G)$. For 1-relator pro-$p$ groups, the cup product bilinear form $H^1(G)\times H^1(G)\to H^2(G)\cong \F_p$ has the following interpretation: choose a basis $\{x_i\}$ for $F$. Since $r\in \Phi(G)$, $r\equiv r'\mod [[F,F],F]$ where $r'=\prod x_i^{p\alpha_i}\prod_{i<j}[x_i,x_j]^{\beta_{i,j}}$. Take $\{\chi_i\}$ to be the dual basis, then $\chi_i\cup \chi_j=\beta_{ij}\mod p$ for every $i<j$ and $\chi_\cup \chi=\binom{p}{2}\mod p$. The cup product bilinear form is skew-symmetric. 
	
	Now we give a list of results regarding nondegenerate bilinear forms. Our motivation is the cup product bilinear form and hence we will only refer to skew symmetric bilinear forms. Recall that a bilinear form $\varphi:V\times V\to \F$ on a vector space $V$ is nondegenerate if for every $v\in V$ there exits $u\in V$ such that $\varphi(v,u)\ne 0$. In addition, over a field with character different then 2, every skew symmetric bilinear form is \textit{alternate}, meaning that for every $v\in V$, $\varphi(v,v)=0$. It is a well known fact that  every finite dimensional vector space with a nondegenerate alternate bilinear form has a \textit{symplectic basis}, meaning a basis of the form $\{x_1,...,x_{2n}\}$ which satisfies $\varphi(x_{2i-1},x_{2_i})=1$ and for all other pairs $i\leq i, \varphi(x_i,x_j)=0$. Hence up to isometry there is only one alternate bilinear form on a even dimensional vector space $V$ over a field of characteristic different than 2. The characteristic 2 case is a bit more complicated, but still on each finite dimensional vector space $V$ there are only finitely many pairwise nonisometric nondegenerate skew symmetric forms. As for the infinite dimension case, we have the following:
	\begin{lem}\label{Projections}
		Let $V$ be a vector space a some field $F$, equipped with a skew-symmetric bilinear form $\varphi:V\times V\to F$, and let $U=\Span \{v_1,...,v_n\}$ be a finite dimensional subspace. Then $V=U\oplus U^{\perp}$ where $U^{\perp}$ denotes the orthogonal complement of $U$.
	\end{lem}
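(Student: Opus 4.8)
The statement as written needs the restriction $\varphi|_U$ to be nondegenerate: if, for instance, $\varphi$ is symplectic on a two-dimensional $V$ and $U$ is the line spanned by one of the two basis vectors, then $U\subseteq U^{\perp}$ and the sum is certainly not direct. I would therefore run the argument under the hypothesis that $\varphi|_U$ is nondegenerate, which is the situation actually occurring here, since $\varphi$ is nondegenerate on $V$ and the lemma is meant to be applied to symplectic (hence nondegenerate) finite-dimensional subspaces $U$ while assembling a symplectic basis. The plan is then to verify the two defining conditions of an internal direct sum separately, namely $U\cap U^{\perp}=0$ and $U+U^{\perp}=V$.

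First I would check $U\cap U^{\perp}=0$. If $u\in U\cap U^{\perp}$, then $\varphi(u,u')=0$ for every $u'\in U$, so $u$ lies in the radical of $\varphi|_U$; nondegeneracy of $\varphi|_U$ forces $u=0$. For the surjectivity part $U+U^{\perp}=V$ I would exploit the finiteness $\dim U=n$ in an essential way. Consider the linear map $\Psi\colon V\to U^{*}$ defined by $\Psi(v)(u)=\varphi(u,v)$ for $u\in U$; its kernel is exactly $U^{\perp}$. Nondegeneracy of $\varphi|_U$ says the map $U\to U^{*}$, $u'\mapsto \varphi(\,\cdot\,,u')|_U$, is injective, and because $\dim U=\dim U^{*}=n<\infty$ it is in fact an isomorphism. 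Hence $\Psi$ restricted to $U$ already surjects onto $U^{*}$, so $\Psi$ is surjective. Given an arbitrary $v\in V$, pick $u_0\in U$ with $\Psi(u_0)=\Psi(v)$; then $\varphi(u,v-u_0)=0$ for all $u\in U$, i.e. $v-u_0\in U^{\perp}$, and $v=u_0+(v-u_0)\in U+U^{\perp}$. Combining the two parts yields $V=U\oplus U^{\perp}$.

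The only genuine obstacle is the nondegeneracy of the restriction $\varphi|_U$: it is what makes $U\cap U^{\perp}=0$ hold and what makes $U\to U^{*}$ injective, and the finite-dimensionality of $U$ is precisely what upgrades that injectivity to an isomorphism and thereby gives surjectivity of $\Psi$. This is also the reason the statement is specific to finite-dimensional $U$ and cannot be expected for arbitrary subspaces, since for infinite-dimensional $U$ the natural map $U\to U^{*}$ need no longer be onto.
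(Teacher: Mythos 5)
Your proof is correct and takes essentially the same route as the paper: the paper finds, for each $v\in V$, scalars $\alpha_i$ with $v-\sum_i\alpha_i v_i\in U^{\perp}$ by solving the linear system $\sum_i \alpha_i\varphi(v_i,v_j)=\varphi(v,v_j)$, whose solvability is exactly the invertibility of the Gram matrix $(\varphi(v_i,v_j))$ --- which is your isomorphism $U\to U^{*}$, $u'\mapsto\varphi(\,\cdot\,,u')|_U$, written in coordinates. You are also right to flag the tacit hypothesis: the paper's own proof invokes ``the nondegeneracy of $\varphi$ on $U$'' even though it is absent from the statement (the lemma is only ever applied to nondegenerate $U$, e.g.\ in Proposition \ref{locally nondegenerate}), and your explicit verification that $U\cap U^{\perp}=0$ supplies the directness of the sum, a step the paper leaves implicit.
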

	\begin{proof}
		It is enough to prove that for every $v\in V$ there are some scalars $\alpha_1,...,\alpha_n\in F$ such that $\varphi(v-\sum \alpha_iv_i,v_j)=0$ for every $j=1,...,n$. This is equivalent to solve the equation system $\sum \alpha_i\varphi(v_i,v_j)=\varphi(v,v_j)$ which follows immediately by the nondegeneracy of $\varphi$ on $U$, which is equivalent to the matrix $(\varphi(v_i,v_j))$ being invertible. Later on we will denote the element in $u\in U$ for which $v-u\in U^{\perp}$ by the projection of $v$ on $U$.   
	\end{proof}
	As a result, we get the following useful proposition:
	\begin{prop}\label{locally nondegenerate}
		A skew-symmetric bilinear form over an infinite dimensional vector space is nondegenerate if and only if it is locally nondegenerate.
	\end{prop}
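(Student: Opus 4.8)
The plan is to first unwind the definition of \emph{locally nondegenerate}: a skew-symmetric form $\varphi$ on $V$ is locally nondegenerate if every finite-dimensional subspace $U\subseteq V$ is contained in some finite-dimensional subspace $W$ on which $\varphi$ restricts to a nondegenerate form. With this reading one implication is immediate. If $\varphi$ is locally nondegenerate and $0\neq v\in V$, then $\langle v\rangle$ sits inside a finite-dimensional $W$ with $\varphi|_W$ nondegenerate, so some $u\in W$ satisfies $\varphi(v,u)\neq 0$; hence $\varphi$ is nondegenerate. So the content lies in the converse.

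For the converse, assume $\varphi$ is nondegenerate and fix a finite-dimensional $U$. I want to enlarge $U$ to a finite-dimensional $W$ with $\varphi|_W$ nondegenerate. I would measure the defect by the radical $\operatorname{rad}(U)=U\cap U^{\perp}=\{u\in U:\varphi(u,U)=0\}$ and argue by induction on $\dim\operatorname{rad}(U)$. If $\operatorname{rad}(U)=0$ there is nothing to do, take $W=U$. Otherwise I reduce the radical by one dimension while enlarging $U$ by one dimension, and iterate; since $\dim\operatorname{rad}(U)$ is finite this terminates in a nondegenerate finite-dimensional $W\supseteq U$.

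The key reduction step is as follows. Pick $0\neq w\in\operatorname{rad}(U)$. Since $\varphi$ is nondegenerate on $V$, there is $z\in V$ with $\varphi(w,z)\neq 0$, rescaled so that $\varphi(w,z)=1$. Because $\varphi(w,U)=0$ but $\varphi(w,z)=1$, the vector $z$ lies outside $U$, so $U'=U\oplus\langle z\rangle$ has dimension $\dim U+1$. The heart of the argument is the claim $\dim\operatorname{rad}(U')=\dim\operatorname{rad}(U)-1$: writing an element of $\operatorname{rad}(U')$ as $u+\lambda z$ with $u\in U$ and testing against $w\in U'$ gives $\varphi(u+\lambda z,w)=-\lambda$, forcing $\lambda=0$; hence $\operatorname{rad}(U')=\{u\in\operatorname{rad}(U):\varphi(u,z)=0\}$ is the kernel of the functional $u\mapsto\varphi(u,z)$ on $\operatorname{rad}(U)$, which is nonzero there since $\varphi(w,z)=1$, so it has codimension one.

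The main obstacle is precisely this last linear-algebra computation: one must check that passing from $U$ to $U'$ creates no new radical directions, i.e. that every radical element of $U'$ already lies in $\operatorname{rad}(U)$ (this is exactly the vanishing of $\lambda$), and that the partner $z$ genuinely kills the chosen radical vector $w$ without being absorbed back into $U$. Once the radical is shown to drop by exactly one at each step, the induction closes and both implications are established, proving Proposition \ref{locally nondegenerate}. In particular, any nondegenerate $\varphi$ then satisfies the hypothesis of Lemma \ref{Projections} on a cofinal family of finite-dimensional subspaces, which is what the later constructions require.
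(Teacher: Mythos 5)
Your proof is correct, but it follows a genuinely different route from the paper's. You induct on $\dim\operatorname{rad}(U)$, at each step adjoining a hyperbolic partner $z$ for a chosen radical vector $w$ and verifying that the radical drops by exactly one dimension; the computation forcing $\lambda=0$ (so that no new radical directions appear) and the identification of $\operatorname{rad}(U')$ as the kernel of a nonzero functional on $\operatorname{rad}(U)$ are exactly the points that need checking, and you check them correctly. The paper instead inducts on the number of vectors in a finite subset $A$: it splits off a nondegenerate subspace $V'$ of dimension $1$ or $2$ containing the first vector, invokes Lemma \ref{Projections} to write $V = V'\oplus V'^{\perp}$, projects the remaining vectors into $V'^{\perp}$ (which inherits nondegeneracy from $V$), and applies the inductive hypothesis inside $V'^{\perp}$. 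Your argument has the advantage of not needing Lemma \ref{Projections} at all, of producing a minimal enlargement ($\dim W = \dim U + \dim\operatorname{rad}(U)$), and of working verbatim for nonalternate skew-symmetric forms in characteristic $2$, since only skew-symmetry is used; the paper's argument buys consistency with its surrounding toolkit, since the projection decomposition is reused later (e.g.\ in Proposition \ref{group for every form}). The easy direction is handled the same way in both.
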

	\begin{proof}
		The second direction is trivial, so we only prove the first direction.
		
		Let $V$ be an infinite dimensional vector space over some field $F$, equipped with a nondegenerate bilinear form $\varphi:V\times V\to F$. We want to prove that every finite subset $A\subseteq V$ is contained in some finite dimensional nondegenerate subspace $U$. We prove it by induction on the size of $A$. When $A=\{v\}$, then either $\varphi(v,v)\not =0$ and we can take
		$U=\operatorname{span}(v)$, or there exist some $u\in V$ such that $\varphi(u,v)\not =0$, and then we can take $U=\Span \{v,u\}$. Now let $A=\{v_1,...,v_n\}$.  If $\varphi(v_1,v_1)\ne 0$, denote $V'= \Span \{v_1\}$. Otherwise, we might add an element $v_{n+1}$ such that, after renamimg, $\varphi(v_1,v_2)\ne 0$. In that case, denote  $V'=\Span\{v_1,v_2\}$. Notice that $V'$ is nondegenerate, and hence $V=V'\oplus V'^{\perp}$.  $v_i-{\proj}_{V'}(v_j)\in V'^{\perp}$ for all $v_i\in A\setminus V'$. Since $V$ is nondegenerate, so is $V'^{\perp}$. Hence we can apply the induction hypothesis on $\{v_i-{\proj}_{V'}(v_j)\}_{v_i\in A\setminus V'}$ inside $V'^{\perp}$ to get some  nondegenerate finite dimensional subspace $W\subseteq V'^{\perp}$ which contains $\{v_i-{\proj}_{V'}(v_j)\}_{v_i\in A\setminus V'}$ inside $V'^{\perp}$. Taking $U=V'\oplus W$, we are done.
	\end{proof}
	Lemma \ref{Projections} implies the next result, which was first stated in \cite{kaplansky1950forms}:
	\begin{lem}
		Let $\varphi$ be an alternate nondegenerate bilinear form over a vector space $V$ of countable dimsension. Then $V$ has a symplectic basis. Henceforth, there is only one alternate nondegenerate bilinear form of infinite countable dimension over a given field.
	\end{lem}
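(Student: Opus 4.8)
The plan is to construct a symplectic basis by an exhaustion argument, building an increasing chain of finite-dimensional nondegenerate subspaces whose nested symplectic bases assemble into a symplectic basis of all of $V$. Since $\dim V=\aleph_0$, I first fix an enumeration $v_1,v_2,\ldots$ of a basis of $V$; the goal is to produce subspaces $U_1\subseteq U_2\subseteq\cdots$, each finite dimensional with $\varphi|_{U_n}$ nondegenerate, so that $v_n\in U_n$ and so that the symplectic basis chosen for $U_n$ extends the one chosen for $U_{n-1}$. Granting this, $\bigcup_n U_n=V$ because the $v_n$ span $V$, and the union of the compatible symplectic bases is then a symplectic basis of $V$.

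The inductive step is where Lemma \ref{Projections} does the work. Starting from $U_{n-1}$ (taking $U_0=0$), I invoke the lemma to split $V=U_{n-1}\oplus U_{n-1}^{\perp}$ and write $v_n=u+w$ with $u\in U_{n-1}$ and $w\in U_{n-1}^{\perp}$; concretely, $u=\proj_{U_{n-1}}(v_n)$. If $w=0$ then $v_n\in U_{n-1}$ and I set $U_n=U_{n-1}$. Otherwise $w$ is a nonzero vector of $U_{n-1}^{\perp}$, which is itself nondegenerate, so there is $w'\in U_{n-1}^{\perp}$ with $\varphi(w,w')\ne 0$; after rescaling $w'$ I may assume $\varphi(w,w')=1$. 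Alternateness of $\varphi$ gives $\varphi(w,w)=\varphi(w',w')=0$, so $\{w,w'\}$ is a symplectic pair spanning a nondegenerate plane inside $U_{n-1}^{\perp}$, orthogonal to all of $U_{n-1}$. Setting $U_n=U_{n-1}\oplus\Span\{w,w'\}$ yields a nondegenerate subspace whose symplectic basis is that of $U_{n-1}$ together with $w,w'$, and $v_n=u+w\in U_n$ as required.

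Because each step only appends a new symplectic pair orthogonal to everything chosen before, the symplectic bases are genuinely nested, and their union $B$ is linearly independent (it is a basis of each $U_n$) and spans $\bigcup_n U_n=V$; hence $B$ is a symplectic basis of $V$, which is countably infinite since $V$ is infinite dimensional. For the final ``henceforth'' assertion, given two countably infinite dimensional spaces carrying alternate nondegenerate forms, each admits such a symplectic basis, both indexed by $\mathbb{N}$, and the linear isomorphism matching one basis to the other preserves the form by construction, so the two forms are isometric.

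The construction is elementary once Lemma \ref{Projections} is in hand; the only point demanding care is the bookkeeping that guarantees the union of the nested symplectic bases is a basis of all of $V$, which is why I insist that the $n$-th step absorb the $n$-th basis vector $v_n$. This is the step I expect to be the main (though modest) obstacle, since skipping it would leave the union spanning only a proper subspace.
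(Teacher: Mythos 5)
Your proof is correct and is precisely the argument the paper intends: the paper gives no proof of this lemma, stating only that it follows from Lemma \ref{Projections} (citing Kaplansky), and your exhaustion argument --- absorbing the $n$-th basis vector at the $n$-th step and appending a symplectic pair drawn from $U_{n-1}^{\perp}$ --- is the standard way to fill in exactly that implication. The one assertion you leave implicit, that $U_{n-1}^{\perp}$ inherits nondegeneracy from $V$ and $U_{n-1}$, is a one-line consequence of the orthogonal decomposition and is used at the same level of detail in the paper's own proof of Proposition \ref{locally nondegenerate}, so nothing is missing.
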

	Unlike the countable case,  when $\dim V$ is uncountable, we have the following Theorem:
	\begin{thm}\cite[Main Theorem]{hall1988number}\label{maximal number of forms}
		Let $\mu>\aleph_0$ be a cardinal. For every field $\F$ there exist $2^{\mu}$ pairwise nonisometric nondegenerate alternate bilinear forms of dimension $\mu$ over $\F$.  
	\end{thm}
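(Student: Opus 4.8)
The content of the theorem is really a lower bound: the preceding lemma shows that in countable dimension there is a \emph{unique} nondegenerate alternate form, so the uncountability of $\mu$ must be exploited, and it enters twice — in the cardinal arithmetic of the final count, and in the failure of the transfinite analogue of the symplectic-basis construction. The plan is to attach to each form a combinatorial invariant that can take $2^{\mu}$ distinct values and is preserved by isometry, and then to realize all of these values by explicit forms.

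First I would fix a set $I$ with $|I|=\mu$ and a well-order of $I$, and for each simple graph $\Gamma$ on $I$ build a form as follows. Let $V_{\Gamma}$ have basis $\{a_i,b_i : i\in I\}$, so $\dim V_{\Gamma}=\mu$, and define $\varphi_{\Gamma}$ on basis vectors by $\varphi_{\Gamma}(a_i,b_i)=1=-\varphi_{\Gamma}(b_i,a_i)$, by $\varphi_{\Gamma}(a_i,a_j)=1=-\varphi_{\Gamma}(a_j,a_i)$ whenever $i<j$ and $\{i,j\}$ is an edge of $\Gamma$, and by $0$ on all remaining pairs and on the diagonal; then extend bilinearly. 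By construction $\varphi_{\Gamma}$ is skew and vanishes on the diagonal, hence alternate. Nondegeneracy follows from Proposition~\ref{locally nondegenerate} together with a direct computation: for $0\ne v=\sum_i(\alpha_i a_i+\beta_i b_i)$ one has $\varphi_{\Gamma}(v,b_i)=\alpha_i$, and if all $\alpha_i$ vanish then $\varphi_{\Gamma}(v,a_i)=-\beta_i$ for any $i$ with $\beta_i\ne 0$, so some pairing is nonzero.

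The crucial — and hardest — step is rigidity: one must show that $(V_{\Gamma},\varphi_{\Gamma})\cong(V_{\Gamma'},\varphi_{\Gamma'})$ forces $\Gamma\cong\Gamma'$, which is delicate because isometries recombine the basis freely. The phenomenon that makes it possible is visible already in low degree: when $\operatorname{char}\F\ne 2$, a vertex $i$ of \emph{finite} degree can be removed by the finite change of basis $a_i\mapsto a_i-\tfrac12\sum_j\varphi_{\Gamma}(a_i,a_j)b_j$, which restores a hyperbolic pair and kills every edge at $i$; but if $i$ has \emph{infinite} degree the same correction is an infinite, hence illegal, combination. Thus only the \emph{infinitary} adjacency pattern of $\Gamma$ can survive isometry, and I would restrict attention to a coding class of graphs whose isomorphism type is already determined by that pattern (for instance graphs in which every vertex has infinite degree and the neighbourhoods form a sufficiently independent family). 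On such a class one then defines an intrinsic isometry invariant — essentially the structure of the ordered set of finite-dimensional nondegenerate subspaces together with the non-splitting behaviour of their transfinite unions — from which $\Gamma$ can be reconstructed; making this invariant precise and complete is the technical heart of the argument. It is exactly here that $\mu>\aleph_0$ is indispensable: Lemma~\ref{Projections} splits off only \emph{finite}-dimensional nondegenerate subspaces, so the back-and-forth that produces a symplectic basis in countable dimension (the preceding lemma) breaks at limit stages of uncountable cofinality, where an infinite-dimensional nondegenerate subspace need not be an orthogonal direct summand. This breakdown is precisely what lets the infinitary invariant be nontrivial.

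Finally I would invoke the standard fact that there are $2^{\mu}$ pairwise non-isomorphic graphs on $\mu$ vertices, and that the coding class can be chosen so as to still contain $2^{\mu}$ of them; together with the rigidity step this yields $2^{\mu}$ pairwise nonisometric forms. The bound is sharp when $|\F|\le 2^{\mu}$, since a form of dimension $\mu$ is determined by its $\mu\cdot\mu=\mu$ values on a basis, giving at most $|\F|^{\mu}\le 2^{\mu}$ forms in total; for larger fields only the stated existence is asserted. The case $\operatorname{char}\F=2$ is handled in the same way, replacing the hyperbolic-plus-edges local model and the symplectification step by their characteristic-two analogues and using that each finite dimension still carries only finitely many isometry types.
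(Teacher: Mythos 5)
Your construction and the nondegeneracy computation are fine, but the proof has a genuine gap exactly where you say the ``technical heart'' lies: the rigidity step is never carried out, and it is the entire content of the theorem. The heuristic you give --- finite-degree vertices can be corrected away by a finite change of basis, infinite-degree ones cannot --- does not constrain an arbitrary isometry, which need not carry your basis $\{a_i,b_i\}$ to any basis of the same shape, or indeed interact with it in any controlled way. The invariant you propose (``the structure of the ordered set of finite-dimensional nondegenerate subspaces together with the non-splitting behaviour of their transfinite unions'') is not defined, not shown to be preserved by isometry, and not shown to recover $\Gamma$; nor is the ``coding class'' of graphs specified, nor is it shown that $2^{\mu}$ graphs in that class have pairwise distinct invariants. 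A plan that defers precisely the step where all the difficulty is concentrated is not a proof, and there is no evident way to complete it along the lines sketched.

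It is worth comparing with how the paper (following Hall, whose Main Theorem it cites rather than reproves) handles exactly this difficulty in its proof of the characteristic-$2$ analogue, Theorem \ref{forms over F_2}: one axiomatizes nondegenerate forms of the relevant kind as a first-order theory $T$ in a two-sorted language, checks that isomorphism of models coincides with isometry of forms, and then, instead of proving any rigidity statement, exhibits a single explicit form in which a linear order $(\mu,<)$ is definable (there, via $\varphi(v_i,u_j)=1$ iff $i<j$). This witnesses the order property, hence instability of $T$, and Shelah's theorem then yields $2^{\mu}$ pairwise nonisomorphic models in every uncountable cardinality --- with no need to decide when two coded structures give isometric forms. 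Your graph-coded forms could at best play the role of that single order-encoding example; the ``many nonisometric forms'' conclusion is exactly what Shelah's machinery supplies and what your sketch leaves unproved. If you want a proof in the spirit of your proposal, the realistic route is to define your forms so that a linear order is first-order definable from $\varphi$, and then quote the instability theorem, rather than attempt direct reconstruction of $\Gamma$ from the isometry type.
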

	As skew symmetric bilinear forms of characteristic 2 may not be alternate, we need to handle this case separately. There are 3 kinds of skew-symmetric bilinear forms which are characterized by the invariant $t(\varphi)$ as follows:
	\begin{defn}
		Denote $\beta:V\to \F$ by $\beta(v)=\varphi(v,v)$. Let $A=\ker (\beta)$, and $A^{\perp}$ be the orthogonal complement.
		\begin{itemize}
			\item If $A=V$ define $t(\varphi)=1$.
			\item If $A\ne V$ and $0\ne A^{\perp}\subseteq A$ define $t(\varphi)=1$.
			\item If $A\ne V$ and $A^{\perp}\nsubseteq A$ define $t(\varphi)=-1$.
			\item If $A^{\perp}=0$ define $t(\varphi)=0$.
		\end{itemize}
	\end{defn}    
	\begin{thm}\label{forms over F_2}
		Let $\mu>\aleph_0$ be a cardinal. For every $t\in \{-1,0,1\}$, there are $2^{\mu}$ pairwise nonisometric nonalternate nondegenerate skew-summetric bilinear forms $\varphi$ over $\F_2$ of dimension $\mu$, such that $t(\varphi)=t$.
	\end{thm}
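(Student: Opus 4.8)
The plan is to transfer the lower bound of Theorem~\ref{maximal number of forms} from the alternate setting to the nonalternate one. Fix $t\in\{-1,0,1\}$, and let $\{(V_i,\psi_i)\}_{i\in I}$, with $|I|=2^\mu$, be the pairwise nonisometric nondegenerate alternate forms of dimension $\mu$ over $\F_2$ provided by Theorem~\ref{maximal number of forms}. To each I would attach a small nonalternate ``tail'' so as to obtain a nondegenerate nonalternate form $\varphi_i$ of the same dimension $\mu$ with $t(\varphi_i)=t$, arranged so that $(V_i,\psi_i)$ can be read off \emph{intrinsically} from $\varphi_i$. The observation making this precise is that over $\F_2$ the map $\beta(v)=\varphi(v,v)$ is $\F_2$-linear (since $\varphi(u,v)+\varphi(v,u)=2\varphi(u,v)=0$), so $A=\ker\beta$ is a subspace preserved by every isometry, and hence so are $A^\perp$, the radical of $\varphi|_A$, and the induced form on $A/\operatorname{rad}(\varphi|_A)$. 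If each tail is arranged so that this canonical recovery returns exactly $(V_i,\psi_i)$, then $\varphi_i\cong\varphi_j$ forces $\psi_i\cong\psi_j$, i.e.\ $i=j$, yielding $2^\mu$ pairwise nonisometric forms; the matching upper bound (at most $2^\mu$ forms exist on a space of dimension $\mu$) is automatic.

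For $t=-1$ I would take $\varphi_i$ on $V_i\perp\langle e\rangle$ with $\varphi(e,e)=1$. Then $\beta(v+\lambda e)=\lambda$, so $A=\ker\beta=V_i$ with $\varphi|_A=\psi_i$, while $A^\perp=\langle e\rangle$ and $\beta(e)=1$ give $A^\perp\not\subseteq A$, i.e.\ $t=-1$. For $t=1$ I would instead orthogonally adjoin the two-dimensional identity form: $\varphi_i$ on $V_i\perp\langle e_1,e_2\rangle$ with $\varphi(e_a,e_b)=\delta_{ab}$ on the new plane. Here $\beta(v+x e_1+y e_2)=x+y$, so $A=V_i\perp\langle e_1+e_2\rangle$; one computes $A^\perp=\langle e_1+e_2\rangle$, which is nonzero and contained in $A$ (as $\beta(e_1+e_2)=0$), giving $t=1$. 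Since $\langle e_1+e_2\rangle$ is exactly $\operatorname{rad}(\varphi|_A)$, the quotient $A/\operatorname{rad}(\varphi|_A)\cong(V_i,\psi_i)$ recovers the Hall form. In both cases nondegeneracy and dimension $\mu$ are immediate, as a finite nondegenerate piece was adjoined orthogonally.

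The case $t=0$ is the delicate one, and it is where I expect the main obstacle. A $t=0$ form must be infinite-dimensional with $A^\perp=0$, and the naive attempt $V_i\perp W_0$, with $W_0$ the countable identity form $I_\infty$ (which does satisfy $t=0$, since its diagonal functional is unrepresented), fails for the recovery: one checks $A=\ker\beta=V_i\perp H_\infty$ is \emph{nondegenerate} alternate, $H_\infty$ being the unique countable alternate form, so stripping the tail would require a cancellation $V_i\perp H_\infty\cong V_j\perp H_\infty\Rightarrow V_i\cong V_j$ that is not available in general. I would avoid this by using a \emph{non-orthogonal} one-dimensional extension: put $\varphi_i$ on $V_i\oplus\F_2 c$ with $\varphi(c,c)=1$ and $\varphi(c,\cdot)|_{V_i}=\ell$, where $\ell\in V_i^{*}$ is \emph{not} represented by $\psi_i$ (such $\ell$ exist because $\dim V_i$ is infinite, so the injection $v\mapsto\psi_i(v,\cdot)$ from $V_i$ into $V_i^{*}$ is not surjective). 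Then $\beta(v+\lambda c)=\lambda$ still gives $A=\ker\beta=V_i$ with $\varphi|_A=\psi_i$ recovered on the nose, while a short computation shows that $A^\perp\ne 0$ would force $\ell$ to be represented; hence $A^\perp=0$ and $t=0$. Nondegeneracy of $\varphi_i$ is checked directly by pairing against $c$ and against suitable $u\in V_i$, using that $\ell$ is unrepresented.

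The steps I expect to require the most care are (i) the $A^\perp$-computations pinning down $t(\varphi_i)$ in each case, especially translating ``$\ell$ unrepresented'' into ``$A^\perp=0$'' for $t=0$; and (ii) the recognition that the $t=0$ construction must break orthogonality, since any orthogonal nonalternate tail contributes a nondegenerate alternate summand to $\ker\beta$ and thereby reintroduces an intractable cancellation problem. Once the intrinsic recovery of $(V_i,\psi_i)$ from $\ker\beta$ (after quotienting by its radical when $t=1$) is established in all three cases, the $2^\mu$ lower bound passes directly from Theorem~\ref{maximal number of forms} to the nonalternate forms, completing the proof.
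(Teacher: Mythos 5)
Your proposal is correct, but it takes a genuinely different route from the paper. The paper's proof is model-theoretic, following Hall's original argument: it encodes the nondegenerate nonalternate skew-symmetric forms with prescribed $t$ as models of a first-order theory $T$, proves $T$ unstable by exhibiting, for each $t\in\{-1,0,1\}$, one explicit form into which any linear order $(\mu,<)$ embeds via a formula, and then cites Shelah's theorems to produce $2^{\mu}$ pairwise nonisomorphic models in every uncountable cardinality. You instead take Theorem \ref{maximal number of forms} as a black box and transfer it to the nonalternate setting by a tail-plus-intrinsic-recovery argument: since over $\F_2$ the map $\beta(v)=\varphi(v,v)$ is linear, the subspace $A=\ker\beta$, its perp $A^{\perp}$, the radical $A\cap A^{\perp}$ of $\varphi|_A$, and the induced form on $A/(A\cap A^{\perp})$ are isometry invariants, and your three constructions are rigged so that this invariant returns exactly the Hall form $(V_i,\psi_i)$. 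Your computations check out: for $t=-1$, $A=V_i$ and $A^{\perp}=\langle e\rangle\nsubseteq A$; for $t=1$, $A=V_i\perp\langle e_1+e_2\rangle$ and $A^{\perp}=\langle e_1+e_2\rangle=\operatorname{rad}(\varphi|_A)$, so the quotient recovers $\psi_i$; for $t=0$, the unrepresentability of $\ell$ is precisely what forces $A^{\perp}=0$ (which also gives nondegeneracy of $\varphi_i$ for free, as the radical sits inside $A^{\perp}$), and your observation that the $t=0$ tail must be glued non-orthogonally — to avoid a Witt-cancellation problem against the countable symplectic form inside $\ker\beta$ — is a genuine and correctly handled subtlety. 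As for what each approach buys: yours is elementary linear algebra on top of a theorem the paper already quotes, and it makes the $2^{\mu}$ forms completely explicit as one- or two-dimensional modifications of Hall's forms; the paper's route requires Shelah's stability machinery but needs no recovery or cancellation considerations, works uniformly in all uncountable cardinalities from a single definable order-embedding, and is the natural extension of how Theorem \ref{maximal number of forms} itself is proved.
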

	\begin{proof}
		This proof is based on the proof of \cite[Main Theorem]{hall1988number}. Let $t\in \{-1,0,1\}$. Consider the first order language $L$ with 
		\begin{enumerate}
			\item Two unary relation symbols $\FF$ and $\mathcal{V}$;
			\item Five ternary relation symbols $AF, MF, AV, SM, F$;
			\item A constant 0;
			\item A constant 1, for $1\in \F_2$.
		\end{enumerate}
		From $L$ we fashion a theory of nondegenerate skew-symmetric bilinear forms $\varphi$ over $\F_2$ with $t(\varphi)=t$. For a model $M$ of this theory, $M$
		will be composed of a copy of $\F_2$ given by $\FF_M$, an $\F_2$-vector space given by $\mathcal{V}_M$, and a bilinear form from $\mathcal{V}_M\times \mathcal{V}_M$ to $\mathcal{F}_M$ described by $F_M$. More precisely, we axiomatize  the theory using first order sentences of $L$. We require $\FF_M$ and $\mathcal{V}_M$ to be disjoint subsets of the model $M$ whose union is all
		of $M$, where $\FF_m$ is isomorphic to $\F_2$ and $\mathcal{V}_M$ is a vector space over $\F_M$. Here addition
		and multiplication in $\FF$ are described by the relations: $(\alpha,\beta,\alpha+\beta)\in AF$ and $(\alpha,\beta,\alpha\beta)\in MF$. Vector addition and scalar multiplication are described by $(v,u,v+u)\in AV$ and $(\alpha,v,\alpha v)\in SM$. Now $F$ describes a bilinear form $\varphi$ on $\mathcal{V}$ via $(v,u,\varphi(v,u))\in F$. We can require $\varphi$ to be skew-symmetric, nondegenerate, nonalternate with $t(\varphi)=t$. Eventually, we require $\FF$ to be isomorphic to $\F_2$ by the first order sentence: $\forall x\in \FF (x=0\lor x=1)$. We got the axioms of a theory $T$ of nondegenerate nonalternate skew-symmetric bilinear forms $\varphi$ over $\F_2$ with $t(\varphi)=t$.
		
		To every model $M$ of $T$ there is associated bilinear form as in the theorem. Conversely, every bilinear form as in the
		theorem gives rise to a model of $T$. It is not difficult to check that two
		models of $T$ are isomorphic if and only if the corresponding bilinear forms are
		isometric. Now we want to apply Shelah's results on instability \cite{shelah1972combinatorial} to $T$ and finish the proof. In order to do that, we need to show that our theory $T$ is unstable. By \cite[Theorem 2.4.2]{shelah1972combinatorial}, it is enough to show that for every infinite cardinal $\mu$, there is a model $M$ of $T$ which embeds $(\mu,<)$ via a formula of $T$. We will build such a model $M$ for every $t\in \{-1,0,1\}$ as follows: 	Let $\mu$ be a cardinal. Let $V$ be the vector space over $\mathbb{F}_2$ with basis $\{u_i,v_i:i\in \mu\}$. We define a skew-symmetric bilinear form on $V$ by the following rules:
		\begin{enumerate}
			\item $\varphi(v_i,v_j)=0, \forall i\ne j$. $\varphi(v_i,v_i)=1 \forall i$ $\varphi(u_i,u_j)=0, \forall i\ne j$. $\varphi(u_i,u_i)=1 \forall i$. $\varphi(v_i,u_j)=1, \forall i< j$. $\varphi(v_i,u_j)=0, \forall i\geq  j$.
			
			This is a nondegenerate form. Indeed, let $v=\sum_{i\in J_1} v_i+\sum _{i\in J_2}u_i$ be an arbitrary element in $V$. If $j_1=\max J_1\geq \max J_2=j_2$, then $\varphi(v,v_{j_1})=\varphi(v_{j_1},v_{j_1})=1$. Otherwise, $\varphi(u_{j_2},v)=1+|J_1|\mod 2$. Take $i>j_2$ $\varphi(u_i,v)=|J_1|\mod 2$. One of them must be nonzero. Now $\ker(\beta)$ equals to all the sums of even number of basis elements, and one checks immediately that its orthogonal complement is trivial. Hence we got a nondegenerate bilinear form with $t=0$.
			\item 
			
			$\varphi(v_i,v_j)=0, \forall i,j$. $\varphi(u_i,u_j)=0, \forall (i,j)\ne (0,0)$. $\varphi(u_0,u_0)=1$, $\varphi(v_i,u_j)=1, \forall i<j$. $\varphi(v_i,u_j)=0, \forall i\geq j$.

			\item $\varphi(v_i,v_j)=0, \forall i,j$. $\varphi(u_i,u_j)=0, \forall (i,j)\ne (0,0)$. $\varphi(u_0,u_0)=1$, $\varphi(v_i,u_j)=1, \forall i>j$. $\varphi(v_i,u_j)=0, \forall i\leq j$. One easily checks that this is a nondegenerate bilinear form with $t=-1$.

		\end{enumerate}
		Notice that $(\mu,<)$ can be embedded into $M$ via the formula $i<j\iff M\models \lnot \delta[(u_i,v_i),(u_j,v_j)]$ in the first case, and $i<j\iff M\models  \delta[(u_i,v_i),(u_j,v_j)]$ in the second and third cases, where $\delta[(x,y),(z,w)]\iff (y,z,0)\in F$.
		
		Now by \cite[Theorem 2.6]{shelah1972combinatorial}, for every $\mu>\aleph_0$ there are $2^{\mu}$ pairwise nonisomorphic models of $T$ of cardinality $\mu$. 
	\end{proof}
	\section*{Algebraic properties of Demushkin groups of uncountable rank}
	We start with arbitrary projective limits of Demushkin groups.
	\begin{prop}
		A projective limit of Demushkin groups of arbitrary rank is either Demushkin or free.
	\end{prop}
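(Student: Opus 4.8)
The plan is to control the single invariant that separates the two alternatives, namely $H^2(G,\F_p)$, and then to read off nondegeneracy of the cup product directly from the inverse system. Write $G=\varprojlim_i G_i$ and, as is standard for projective limits, assume the transition maps $\pi_{ij}\colon G_j\to G_i$ (and hence the projections $p_i\colon G\to G_i$) are surjective, so that each $G_i$ is a genuine Demushkin quotient of $G$. The first step is the formula $H^n(G,\F_p)=\varinjlim_i H^n(G_i,\F_p)$, whose transition maps are the inflations $\pi_{ij}^{\ast}$: it holds because a continuous cochain $G^n\to\F_p$ factors through $(G/\ker p_i)^n=G_i^n$ for some $i$ (the kernels $\ker p_i$ are a downward directed family of closed subgroups with trivial intersection), and taking cohomology commutes with filtered colimits. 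Cup product is compatible with these maps since inflation is a ring homomorphism on cohomology.

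Applying this with $n=2$ gives $H^2(G,\F_p)=\varinjlim_i H^2(G_i,\F_p)$, a filtered colimit of one-dimensional $\F_p$-spaces in which every transition map, being $\F_p$-linear between lines, is either zero or an isomorphism. Any two elements of the colimit are represented at a common index inside a one-dimensional space, hence are linearly dependent, so $\dim H^2(G,\F_p)\le 1$. This is the dichotomy. If $H^2(G,\F_p)=0$, then since $G$ is pro-$p$ we have $\operatorname{cd}(G)\le 1$, whence $G$ is a free pro-$p$ group, which settles the first alternative.

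Suppose instead $\dim H^2(G,\F_p)=1$. Because the colimit is nonzero, there is an index $\gamma$ beyond which all transition maps on $H^2$ are isomorphisms: choose $v\in H^2(G_\gamma)$ with nonzero image in $H^2(G)$, observe that its image in each $H^2(G_j)$ with $j\ge\gamma$ is nonzero, and use that a nonzero linear map between lines is an isomorphism. Fix a generator $\omega$ of $H^2(G)$ together with the compatible generators $\omega_i\in H^2(G_i)$ for $i\ge\gamma$ mapping to it; identifying each $H^2(G_i)$ with $\F_p$ via $\omega_i$ turns every inflation map into the identity on $\F_p$. Consequently the cup-product forms $\varphi_i$ on $H^1(G_i)$ are intertwined by inflation, i.e. $\varphi_j(\pi_{ij}^{\ast}x,\pi_{ij}^{\ast}y)=\varphi_i(x,y)$ for $i\le j$ with $i\ge\gamma$, and the natural maps $\iota_i\colon H^1(G_i)\to H^1(G)=\varinjlim_i H^1(G_i)$ satisfy $\varphi(\iota_i x,\iota_i y)=\varphi_i(x,y)$ for the cup-product form $\varphi$ on $H^1(G)$. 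Nondegeneracy is now immediate: given nonzero $\bar x\in H^1(G)$, represent it by some $x\in H^1(G_i)$ with $i\ge\gamma$, so $x\ne 0$; since $G_i$ is Demushkin there is $y\in H^1(G_i)$ with $\varphi_i(x,y)\ne 0$, and then $\varphi(\bar x,\iota_i y)=\varphi_i(x,y)\ne 0$. Hence $\varphi$ is nondegenerate and $G$ is Demushkin.

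The main obstacle is conceptual rather than computational. One might expect to have to match up, across all levels of the system, the partners witnessing nondegeneracy, and these need not be compatible in the direct limit. The observation that shortens the argument is the stabilization of $H^2$ to isomorphisms, which forces inflation to preserve the cup-product forms; once this is in place, a nonzero functional and a witness to its nondegeneracy can be found at one and the same index, and their pairing is automatically preserved in the limit, so no compatibility across levels is needed. A secondary point that must be checked with care is the reduction to a surjective inverse system, which guarantees that the groups entering the colimit formula are the Demushkin groups $G_i$ themselves rather than proper images; this is the only place where the surjectivity hypothesis is genuinely used.
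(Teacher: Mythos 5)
Your proof is correct and takes essentially the same route as the paper's: identify $H^{\bullet}(G,\mathbb{F}_p)$ with the direct limit of the $H^{\bullet}(G_i,\mathbb{F}_p)$ along inflation, deduce $\dim H^2(G)\leq 1$, conclude $G$ is free when $H^2(G)=0$, and otherwise pass to a cofinal subsystem on which the inflation maps on $H^2$ are isomorphisms so that nondegeneracy of the cup product is inherited by the limit (your stabilization argument just makes explicit the paper's phrase ``we can assume the inflation maps on $H^2$ are isomorphisms,'' and you are in fact more careful than the paper in flagging the reduction to a surjective system). The only content of the paper's proof you omit is a construction showing that the free alternative genuinely occurs, which is supplementary and not needed for the stated dichotomy.
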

	\begin{proof}
		Let $G={\invlim}_i\{G_i,f_i\}$ be a projective limit of Demushkin groups. By definition, for every $i$, $H^2(G_i)\cong \F_p$. Hence $H^2(G)=0\lor \F_p$. If $H^2(G)=0$ then $\operatorname{cd}(G)\leq 1$ and we get that $G$ is free. So assume $H^2(G)=\F_p$. We can assume that the inflation maps $\operatorname{Inf}:H^2(G_i)\to H^2(G_j)$ are isomorphism for all $i\leq j$, and hence the inflation maps  $\operatorname{Inf}:H^2(G_i)\to H^2(G)$ are isomorphisms for all $i$. Since $H^1(G)={\dirlim}_iH^1(G_i)$ and the cup product commutes with direct limits, we get that $(H^1(G),\cup)$ is nondegenerate. Henceforth $G$ is a Demushkin group. Now we want to show that both of these outcomes indeed occur. Obviously, every Demushkin group can be expressed as the inverse limit of copies of itself with isomorphism. So we will construct an example of a projective limit of Demushkin groups with is free.
		
		Let $G=\lim G_n$ be a Demushkin group of rank $\aleph_0$ expressed as a strictly increasing projective limit of f.g Demushkin groups. This can be done due to \cite[Theorem 1]{labute1966demuvskin}. We claim that for every open subgroup $H_n\leq G_n$ there exist an open subgroup $H_{n+1}\leq G_{n+1}$ such that $\varphi(H_{n+1})=H_n$, where $\varpi$ is the given epimorphism $G_{n+1}\to G_n$, and $[G_{n+1}:H_{n+1}]>[G_n:H_n]$. For that it is enough to show that $H_{n+1}\ne \varphi^{-1}(H_n)$. By \cite[Lemma 2.8.15]{ribes2000profinite} there is a minimal subgroup $K\leq G_n$ such that $\varphi(K)=H_n$, and that $K$ satisfies $\ker(\varphi|_{K})\leq \Phi(K)$. So it is enough to show that $M=\varphi^{-1}(H_n)$ doesn't satisfy $\ker(\varphi|_{M})\leq \Phi(M)$ to get that it is not minimal. Assume $\ker(\varphi|_{M})\leq \Phi(M)$. Then $d(M)=d(H_n)$. However, for Demushkin groups we have formula for the rank of an open subgroup. $d(M)=[G_{n+1}:M](d(G_{n+1})-2)+2>[G_n:H_n](d(G_n)-2)+2=d(H_n)$ since $[G_{n+1}:M]=[G_n:H_n]$ and $d(G_{n+1})>d(G_n)$. So we can take such a series of open subgroups projecting on each other of increasing index, and get that the inverse limit is a closed subgroup of infinite index in a countably generated Demushkin group, and hence by \cite[Theorem 2]{labute1966demuvskin} it is free. 
	\end{proof}
	Now we present the most useful tool in the study of Demushkin groups of uncountable rank.
	\begin{thm}\label{inverse limit}
		Every Demushkin group is an inverse limit of finitely generated Demushkin groups.
	\end{thm}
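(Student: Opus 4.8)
The plan is to realize $G$ as the inverse limit of the system of \emph{all} its finitely generated Demushkin quotients, with the canonical projections as transition maps; the work then splits into producing enough such quotients and checking that they form a cofiltered (directed) system. First I would fix the presentation: since $\dim H^2(G)=1$, write $G=F/\langle\langle r\rangle\rangle$ with $F$ free pro-$p$ of rank $\mu$ and $r\in\Phi(F)$, and identify $V:=H^1(G)=H^1(F)$ together with its nondegenerate cup-product form $\varphi$. By Proposition \ref{locally nondegenerate} the form $\varphi$ is locally nondegenerate, so the finite-dimensional nondegenerate subspaces $U\subseteq V$ are directed by inclusion and cover $V$.

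The core step is a lemma attaching a finitely generated Demushkin quotient to each finite-dimensional nondegenerate $U\subseteq V$ (and, more generally, sitting inside any prescribed closed normal $K\trianglelefteq G$ with $G/K$ finitely generated). Using Lemma \ref{Projections} I would split $V=U\oplus U^{\perp}$; dualizing gives $G/\Phi(G)=P_U\oplus Q_U$ with $Q_U=U_\circ$ the annihilator of $U$. I then choose topological generators $\{y_i\}$ lifting a basis of $P_U$ and $\{z_\beta\}$ lifting a basis of $Q_U$, taking the $z_\beta$ inside $K$ when $U\supseteq(\overline{K})^{\circ}$ (possible since then $U_\circ\subseteq\overline{K}$). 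As $\{y_i\}\cup\{z_\beta\}$ maps to a basis of $G/\Phi(G)$, it lifts to a free basis of $F$, whence $G_U:=G/\langle\langle z_\beta\rangle\rangle=F'/\langle\langle\bar r\rangle\rangle$, where $F'$ is free on $\{y_i\}$ and $\bar r$ is the image of $r$ after setting the $z_\beta$ to $1$. Thus $G_U$ is finitely generated and one-relator, its $H^1$ is $U$ (inflated), and its cup form is the nondegenerate form $\varphi|_U$; hence $G_U$ is Demushkin. In particular $\ker(G\to G_U)\subseteq K$, so every finite quotient of $G$ is dominated by a finitely generated Demushkin quotient, which immediately yields that the kernels of all such quotients intersect trivially (separation).

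It then remains to assemble these quotients into an inverse system ordered by domination, i.e.\ by inclusion of kernels, so that the transition maps are canonical projections and are automatically compatible. With separation in hand, the one nontrivial point is \emph{directedness}: given $q_1\colon G\to G_1$ and $q_2\colon G\to G_2$ I must find a common finitely generated Demushkin refinement. The natural candidate is $D:=G/(\ker q_1\cap\ker q_2)$, which one checks is the fibre product $G_1\times_{G_{12}}G_2$ over $G_{12}:=G/(\ker q_1\cdot\ker q_2)$; applying the lemma to $K=\ker q_1\cap\ker q_2$ produces the desired $G_3$ \emph{as soon as} $G/K$ is finitely generated, because one can then take a finite nondegenerate $U\supseteq (\overline{K})^{\circ}$ and land the generating set of the kernel inside $K$.

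The finiteness $\dim H^1(D)<\infty$ is the main obstacle, and I expect it to be the step requiring genuine care. The intended route is cohomological: the kernel $\ker(G_2\to G_{12})$ has infinite index in the finitely generated Demushkin group $G_2$ and is therefore free pro-$p$, so in the extension $1\to\ker(G_2\to G_{12})\to G_2\to G_{12}\to 1$ the inflation--restriction (five-term) sequence, combined with $\operatorname{cd}(G_2)=2$ and $\dim H^2(G_2)=1$, should bound the module of coinvariants controlling $H^1(D)$. This is exactly the delicate point, since the relevant bound threatens to circle back to the finiteness of $H^2(G_{12})$; closing the argument is where the Poincaré-duality flavour of Demushkin groups (cohomological dimension $2$, freeness of infinite-index subgroups) must be used decisively. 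Granting this finiteness, $D$ is finitely generated, the lemma applies to give the common refinement, the system of finitely generated Demushkin quotients is directed and separating, and therefore $G=\invlim_U G_U$, as claimed.
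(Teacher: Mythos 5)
Your core lemma is the paper's own construction, essentially verbatim: the paper fixes a free basis $X$ of $F$, and for each finite $J\subseteq X$ uses Proposition \ref{locally nondegenerate} to find a finite-dimensional nondegenerate subspace $L\supseteq J^*$, completes a basis of $L$ to a basis of $H^1(G)$, lifts the dual basis to a free basis $B_1\cup B_2$ of $F$ with the complementary part $B_2$ placed inside $\overline{\langle X\setminus J\rangle}$, and kills $B_2$; the resulting one-relator quotient has $H^1\cong L$ with nondegenerate cup product, hence is finitely generated Demushkin. That is exactly your splitting $V=U\oplus U^{\perp}$, your generators $z_\beta$ placed inside a prescribed normal subgroup, and your $G_U$; and your separation step (kernels can be forced inside any open normal subgroup, hence meet trivially) is the paper's observation that $\ker\varphi_J\subseteq H_{J^c}$ and $\bigcap_J H_{J^c}=1$. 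For the comparison, note that this is where the paper's proof \emph{ends}: it treats the vanishing of $\bigcap_J\ker\varphi_J$ as completing the argument, and never addresses the issue you raise, namely that the quotients must form a cofiltered system before one can write $G=\invlim G_J$. (In countable rank this is harmless, since one can exhaust a basis by a chain of finite subsets and refine one quotient at a time; in uncountable rank genuine directedness is needed, and there is no countable cofinal chain.)

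However, the route you propose for directedness is not merely unfinished --- it fails, so your ``granting this finiteness'' is a genuine gap rather than a routine verification. Keep your notation $K_i=\ker q_i$, $D=G/(K_1\cap K_2)$, $G_{12}=G/(K_1K_2)$, and set $P=\ker(D\to G_1)\cong q_2(K_1)$, so that $G_2/q_2(K_1)\cong G_{12}$. Since $D\le G_1\times G_2$ projects onto $G_2$ and $P=D\cap(1\times G_2)$, the action of $D$ on $P$ factors through $G_2$; inflation--restriction for $1\to P\to D\to G_1\to 1$ and the five-term homology sequence for $1\to q_2(K_1)\to G_2\to G_{12}\to 1$ give
\[
\dim H^1(D)\;\ge\;\dim H^1(P)^{G_2}-\dim H^2(G_1)\;\ge\;\dim H_2(G_{12},\F_p)-\dim H_2(G_2,\F_p)-\dim H^2(G_1).
\]
As $\dim H_2(G_2,\F_p)=\dim H^2(G_1)=1$, if $H_2(G_{12},\F_p)$ is infinite-dimensional then $D$ is not finitely generated, and then \emph{no} finitely generated quotient of $G$ at all has kernel inside $K_1\cap K_2$ (any such quotient surjects onto $D$): the pair $q_1,q_2$ has no common refinement whatsoever. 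Such pairs exist. Take $G$ with relation $r=\prod_{i\ge1}[x_i,y_i]$, let $K_1$ be the normal closure of $\{x_i,y_i\}_{i\ge3}$, so $G_1$ is the rank-$4$ Demushkin group, and let $\pi:G_1\to Q:=\bigl(\Z_p[[\Z_p]]\rtimes\Z_p\bigr)\times\Z_p^2$ send $x_1,y_1,x_2,y_2$ to $(s,0,0),(a,0,0),(a,1,0),(s,0,1)$, where $s$ generates the acting copy of $\Z_p$ and $a$ generates the base; the relation maps to $[s,a][a,s]=1$, the map is onto, and $\ker\pi\subseteq\Phi(G_1)$. Here $Q$ is the pro-$p$ lamplighter times $\Z_p^2$, finitely generated but not finitely presented ($H^2(Q,\F_p)$ surjects onto $H^2(\Z_p[[\Z_p]],\F_p)^{\Z_p}$, which is infinite-dimensional). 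Choosing normal generators of $\ker\pi$ converging to $1$, lifting them to $u_j,v_j\in\Phi\bigl(\overline{\langle x_1,y_1,x_2,y_2\rangle}\bigr)$, and letting $K_2$ be the kernel of the quotient killing $\{x_ju_j,y_jv_j\}_{j\ge3}$, one gets another rank-$4$ Demushkin quotient (the relation becomes $[x_1,y_1][x_2,y_2]\prod_j[u_j^{-1},v_j^{-1}]$, and the extra commutators lie in $[\Phi,\Phi]$, so the cup form is unchanged), while $G/(K_1K_2)\cong G_1/\ker\pi\cong Q$. So the system of \emph{all} finitely generated Demushkin quotients is genuinely not cofiltered; directedness must come from a restricted, coherently chosen subfamily of quotients, and producing such a separating directed family is exactly the point that your argument leaves open (and that the paper's own text does not address either).
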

	\begin{proof}
		Let $G=F /  \langle  r^F \rangle$ be a Demushkin group of infinite rank. 
		Let $X$ be a free basis of $F$ and let $X^*$ be the dual basis to $X$ in $H^1(G)$. 
		By definition, the vector space $H^1(G)$ has a  nondegenerate cup product bilinear form. 
		For a finite subset $J \subset X$ Lemma \ref{locally nondegenerate} gives a finite dimensional nonegenerate subspace $L=L_J $ of $H^1(G)$ which contains $J^*=\{x^*\}_{x\in J}\subseteq X^*$. Choose a basis $B^*_1$ for $L$ and enlarge it to a basis $B^*_1 \cup B^*_2$ of $H^1(G)$. Let $\bar B_1 \cup \bar B_2$ be the dual basis to $B^*_1 \cup B^*_2$ in $G/\Phi(G)$.
		Observe that 
		
		\[ \bar B_2 \subset \bigcap_{f \in L} \ker f  \subset \bigcap_{f \in J^*} \ker f,\]
		while $\cap_{f \in J^*} \ker f$ is the closed subgroup spanned by the images of $X-J$ in $G/\Phi(G)$.
		Now lift the basis $\bar B_1 \cup \bar B_2$ of $G/\Phi(G)$ to a basis $B_1 \cup B_2$ of $F$ such that $B_2$ belongs to the closed subgroup of $F$ generated by $X-J$.
		
		Let $F'$ be the free pro-$p$ group generated by $B_1$, and define an epimorphism $\varphi:F\to F'$ by letting $x \mapsto x$ for $x \in B_1$ and $x\mapsto 1$ for $x \in B_2$. Let $r'=\varphi(r)$. We get an epimorphism $G\to G_J:=F'/\langle r'^{F'} \rangle$, which we denote by $\varphi_J$. We claim that $G_J$ is a Demushkin group. Indeed, as $G_J$ is constructed from a free profinite group by one relation, $\dim H^2(G_J)\leq 1$.
		
		Since $\varphi:G\to G_J$ is surjective, $\operatorname{Inf}:H^1(G_J)\to H^1(G)$ is injective. Hence it maps $H^1(G_J)$ isomorphically onto $Span\{B^*_1\}=L$ in $H^1(G)$. The cup product on $H^1(G_L)$ is induced from the cup product on $H^1(G)$ via the inflation map. It follows that the cup product $H^1(G_J)\cup H^1(G_J)\to H^2(G_J)$ is nondegenerate, and $\operatorname{Inf}:H^2(G_J)\to H^2(G)$ is not the zero map, which implies that $\dim H^2(G_J)=1$. Hence $G_J$ is a a Demushkin group as claimed. 
		
		In order to show that $G=\invlim \{G_J,\varphi_J\}$ of all the groups of this form, we need to show that the intersection of all the kernels $\bigcap \{\ker \varphi_J\}$ is trivial. Let $H_{J^c}$ be closed normal subgroup of $G$ generated by the complement $J^c=X-J$ of $J$ in $X$. By construction $\ker \varphi_j$ is contained in $H_{J^c}$.
		It is a basic property of profinite group that $\bigcap H_{J^c}$ is trivial where $J$ runs over all finite subsets of $X$. The proof is complete.
	\end{proof}
	\begin{cor}\label{open subgroups}
		Let $G$ be a Demushkin group of arbitrary rank. Then $\operatorname{cd}(G)=2$, and every open subgroup of a Demushkin group is again Demushkin. Moreover, every closed subgroup of infinite index of a Demushkin is free. 
	\end{cor}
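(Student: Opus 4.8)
The plan is to reduce each of the three assertions to the finitely generated case, where they are classical, using Theorem~\ref{inverse limit} together with the preceding Proposition that a projective limit of Demushkin groups is Demushkin or free. First I would compute $\operatorname{cd}(G)$. Writing $G=\invlim_J G_J$ as in Theorem~\ref{inverse limit}, each $G_J$ is a finitely generated Demushkin group, hence a Poincar\'e duality group of dimension $2$, so $H^3(G_J)=0$. Since continuous cohomology with finite coefficients turns inverse limits of profinite groups into direct limits of cohomology along inflation, $H^3(G)=\dirlim_J H^3(G_J)=0$, giving $\operatorname{cd}(G)\le 2$; combined with $H^2(G)=\F_p\ne 0$ this yields $\operatorname{cd}(G)=2$.

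Next, for an open subgroup $U\le G$, the key step is to exhibit $U$ itself as an inverse limit of finitely generated Demushkin groups. Keeping the epimorphisms $\varphi_J:G\to G_J$, since $U$ is open and $\bigcap_J\ker\varphi_J=1$ with the $\ker\varphi_J$ filtered downward, compactness of $G$ forces some $J_0$ with $\ker\varphi_{J_0}\subseteq U$. Then $U=\varphi_J^{-1}(\varphi_J(U))$ for every $J\supseteq J_0$ and $U=\invlim_{J\supseteq J_0}\varphi_J(U)$, where each $\varphi_J(U)$ is open in the finitely generated Demushkin group $G_J$ and is therefore Demushkin by the classical theory. By the Proposition on projective limits, $U$ is Demushkin or free; but $U$ is open in $G$, so $\operatorname{cd}(U)=\operatorname{cd}(G)=2$, which excludes free. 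Hence $U$ is Demushkin.

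Finally, for a closed subgroup $H\le G$ of infinite index I would show $H^2(H)=0$, forcing $\operatorname{cd}(H)\le 1$, i.e. $H$ free. Writing $H$ as the intersection of the open subgroups $U\supseteq H$, continuity of cohomology gives $H^2(H)=\dirlim_{U\supseteq H}H^2(U)$ along restriction maps, and by the previous paragraph every such $U$ is Demushkin with $H^2(U)=\F_p$. Because $[G:H]=\infty$, every open $U\supseteq H$ satisfies $[U:H]=\infty$, so the image of $H$ in $U/\Phi(U)$ is a proper subspace and lies in some hyperplane; this produces an open $U'$ with $H\subseteq U'\subsetneq U$ of index $p$. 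For such a pair the restriction $H^2(U)\to H^2(U')$ vanishes: the corestriction $H^2(U')\to H^2(U)$ is surjective, since $\operatorname{cd}(U)=2$ makes $H^2(U,-)$ right exact (apply it, via Shapiro's lemma, to the augmentation $\F_p[U/U']\to\F_p$), hence an isomorphism of one-dimensional spaces, while $\operatorname{cor}\circ\operatorname{res}=[U:U']=p=0$. Therefore every class of $H^2(H)$ already dies in a smaller open subgroup containing $H$, the direct limit vanishes, and $H$ is free.

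I expect the main obstacle to lie in the open-subgroup step, namely identifying $U$ as an inverse limit of finitely generated Demushkin groups: one must observe that although the $\ker\varphi_J$ are not open, compactness still drives one of them inside the open set $U$, and that $\operatorname{cd}(U)=2$ is precisely what rules out the free alternative supplied by the Proposition. For the infinite-index step, the delicate point is the vanishing of the restriction map on $H^2$, whose crucial input is the right-exactness of top-degree cohomology in a group of cohomological dimension $2$.
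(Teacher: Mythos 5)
Your proposal is correct and takes essentially the same approach as the paper: the paper's proof is a one-line deferral to Labute's rank-$\aleph_0$ arguments (the corollary to his Theorem 1 and his Theorem 2) via Theorem~\ref{inverse limit}, and those are precisely the arguments you reconstruct — cohomology as a direct limit along the inverse system to get $\operatorname{cd}(G)=2$, reduction of open subgroups to the finitely generated case with the free alternative ruled out by $\operatorname{cd}(U)=\operatorname{cd}(G)=2$, and the restriction–corestriction vanishing in top degree to kill $H^2(H)$ for closed subgroups of infinite index. The only cosmetic point is the $p=2$ edge case in which some quotient $G_J$ could be the finite Demushkin group $\Z/2$ (whose open subgroups need not be Demushkin); this is avoided by passing to the cofinal subsystem of $J$ with $|J|\geq 2$, where every $G_J$ is infinite.
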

	\begin{proof}
		Using Theorem \ref{inverse limit}, this is identical to the proofs of \cite{labute1966demuvskin} Theorem 2, and the corollary to Theorem 1.	
	\end{proof}
	Now we want to study the invariants $q(G),s(G),\Img(\chi)$ of a Demushkin group of arbitrary rank. $q(G)$ will be defined as in the finite case, to be the maximal $q$ such that $r\in F^q[F,F]$ or $0$ if $r\in [F,F]$, when we express $G$ as $F/r$ for a free pro-$p$ group $F$. Observe that as $\operatorname{cd}(G)=2$, $G$ has a dualizing module $I$. As explained in \cite[page 4]{labute1966demuvskin}, $I=\Q_p/\Z_p$ or $I=\Z/qZ$ for some $p$-power $q$. In the first case we set $s(G)=0$, while in the second case we set $s(G)=q$. The dualizing module comes equipped with a character $\chi:G\to \operatorname{Aut}(I)\cong (Z_p/s(G))^{\times}$  hence we have the invariant $\Img(\chi)$. 	We shall need the properties $P$ and $Q$ introduced in \cite[Section 5]{labute1966demuvskin} as follows. 
	\begin{defn}
		Let $M$ be an abelian group with automorphism group isomorphic to $\Z_p/q\Z_p$ for $q$ a power of $p$ or 0, and let $\chi:G\to \operatorname{Aut}(M)$ be some homomorphism. \begin{enumerate}
			\item We say that $\chi$ satisfies property $P$ if the induced map $H^1(G,M)\to H^1(G,M/p)$ is onto.
			\item We say that $M$ satisfies property $Q$ if there exists a unique homomorphism $\chi:G\to \operatorname{Aut}(M)$ which satisfies property $P$.
		\end{enumerate}
	\end{defn} 
	In \cite[Section 5]{labute1966demuvskin} it was proved that:
	\begin{lem}\label{sufficient criterion for s(G)=0}
		Let $G$ be a pro-$p$ Demushkin group. The character $\chi : G \to \operatorname{Aut}(I)$ associated to $G$ has property $P$. Moreover, if there exists a  homomorphism $\chi:G\to \Z_p$ with property $P$, then $s(G)=0$. 
	\end{lem}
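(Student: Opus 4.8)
The plan is to derive both statements from duality in the top cohomological degree, which survives into arbitrary rank even though Poincaré duality does not. By Corollary \ref{open subgroups} we have $\operatorname{cd}(G)=2$, and the general theory of dualizing modules for a profinite group of finite cohomological dimension supplies the module $I$ together with natural isomorphisms $H^2(G,A)^{\vee}\cong\operatorname{Hom}_G(A,I)$ for every finite $p$-primary $G$-module $A$, where $(-)^{\vee}$ denotes Pontryagin duality. This is all I would need: the lower-degree duality isomorphisms, which do fail for uncountable $G$, never enter.

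For the first assertion I take $A=I$. If $I\cong\Q_p/\Z_p$ then $I/pI=0$ and property $P$ is vacuous, so I may assume $I\cong\Z/q\Z$ with $q=p^{h}$. Feeding $0\to pI\to I\to I/pI\to 0$ into the long exact sequence, the surjectivity of $H^1(G,I)\to H^1(G,I/pI)$ is equivalent to the injectivity of the map $H^2(G,pI)\to H^2(G,I)$ induced by $\iota\colon pI\hookrightarrow I$. Under the duality isomorphism this injectivity turns into the surjectivity of the restriction $\iota^{*}\colon\operatorname{Hom}_G(I,I)\to\operatorname{Hom}_G(pI,I)$. Since $G$ acts on $I$ through the scalars $\chi(G)\subseteq\operatorname{Aut}(I)$, every additive homomorphism out of $I$ or of $pI$ is automatically $G$-equivariant, so $\operatorname{Hom}_G(I,I)=\Z/q\Z$ and $\operatorname{Hom}_G(pI,I)\cong pI=I[p^{h-1}]$; restricting multiplication by $c$ to $pI$ realizes every element of the target, and $\iota^{*}$ is onto. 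Hence the dualizing character has property $P$.

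For the converse I would argue contrapositively: assuming $s(G)\ne 0$, i.e.\ $I\cong\Z/q\Z$, I want to show that no homomorphism $\chi\colon G\to\Z_p$ can have property $P$. The homomorphism $\chi$ factors through $\Z_p\cong 1+p\Z_p\hookrightarrow\Z_p^{\times}=\operatorname{Aut}(\Q_p/\Z_p)$ and so carries the module $M=\Q_p/\Z_p$; reducing the relevant torsion layers modulo $p$ and applying the same long exact sequence machinery, property $P$ again amounts to the absence of $p$-torsion in a second cohomology group of $G$, which through the duality isomorphism $H^2(G,-)^{\vee}\cong\operatorname{Hom}_G(-,I)$ is governed entirely by the divisibility of $I$. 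A finite $I\cong\Z/q\Z$ forces that group to be finite and hence to contain $p$-torsion, so property $P$ fails; equivalently, one may invoke the uniqueness in property $Q$ to see that a property-$P$ character into $\operatorname{Aut}(M)$ must coincide with the intrinsic dualizing character, which is impossible when $\operatorname{Aut}(I)$ is finite. Either way $s(G)=0$ follows.

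The hard part is foundational rather than computational. One must know that the dualizing module $I$, the degree-$2$ duality isomorphism, and the uniqueness statement behind property $Q$ all persist in uncountable rank. The first two I would take from the general theory of profinite groups with $\operatorname{cd}(G)=2$; for property $Q$ I would fall back on the presentation $G=\invlim_J G_J$ of Theorem \ref{inverse limit}, noting that a character with finite image factors through some finitely generated Demushkin quotient $G_J$, where \cite[Section 5]{labute1966demuvskin} applies, and that the general case assembles from these via $H^1(G,-)=\dirlim_J H^1(G_J,-)$ and the isomorphism $\operatorname{Inf}\colon H^2(G_J)\to H^2(G)$ from the proof of Theorem \ref{inverse limit}. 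The subtlety to watch is that every $G_J$ has $s(G_J)=0$, so $I$ must be transported as an external coefficient module rather than read off from the finitely generated quotients themselves.
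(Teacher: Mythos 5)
There is a genuine gap, and it sits at the center of your proposal: the two halves of your argument rest on incompatible readings of property $P$. In Part 1 you take the paper's literal definition (surjectivity of $H^1(G,M)\to H^1(G,M/pM)$) and conclude that property $P$ is vacuous when $M\cong\mathbb{Q}_p/\mathbb{Z}_p$. Under that reading, however, the second clause of the lemma becomes false rather than provable: every character $\chi\colon G\to\mathbb{Z}_p^{\times}$ would have property $P$ vacuously (the module is divisible), so every Demushkin group would have $s(G)=0$, contradicting Proposition \ref{demushkin groups with s not 0} (and already Labute's rank-$\aleph_0$ examples with $s(G)\neq 0$). The definition actually in force in this paper is the crossed-homomorphism criterion of Proposition \ref{equivalent criterion} --- every map from a minimal generating set into $M$ converging to $1$ extends to a continuous crossed homomorphism $G\to M$ --- and that condition is \emph{not} vacuous for divisible $M$; it is exactly what the paper verifies by hand (``every crossed homomorphism vanishes on $r$'') each time it applies this lemma. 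Consequently your Part 1 establishes the first clause only when $I$ is finite (the computation of $\operatorname{Hom}_G(I,I)\to\operatorname{Hom}_G(pI,I)$ there is correct), while the main case $I\cong\mathbb{Q}_p/\mathbb{Z}_p$, i.e. $s(G)=0$, is skipped as ``vacuous''; and your Part 2 contradicts your own reading, since under it the hypothesis you set out to refute carries no content.

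Part 2 also does not work as written even after fixing the definition. ``Property $P$ amounts to the absence of $p$-torsion in a second cohomology group'' is never instantiated --- which module, which map? What the crossed-homomorphism form of property $P$ for $M=\mathbb{Q}_p/\mathbb{Z}_p(\chi)$ actually yields is, for each $n$, surjectivity of $H^1(G,{}_{p^{n+1}}M)\to H^1(G,{}_{p}M)$ (a continuous cocycle has finite image, and one whose values on the generators lie in ${}_{p^{n+1}}M$ stays in ${}_{p^{n+1}}M$ by density), hence injectivity of the transition maps $H^2(G,{}_{p^{n}}M)\to H^2(G,{}_{p^{n+1}}M)$; dualizing with your $H^2(G,A)^{\vee}\cong\operatorname{Hom}_G(A,I)$ turns these into surjections $\operatorname{Hom}_G({}_{p^{n+1}}M,I)\twoheadrightarrow\operatorname{Hom}_G({}_{p^{n}}M,I)$, which for finite $I\cong\mathbb{Z}/p^{h}$ and $n\geq h$ are restrictions of the multiplication-by-$p$ map between subgroups of a cyclic group of order $p^{h}$, and so cannot be surjective onto a nonzero target for every $n$ --- that is the contradiction. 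Your framework can be completed along these lines, but none of these steps appears in your write-up. The fallback you offer is worse: invoking property $Q$ is circular in this paper, since Lemma \ref{property Q} is proved later \emph{using} the present lemma, and in any case property $Q$ asserts uniqueness of property-$P$ characters into $\operatorname{Aut}(M)$ for a fixed module $M$, so it cannot compare a $\mathbb{Z}_p^{\times}$-valued character with the dualizing character when $\operatorname{Aut}(I)$ is finite. For context, the paper itself gives no argument at all: it cites \cite[Section 5]{labute1966demuvskin} and asserts that Labute's proof extends to arbitrary rank; a self-contained, rank-independent duality proof of the kind you attempt would be a genuine improvement, but it requires the repairs above.
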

	A careful examination of the proof shows that it holds for Demushkin group of arbitrary infinite rank.
	
	The next result \cite[proposition 12]{labute1966demuvskin} applies to any pro-$p$ group $G$.
	\begin{prop}\label{equivalent criterion}
		Let $M$ be a $G$-module which is isomorphic either to $\Q_p/\Z_p$ or to $\Z/q$ for some $p$-power $q$, and let $\chi:G\to \operatorname{Aut}(M)$ be the associated character. Then $\chi$ has property $P$ if and only if any map from a minimal generating set of $G$ to $M$ converging to 1 can be extended to a crossed homomorphism $G\to M$. 
	\end{prop}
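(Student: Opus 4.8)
The plan is to translate both sides of the claimed equivalence into statements about surjectivity of natural restriction maps on $1$-cocycles, and then to bridge the gap between them by a dévissage along the filtration $M \supseteq pM \supseteq p^2M \supseteq \cdots$.

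First I set up notation. Write $G=F/R$ with $F$ free pro-$p$ on a minimal generating set $X=\{x_i\}$ and $R\subseteq \Phi(F)$. A continuous crossed homomorphism (i.e. a $1$-cocycle) $F\to M$ is freely and uniquely determined by a family of values $(m_i)$ converging to $1$; thus ``a map from $X$ to $M$ converging to $1$'' is exactly an element of $Z^1(F,M)$, and such a cocycle descends to a crossed homomorphism $G\to M$ precisely when it vanishes on $R$. Hence the extension property in the statement says exactly that the inflation $Z^1(G,M)\to Z^1(F,M)$ is onto. On the other hand, since $G$ is pro-$p$ and $\operatorname{Aut}(M/pM)$ has order prime to $p$, the action of $G$ on $\bar M:=M/pM$ is trivial, so $B^1(G,\bar M)=0$ and $H^1(G,\bar M)=Z^1(G,\bar M)=\operatorname{Hom}(G,\F_p)$; consequently property $P$ (surjectivity of $H^1(G,M)\to H^1(G,\bar M)$) is equivalent to surjectivity of the reduction $Z^1(G,M)\to \operatorname{Hom}(G,\bar M)$. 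Finally, because $R\subseteq \Phi(F)$ the quotient map induces an isomorphism $G/\Phi(G)\cong F/\Phi(F)$, whence $\operatorname{Hom}(G,\bar M)=\operatorname{Hom}(F,\bar M)=Z^1(F,\bar M)$.

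These identifications assemble into a commuting square whose top row is the inflation $Z^1(G,M)\to Z^1(F,M)$, whose bottom row is the isomorphism $Z^1(G,\bar M)\xrightarrow{\sim}Z^1(F,\bar M)$, and whose vertical arrows are reduction modulo $pM$; the right-hand reduction $Z^1(F,M)\to Z^1(F,\bar M)$ is onto because $F$ is free and convergent-to-$1$ families lift. The direction ``extension $\Rightarrow P$'' is then immediate: if the top map is onto, the left reduction is the composite of three surjections, so property $P$ holds, and this uses nothing beyond the square.

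For the converse I would argue by successive approximation along the powers of $p$. The key lemma is that property $P$ for $M$ forces the analogous surjectivity at every graded piece, i.e. that $Z^1(G,p^kM)\to \operatorname{Hom}(G,p^kM/p^{k+1}M)$ is onto for all $k$. This follows from the observation that multiplication by $p^k$ induces an isomorphism $M/pM\xrightarrow{\sim}p^kM/p^{k+1}M$ of trivial modules together with a map $Z^1(G,M)\to Z^1(G,p^kM)$, fitting into a square with the two reductions; since the composite ``reduce then multiply'' is already onto, the target reduction is onto as well. Granting this, start from an arbitrary $c\in Z^1(F,M)$: property $P$ yields $c_0\in Z^1(G,M)$ whose inflation agrees with $c$ after reduction mod $pM$, so that $c-\mathrm{infl}(c_0)\in Z^1(F,pM)$; applying the lemma at level $1$ peels off $c_1\in Z^1(G,pM)$, and iterating gives $c-\mathrm{infl}(c_0+\cdots+c_{k-1})\in Z^1(F,p^kM)$. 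When $M\cong \Z/q$ this terminates after finitely many steps, exhibiting $c$ as an inflation; the case $M\cong\Q_p/\Z_p$ is reduced to the finite case by noting that a convergent-to-$1$ family takes values in a finite characteristic submodule $\tfrac{1}{p^N}\Z/\Z$, so one works at a finite level and passes to the limit. I expect the main obstacle to be precisely the propagation lemma — the fact that property $P$, which is a condition only at the top layer $M/pM$, descends through the entire filtration — while the remaining points are the routine convergence and continuity bookkeeping needed to see that the partial sums $c_0+\cdots+c_{k-1}$ assemble into a genuine continuous cocycle on $G$.
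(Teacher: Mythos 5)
Since the paper offers no proof of Proposition \ref{equivalent criterion} (it is quoted verbatim from \cite{labute1966demuvskin}), your proposal has to be judged on its own merits. For $M\cong\Z/q$ it is correct and complete: the translation of the extension property into surjectivity of the inflation $Z^1(G,M)\to Z^1(F,M)$, the identifications $H^1(G,M/pM)=\operatorname{Hom}(G,\F_p)=\operatorname{Hom}(F,\F_p)$ (using triviality of the $G$-action on $M/pM$ and $R\subseteq\Phi(F)$), the propagation lemma for the submodules $p^kM$, and the successive approximation, which terminates because $p^nM=0$, all check out.

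The case $M\cong\Q_p/\Z_p$, however, is a genuine gap, and under your reading of property $P$ it cannot be closed, because the statement is then false. Since $\Q_p/\Z_p$ is $p$-divisible you have $\bar M=M/pM=0$, so property $P$ as you interpret it holds vacuously for every character $\chi\colon G\to\Z_p^{\times}$, and the converse direction would then assert that the extension property holds unconditionally. It does not: for $G=\Z/p$ (or $\Z_p\times\Z/p$) acting trivially on $M$, a crossed homomorphism is just a homomorphism, so no map sending a generator of order $p$ to an element of order $p^{2}$ in $\Q_p/\Z_p$ can extend, although $P$ holds vacuously. The paper itself relies on this failure: Lemma \ref{sufficient criterion for s(G)=0} together with Proposition \ref{demushkin groups with s not 0} would collapse (every pro-$p$ Demushkin group would have $s(G)=0$, via the trivial character) if property $P$ were automatic for $\Q_p/\Z_p$-coefficients. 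The precise step of yours that breaks is the last sentence, ``one works at a finite level and passes to the limit'': to run your finite-case d\'evissage inside the characteristic submodule ${}_{p^N}M$ you need property $P$ for ${}_{p^N}M$, i.e.\ surjectivity of $H^1(G,{}_{p^N}M)\to H^1(G,{}_{p^N}M/{}_{p^{N-1}}M)$, and the vacuous property $P$ for $M$ gives no access to it; your propagation lemma descends only along the modules $p^kM$, which for divisible $M$ all equal $M$, so it never reaches the finite layers. The divisible case of the proposition is true exactly when property $P$ is read as this layer-wise surjectivity for all $N$ --- the reading the paper's later applications force, and the one under which your finite-case argument, applied layer by layer, would indeed complete the proof. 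As written, your proof establishes the proposition only for finite $M$.
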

	Now we give some results regarding the dualizing module of a Demushkin group of countable rank.
	\begin{lem}\label{property Q}
		Let $G$ be a Demushkin group of arbitrary rank, and $I$ be the dualizing module of $G$, then $\operatorname{Aut}(I)$ has property $Q$.
	\end{lem}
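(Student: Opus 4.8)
By definition, property $Q$ requires the existence of a \emph{unique} character $\chi:G\to\operatorname{Aut}(I)$ with property $P$. Existence is already furnished by Lemma~\ref{sufficient criterion for s(G)=0} (the canonical character of the dualizing module has property $P$), so the entire content is uniqueness: I must show that if $\chi_1,\chi_2:G\to\operatorname{Aut}(I)$ both have property $P$, then $\chi_1=\chi_2$. Fix a presentation $G=F/\langle r^F\rangle$ with free basis $X=\{x_i\}$ and dual basis $\{\chi_i\}$ of $H^1(G)$. Since $\operatorname{Aut}(I)$ is abelian and $G$ is pro-$p$, each character takes values in the principal units, so I write $\chi(x_i)=1+p\,c_i$ and regard $\chi(\,\cdot\,)$ as a scalar in $\operatorname{End}(I)$ acting on $I$.

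The plan is to make property $P$ fully explicit. By Proposition~\ref{equivalent criterion}, $\chi$ has property $P$ iff every assignment $x_i\mapsto\alpha_i\in I$ with $\alpha_i\to 0$ extends to a crossed homomorphism $G\to I$. Any such assignment extends uniquely to a continuous crossed homomorphism $d_\chi$ of the free group $F$, and $d_\chi$ descends to $G$ exactly when $d_\chi(r)=0$ (from $d(frf^{-1})=\chi(f)\,d(r)$, using $\chi(r)=1$). By the twisted Fox calculus, $d_\chi(r)=\sum_i\chi(\partial r/\partial x_i)\,\alpha_i$, and letting the $\alpha_i$ vary independently shows that property $P$ is equivalent to $\chi(\partial r/\partial x_i)=0$ in $\operatorname{End}(I)$ for every $i$. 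Feeding in $r\equiv\prod_i x_i^{p\gamma_i}\prod_{i<j}[x_i,x_j]^{\beta_{ij}}$ and expanding, one gets $\chi(\partial r/\partial x_i)=p\bigl(\gamma_i-\sum_j b_{ij}c_j\bigr)+O(p^2)$, where $b_{ij}=\chi_i\cup\chi_j$ is the antisymmetric cup-product matrix.

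Uniqueness then follows from nondegeneracy plus a $p$-adic bootstrap. Given two solutions with parameters $c^{(1)},c^{(2)}$, subtracting their systems and extracting the leading term yields $\sum_j b_{ij}\bigl(c^{(1)}_j-c^{(2)}_j\bigr)\equiv 0\pmod p$ for all $i$. The decisive point for infinite rank is that continuity of the characters (the values tend to $1$) forces $c^{(1)}_j-c^{(2)}_j\equiv 0\pmod p$ for all but finitely many $j$; hence $w:=\sum_j\bigl(c^{(1)}_j-c^{(2)}_j\bigr)\chi_j$ is a genuine finite-support element of $H^1(G)$, and the displayed relations say precisely that $\varphi(\chi_i,w)=0$ for every $i$, i.e. $w$ lies in the radical of the cup product. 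Since this form is nondegenerate (the Demushkin hypothesis, via Proposition~\ref{locally nondegenerate}), we get $w=0$, so $c^{(1)}\equiv c^{(2)}\pmod p$. Running the identical argument at each higher $p$-adic level—where the lower-order contributions to $d_{\chi_1}(r)$ and $d_{\chi_2}(r)$ already agree by induction, leaving only the linear-in-$b_{ij}$ part—upgrades this to agreement modulo $p^k$ for all $k$, and completeness of $\Z_p$ gives $\chi_1=\chi_2$.

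The main obstacle is the legitimacy of this bootstrap: one must verify that the nonlinear higher-order corrections to $\chi(\partial r/\partial x_i)$ depend $p$-adically Lipschitz-continuously on the $c_j$, so that once two solutions agree modulo $p^k$ the discrepancy is of order $p^{k+1}$ and the antisymmetric form $b$ again pins down the next digit, and that continuity keeps the relevant difference vectors finitely supported at every stage—this is exactly what allows the possibly uncountable-dimensional nondegeneracy hypothesis to be invoked. The remaining delicate case is $p=2$: there $\operatorname{Aut}(I)=\Z_2^\times=\{\pm1\}\times(1+4\Z_2)$ carries an extra sign component, the cup-product form need not be alternate, and the diagonal terms $\chi_i\cup\chi_i$ intervene. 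I would run the same scheme while tracking the $\{\pm1\}$-component separately and appealing to the non-alternate nondegenerate form structure of Theorem~\ref{forms over F_2}, which is precisely where the invariant $t(G)$ enters.
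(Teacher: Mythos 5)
Your reduction of property $P$ to the vanishing of the twisted Fox derivatives $\chi(\partial r/\partial x_i)$ is sound, and the first step of your uniqueness argument (agreement of two property-$P$ characters modulo $p^2$, via finite support from continuity plus nondegeneracy of the cup product) is correct for odd $p$. But there are two genuine gaps, both of which you flag yourself, and neither of which is a routine verification. First, the bootstrap: to pass from agreement mod $p^k$ to mod $p^{k+1}$ you need that $\chi_2(\partial r/\partial x_i)-\chi_1(\partial r/\partial x_i)$ is, to leading order, $p^k$ times a linear form in the new digits whose matrix reduces mod $p$ to the cup-product matrix $(b_{ij})$. Unwinding this, that matrix is $\sum_g n_g\,\chi_1(g)\,m_j(g)$ (writing $\partial r/\partial x_i=\sum_g n_g g$ with $m_j$ the exponent sum), i.e.\ a $\chi_1$-twisted second Fox derivative, and identifying it mod $p$ with $(b_{ij})$ requires controlling the terms of $r$ lying in the third dimension subgroup (contributions of $\gamma_3 F$, $(\gamma_2 F)^p$, $F^{p^2}$, and diagonal binomial coefficients) via a Magnus-expansion analysis. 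That analysis is the actual content of uniqueness beyond the first digit; declaring that "one must verify" the corrections are $p$-adically Lipschitz is naming the theorem, not proving it. Second, $p=2$ is not handled at all: your opening ansatz $\chi(x_i)=1+p c_i$ is false there, since a pro-$2$ group can map onto $\{\pm 1\}\subset\Z_2^{\times}$, the cup product need not be alternate, and diagonal terms enter the Fox calculus. The lemma is invoked later in the paper precisely for pro-$2$ Demushkin groups, so this case cannot be deferred to "I would run the same scheme."

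For contrast, the paper avoids all higher-order analysis by a different route: using Theorem \ref{inverse limit} it writes any property-$P$ character $\sigma:G\to\operatorname{Aut}(I)$ as an inverse limit of characters $\sigma_i:G_i\to(\Z/q_i)^{\times}$ on finitely generated Demushkin quotients $G_i$, checks via Proposition \ref{equivalent criterion} that each $\sigma_i$ inherits property $P$, and then uses the Demushkin--Labute normal form $r=x_1^{q}[x_1,x_2]\cdots[x_{2n-1},x_{2n}]$ (or $r=[x_1,x_2]\cdots[x_{2n-1},x_{2n}]$) to compute the unique property-$P$ character of a finitely generated Demushkin group explicitly; the normal form makes the resulting system triangular, so uniqueness is exact and immediate. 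Your approach trades that classification for a general-relator successive-approximation argument --- a legitimate idea, and likely completable for odd $p$ --- but the two missing verifications are exactly where the work lies, so as written this is a strategy, not a proof.
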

	\begin{proof}
		Let $\chi:G\to \operatorname{Aut}(I)$ be the associated character. Then $\chi$ has property $P$. Expressing $\operatorname{Aut}(I)$ as an inverse limit of finite groups $\Z/q_i^{\times}$, one can express $\chi$ as the inverse limit over some set $I$ of homomorphisms $\chi_i:G_i\to \Z/q_i^{\times}$ were $G_i$ are finitely generated Demushkin groups. We claim that $\chi_i:G_i\to \Z/q_i$ satisfies property $P$. For this we will use the equivalent criterion in Proposition \ref{equivalent criterion}. Indeed, let $x_1,...,x_n$ be a basis of $G_i$. We can lift it to a basis $\{x_i'\}$ of $G$ such that $x_i'\to x_i$ if $i=1,..,n$ and $1$ otherwise. Recall that $\chi$ has property $P$. Now for every choice of elements $D(x_i)$, we can set $D(x_i')=D(x_i)$ if $1\leq i\leq n$ and $0$ otherwise, and get a crossed homomorphism $D:G\to J\to J_i$ that splits through $G_i$. The same proof shows, in fact, that every homomorphism $\sigma:G\to \operatorname{Aut}(I)$ which has property $P$ can be presented as an inverse limit of homomorphisms $\sigma_i:G_i\to \Z/q_i^{\times}$ having property $P$. Thus it is enough to prove that $\Z/q$ has property $Q$ for every finitely generated Demushkin group. We will prove it for $p\ne2$, the proof for $p=2$ is similiar, considering the possible forms of the 1-relator $r$. Recall that for $p\ne2$, a finitely generated pro-$p$ Demushkin group has the form $G/r$ for $r=x_1^q[x_1,x_2]\cdots [x_{2n-1},x_{2n}]$ or $r=[x_1,x_2]\cdots [x_{2n-1},x_{2n}]$. Taking $D_i$ to be the characters defined by $D_i(x_j)=\delta_{ij}$, one concludes that $\chi(x_i)=0$ for all $i\ne 2$ and $\chi(x_2)=(1-q)^{-1}\lor 0$ correspondingly. Hence, we are done.
	\end{proof}
	Now we determine the possible invariants $\Img(\chi)$ for the Demushkin groups $G$ with $q(G)\ne 2$.
	\begin{prop}\label{p=image}
		Let $\chi: G\to \mathrm{Aut}(I)=(\Z_p/s(G)\Z_p)^{\times}$ be the character of $G$ associated to the dualizing module. Then, if $q(G)\ne 2$, $\operatorname{Im}(\chi)=1+q(G)\Z_p/s(G)\Z_p$.
	\end{prop}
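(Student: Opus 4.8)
The plan is to reduce the assertion to the finitely generated case---where the image of the dualizing character is already computed in Serre's and Labute's classification---and then recover the general answer as an inverse limit. By Theorem \ref{inverse limit} we may write $G=\invlim_J G_J$ as an inverse limit of finitely generated Demushkin quotients $G_J=F'/\langle r'^{F'}\rangle$ constructed there, with surjective transition maps. Exactly as in the proof of Lemma \ref{property Q}, the character $\chi$ has property $P$ (Lemma \ref{sufficient criterion for s(G)=0}), and writing $\operatorname{Aut}(I)=\invlim_J(\Z_p/q_J\Z_p)^{\times}$ it decomposes as $\chi=\invlim_J\chi_J$, where each $\chi_J:G_J\to(\Z_p/q_J\Z_p)^{\times}$ is the unique property-$P$ character of $G_J$ supplied by property $Q$ and Proposition \ref{equivalent criterion}. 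Because all the groups involved are profinite and the transition maps are surjective, a compactness argument gives $\operatorname{Im}(\chi)=\invlim_J\operatorname{Im}(\chi_J)$, so it suffices to identify each $\operatorname{Im}(\chi_J)$ and then pass to the limit.

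For the finitely generated factors I would invoke the classical computation. For $q\ne 2$ a finitely generated Demushkin group has relator $r=x_1^{q}[x_1,x_2][x_3,x_4]\cdots[x_{2n-1},x_{2n}]$ (or the bare product of commutators when $q=0$), and the same crossed-homomorphism evaluation carried out in the proof of Lemma \ref{property Q}---namely $\chi(x_i)=0$ for $i\ne 2$ and $\chi(x_2)=(1-q)^{-1}$ in the logarithmic picture---shows that $\operatorname{Im}(\chi_J)$ is the closed subgroup topologically generated by $1+q$ inside $(\Z_p/q_J\Z_p)^{\times}$. The role of the hypothesis $q\ne 2$ is precisely that $1+q$ topologically generates $1+q\Z_p\cong\Z_p$ for $p$ odd and for $q=2^h$ with $h\ge 2$, whereas for $q=2$ the group $1+2\Z_2=\Z_2^{\times}$ fails to be procyclic; hence $\operatorname{Im}(\chi_J)=1+q(G_J)\Z_p/q_J\Z_p$.

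The step I expect to be the main obstacle is guaranteeing that the finitely generated quotients in the tower all carry the same $q$-invariant, $q(G_J)=q(G)=q$, so that the computation above applies uniformly. Writing $r\equiv\prod_i x_i^{q\alpha_i}\prod_{i<j}[x_i,x_j]^{\beta_{ij}}$ modulo $[[F,F],F]$, maximality of $q$ forces some coefficient $\alpha_{i_0}$ to be a unit in $\Z_p$; I would therefore arrange, when applying Proposition \ref{locally nondegenerate} in the construction of Theorem \ref{inverse limit}, that the finite set $J$ contains this generator $x_{i_0}$. Then under the defining epimorphism $\varphi:F\to F'$ the term $x_{i_0}^{q\alpha_{i_0}}$ survives in $r'=\varphi(r)$, so that $r'\in F'^{q}[F',F']$ with $q$ still maximal and $q(G_J)=q$. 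With $q(G_J)=q$ for all (cofinally many) $J$, the previous step gives $\operatorname{Im}(\chi_J)=1+q\Z_p/q_J\Z_p$, and since $s(G)\Z_p=\bigcap_J q_J\Z_p$ we conclude $\operatorname{Im}(\chi)=\invlim_J\big(1+q\Z_p/q_J\Z_p\big)=1+q(G)\Z_p/s(G)\Z_p$, as required.
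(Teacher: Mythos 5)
Your proposal is correct and follows essentially the same route as the paper: express $G$ as an inverse limit of finitely generated Demushkin quotients, use property $P$ (and the uniqueness coming from property $Q$) to write $\chi=\invlim_J\chi_J$, identify each $\operatorname{Im}(\chi_J)$ from the standard form of the relator when $q\ne 2$, and pass to the limit. The only real difference is in one technical step: where the paper disposes of the invariance $q(G_J)=q(G)$ with a brief appeal to $G/G'=\invlim G_J/G_J'$, you arrange it explicitly by forcing $J$ to contain a generator $x_{i_0}$ whose coefficient in $r$ is $q$ times a unit --- a legitimate (indeed more careful) way to obtain the same cofinal subsystem, though your phrase ``the term $x_{i_0}^{q\alpha_{i_0}}$ survives'' should really be justified by pairing $\bar r$ against $x_{i_0}^*\in J^*\subseteq L$, since $\varphi$ is defined with respect to the new basis $B_1\cup B_2$ rather than coordinatewise on $X$.
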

	\begin{proof}
		Express $G=\invlim G_i$ as an inverse limit of finitely gnerated Demushkin groups. Since $G/G'=\invlim G_i/G_i'$ we get that if $q(G)\ne 0$, then we can assume that for every $i$, $q(G_i)=q(G)$. Otherwise $q(G_i)$ can be 0 for all $i$, or can be arbitrary large. Express  $\chi$ as an inverse limit $\chi_i:G_i\to \Z/q_i^{\times}$ where $G_i$ are finitely generated Demushkin groups. It is enough to show that for every $i$, $\Img(\chi_i)=1+q(G_i)\Z_p/s(G)  $, then the result follows from standard inverse limit argument. Indeed, as $\chi$ satisfies property $P$, so is $\chi_i$ for every $i$. Hence, as in the proof of Lemma \ref{property Q}, $\chi_i$ must have the form  $x_i=0$ for all $i\ne 2$ and $x_2=(1-q)^{-1}\lor 0$ correspondingly to the value of $s(G)$. Hence, $\Img(\chi_i)=1+q(G_i)\Z_p/s(G)\Z_p $
	\end{proof}
	In the countable case a pro-$p$ Demushkin group is completely determined by $q(G)$ and $s(G)$ whenever $q(G)\ne 2$. This is far from being the case in uncountable rank.
	\begin{prop}\label{group for every form}
		Let $p$ be a prime. For every nondegenerate alternate bilinear form $(V,\varphi)$ over $\F_p$ and $q\ne 2$ equals to power of $p$ or 0, there is a Demushkin group $G$ with $q(G)=q$ whose cup-product bilinear form $H^1(G)\cup H^1(G)\to H^2(G)$ is isomorphic to $(V,\varphi)$, and $s(G)=0$
	\end{prop}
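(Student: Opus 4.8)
The plan is to realize $G$ directly as a one-relator quotient and then recover it as an inverse limit of the finitely generated Demushkin groups attached to the finite-dimensional nondegenerate subspaces of $(V,\varphi)$, so that the whole statement reduces to Labute's finite-rank classification together with the projective-limit results already established. Fix a basis $\{e_\alpha\}_{\alpha\in\Lambda}$ of $V$ and a well-ordering of $\Lambda$, let $F$ be the free pro-$p$ group on a set $\{x_\alpha\}_{\alpha\in\Lambda}$ converging to $1$, and identify $H^1(F)=V$ via the dual basis. Since $\varphi$ is nondegenerate there are indices $\alpha_0,\beta_0$ with $\varphi(e_{\alpha_0},e_{\beta_0})\ne 0$. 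I would set
\[ r=x_{\alpha_0}^{q}\prod_{\alpha<\beta}[x_\alpha,x_\beta]^{\varphi(e_\alpha,e_\beta)} \]
(dropping $x_{\alpha_0}^{q}$ when $q=0$) and $G=F/\langle r^F\rangle$. The first point is convergence of the product: modulo any open normal $N\trianglelefteq F$ all but finitely many $x_\alpha$ lie in $N$, hence all but finitely many commutator factors lie in $N$, so $r$ is a well-defined element of $\Phi(F)\setminus\{1\}$; reading $r$ modulo $[F,F]$ gives $q(G)=q$ at once.

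Next I would identify the cohomology through finite retracts. For each finite $S\subset\Lambda$ with $\alpha_0\in S$ and $U_S=\Span\{e_\alpha:\alpha\in S\}$ nondegenerate — such $S$ are cofinal by Proposition \ref{locally nondegenerate} — the retraction $F\to F_S$ killing the generators outside $S$ sends $r$ to the finite relator carrying the form $\varphi|_{U_S}$, and yields a finitely generated quotient $G_S$ which, by the cup-product interpretation and Labute's classification, is the Demushkin group with cup form $(U_S,\varphi|_{U_S})$ and $q(G_S)=q$. Exactly as in the proof of Theorem \ref{inverse limit} the kernels of $G\to G_S$ have trivial intersection, so $G=\invlim_S G_S$. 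By the proposition that a projective limit of Demushkin groups is either Demushkin or free, $G$ is one of the two; and since the cup product commutes with the direct limit $H^1(G)=\dirlim_S U_S=V$, it equals $\dirlim_S\varphi|_{U_S}=\varphi$, which is nonzero. This rules out the free case, so $G$ is Demushkin with cup form isomorphic to $(V,\varphi)$.

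Finally, to obtain $s(G)=0$ I would exhibit the dualizing character explicitly. Define $\chi\colon G\to\Z_p^\times$ by $\chi(x_{\beta_0})=(1-q)^{-1}$ and $\chi(x_\alpha)=1$ for $\alpha\ne\beta_0$; this is well defined since $\chi(r)=\chi(x_{\alpha_0})^{q}=1$, and its image lies in $1+q\Z_p\subseteq 1+p\Z_p\cong\Z_p$. By construction $\chi$ is the inverse limit of the dualizing characters $\chi_S$ of the $G_S$, each of which satisfies property $P$ and has image $1+q\Z_p/s(G)\Z_p$ by Lemma \ref{sufficient criterion for s(G)=0} and Proposition \ref{p=image}. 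Property $P$ passes to the inverse limit by the same extension-of-crossed-homomorphisms argument used in the proof of Lemma \ref{property Q} (through the criterion of Proposition \ref{equivalent criterion}): a generator map converging to $1$ is extended at each finite level and the extensions are assembled into a global crossed homomorphism. Hence $\chi$ has property $P$, and Lemma \ref{sufficient criterion for s(G)=0} gives $s(G)=0$.

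I expect the main obstacle to be precisely this last step — making ``$\chi=\invlim_S\chi_S$ has property $P$'' rigorous in uncountable rank — since property $P$ is a statement about extending \emph{arbitrary} generator maps converging to $1$ to crossed homomorphisms, and one must verify that such a global extension can be assembled compatibly from the finite levels using that each $x_\alpha$ is eventually trivial modulo any open subgroup. The convergence of the defining product and the simultaneous identification of the cup form with $\varphi$ on all of $V$ are comparatively routine once the inverse-limit presentation $G=\invlim_S G_S$ is in place.
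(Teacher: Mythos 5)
The construction itself, the convergence argument, $q(G)=q$, the presentation $G=\invlim_S G_S$, and the identification of the cup form are all workable (with one small patch: to rule out the free case you must know the transition maps $H^2(G_S)\to H^2(G_T)$ are isomorphisms, which you get by also insisting $\beta_0\in S$, so that the nonzero cup product $\varphi(e_{\alpha_0},e_{\beta_0})$ survives at every finite level). The genuine gap is exactly the step you flagged, $s(G)=0$, and as written it cannot be repaired, because you chose an \emph{arbitrary} basis. For a crossed homomorphism $D$ on $F$ with action through a character $\theta$ one has $D([x,y])=\theta(x)^{-1}\bigl(\theta(y)^{-1}-1\bigr)D(x)+\theta(y)^{-1}\bigl(1-\theta(x)^{-1}\bigr)D(y)$; applying this to your $r$ and your $\chi$ (so $\theta(x_{\beta_0})=(1-q)^{-1}$, $\theta=1$ elsewhere) gives, for $q\ne0$,
\[
D(r)=q\bigl(1-\varphi(e_{\alpha_0},e_{\beta_0})\bigr)D(x_{\alpha_0})+q\sum_{\gamma\ne\alpha_0,\beta_0}\varphi(e_{\beta_0},e_\gamma)\,D(x_\gamma).
\]
By Proposition \ref{equivalent criterion}, property $P$ requires every coefficient to vanish, i.e.\ the exponent $\varphi(e_{\alpha_0},e_{\beta_0})$ must equal $1$ and $e_{\beta_0}$ must be orthogonal to \emph{every} basis vector other than $e_{\alpha_0}$; an arbitrary basis violates this, so your $\chi$ does not have property $P$. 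Your fallback, that $\chi$ is the inverse limit of the dualizing characters $\chi_S$ of the $G_S$, is also false: dualizing characters are not compatible with the quotient maps $G_T\to G_S$. For instance, with exponents $\varphi_{12}=\varphi_{23}=\varphi_{34}=1$ and all other pairs orthogonal, the property-$P$ character of $G_{\{1,2,3,4\}}$ (unique by Lemma \ref{property Q}) sends $x_4\mapsto(1-q)^{-1}$, whereas the pullback of the one of $G_{\{1,2\}}$ sends $x_4\mapsto1$.

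Moreover the defect lies in the construction, not merely in its verification: in infinite rank the isomorphism type of $G$, and in particular $s(G)$, depends on the chosen basis and not only on $(V,\varphi)$ and $q$. Take $p$ odd, a ``tridiagonal'' basis $\{e_i\}_{i\in\omega}$ with $\varphi(e_i,e_{i+1})=1$ and all other pairs orthogonal (orthogonally completed to dimension $\mu$ by any nondegenerate alternate form), and $\alpha_0=1$, $\beta_0=2$. For \emph{any} continuous $\theta:G\to\Z_p^{\times}$, the vanishing of the coefficient of $D(x_\gamma)$, $\gamma\geq2$, reads $\theta_{\gamma+1}^{-1}=\theta_{\gamma-1}^{-1}$, while the coefficient of $D(x_1)$ forces $\theta_2^{-1}-1=-\theta_1^{1-q}\bigl(1+\theta_1+\dots+\theta_1^{q-1}\bigr)\ne0$; hence $\theta_2=\theta_4=\theta_6=\dots$ is a fixed element different from $1$, contradicting continuity ($\theta(x_\alpha)\to1$). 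So for this basis \emph{no} character into $\Z_p^{\times}$ has property $P$, and therefore $s(G)\ne0$ by Lemma \ref{sufficient criterion for s(G)=0}. This is precisely why the paper's proof begins by \emph{adapting} the basis: using Lemma \ref{Projections}, write $V=\Span\{v_1,v_2\}\perp\Span\{v_1,v_2\}^{\perp}$ with $\varphi(v_1,v_2)=1$ and take the basis accordingly, so the relator becomes $x_1^q[x_1,x_2]\cdot w$ with $w$ a product of commutators not involving $x_1,x_2$; then for $\theta(x_2)=(1-q)^{-1}$, $\theta(x_i)=1$ otherwise, the contributions $qD(x_1)$ and $D([x_1,x_2])=-qD(x_1)$ cancel and $w$ contributes nothing, so $D(r)=0$ for every $D$ and Lemma \ref{sufficient criterion for s(G)=0} yields $s(G)=0$. (For $q=0$ your argument is fine as it stands: the trivial character has property $P$ for any basis.)
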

	\begin{proof}
		Let $(V,\varphi)$ be a nondegenerate alternate bilinear form. Let $0\ne v_1\in V$. Since $\varphi(v_1,v_1)=0$, there exists some $v_2\ne v_1$ such that $\varphi(v_1,v_2)\ne 0$, We can assume $\varphi(v_1,v_2)=1$, so by alternating $\{v_1,v_2\}$ is a symplectic basis to $V'=\Span\{v_1,v_2\}$ and in particular $V'$ is nondegenerate. By Lemma \ref{Projections} we can complete $\{x_1,x_2\}$ to a basis of $V$ by taking a basis for the orthogonal complement of $V'$. Now we construct a Demushkin group as follows: Let $F$ be a free pro-$p$ group over $\{x_i\}_{i\in \dim V}$, and define $r=x_1^q[x_1,x_2]\prod_{1\ne i<j} [x_i,x_j]^{\alpha_{ij}}$ or $r=[x_1,x_2]\prod_{1\ne i<j} [x_i,x_j]^{\alpha_{ij}}$ for $\alpha_{ij}=\varphi(v_i,v_j)$ for all $1\ne 1<j$. Then the bilinear form of $(H^1(G),\cup)$ is isomorphic to $(V,\varphi)$ and $q(G)=q$. We will show that $s(G)=0$. By Lemma \ref{sufficient criterion for s(G)=0}, it suffices to build a homomorphism $\sigma:G\to \mathbb{Z}_p^{\times}$ with property $P$. We do it by setting $\theta(x_2)=(1-q)^{-1}$ and $\theta(x_i)=1$ for all $i\ne 2$ in the first case, and the trivial map in the second case. One checking that every crossed homomorphism $D:F\to \mathbb{Z}_p$  vanishes on $r$, and hence property $P$ is satisfied.
	\end{proof}
	Proposition \ref{group for every form} and Theorem \ref{maximal number of forms} together imply:
	\begin{thm}
		Let $p$ be a prime number. For every cardinal $\mu>\aleph_0$ and $q\ne 2$ equals to a power of $p$ or 0, there are $2^{mu}$ pairwise nonisomorphic pro-$p$ Demushkin groups $G$ with $q(G)=q$ and $s(G)=0$. 
	\end{thm}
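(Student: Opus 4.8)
The plan is to obtain the $2^\mu$ groups by realizing, via Proposition~\ref{group for every form}, a maximal family of pairwise nonisometric cup-product forms supplied by Theorem~\ref{maximal number of forms}, and then to argue that the isometry type of the cup product is an isomorphism invariant, so that distinct forms force distinct groups. Concretely, I would first apply Theorem~\ref{maximal number of forms} with $\F=\F_p$ to obtain a family $\{(V_\lambda,\varphi_\lambda)\}_{\lambda\in\Lambda}$ of $2^\mu$ pairwise nonisometric nondegenerate alternate bilinear forms, each of dimension $\mu$ over $\F_p$. For each $\lambda$, Proposition~\ref{group for every form} produces a pro-$p$ Demushkin group $G_\lambda$ with $q(G_\lambda)=q$ and $s(G_\lambda)=0$ whose cup-product form $H^1(G_\lambda)\cup H^1(G_\lambda)\to H^2(G_\lambda)$ is isometric to $(V_\lambda,\varphi_\lambda)$; since $\dim H^1(G_\lambda)=\dim V_\lambda=\mu$, each $G_\lambda$ has rank $\mu$.

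The key step is to verify that $G_\lambda\cong G_{\lambda'}$ implies $(V_\lambda,\varphi_\lambda)\cong(V_{\lambda'},\varphi_{\lambda'})$. This is where the whole argument turns: an isomorphism of pro-$p$ groups $G_{\lambda'}\xrightarrow{\sim}G_\lambda$ induces, by contravariant functoriality of continuous cohomology with $\F_p$-coefficients, isomorphisms $H^1(G_\lambda)\xrightarrow{\sim}H^1(G_{\lambda'})$ and $H^2(G_\lambda)\xrightarrow{\sim}H^2(G_{\lambda'})$, and naturality of the cup product forces these to intertwine the two pairings. Hence the two cup-product forms are isometric, and by our choice of family this can happen only for $\lambda=\lambda'$. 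Therefore the groups $\{G_\lambda\}_{\lambda\in\Lambda}$ are pairwise nonisomorphic, yielding $2^\mu$ pairwise nonisomorphic pro-$p$ Demushkin groups with the prescribed invariants $q(G)=q$ and $s(G)=0$.

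Finally, I would remark that $2^\mu$ is in fact optimal, since a standard cardinality count bounds the total number of isomorphism classes of pro-$p$ groups of rank $\mu$ by $2^\mu$. I do not expect any serious obstacle here, since both Proposition~\ref{group for every form} and Theorem~\ref{maximal number of forms} are already in hand; the only point demanding genuine care is the naturality argument of the second paragraph, which certifies that the cup-product form is an invariant of the abstract group $G_\lambda$ and not merely of the presentation used to construct it.
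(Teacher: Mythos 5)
Your proposal is correct and follows essentially the same route as the paper: the paper likewise combines Theorem~\ref{maximal number of forms} (yielding $2^{\mu}$ pairwise nonisometric alternate forms over $\F_p$) with Proposition~\ref{group for every form}, and then notes that $G_1\cong G_2$ forces an isometry $(H^1(G_1),\cup)\cong(H^1(G_2),\cup)$ --- precisely the naturality argument you spell out, which the paper dismisses as ``straightforward.'' Your extra remark on the optimality of $2^{\mu}$ is harmless but not needed.
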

	\begin{proof}
		The only thing left to observe is that $G_1\cong G_2$ implies isometric of the bilinear forms $(H^1(G_1),\cup)\cong (H^1(G_2),\cup)$ but this is straightforward.
	\end{proof}
	For the case of $p=2$ and $q(G)=2$ we have an analogue result. First we give a list of the subgroups of $\Z_2^{\times}\cong \{\pm1\}\times (1+4 \Z_2)$. Let $f\geq 2$ be some integer or $\infty$ and define $2^{\infty}=0$. Set $U^{(f)}_2=\langle 1+2^f\rangle$ and $U^{[f]}_2=\langle -1+2^f\rangle$. Then every subgroup equals to one of the following options for some $f=2^h, h\in \mathbb{N}\cup \{\infty\}$.
	\begin{enumerate}
		\item $U^{(f)}_2$.
		\item $U^{[f]}_2$.
		\item $\{\pm 1\}\times U^{(f)}_2$.
	\end{enumerate}
	\begin{thm}
		For every  uncountable cardinality $\mu$, $t\in \{-1,0,1\}$, and $f\geq 2$, there are  $2^{\mu}$ pairwise nonisomorphic pro-$2$ Demushkin groups with $q(G)=2,s(G)=0$ and the following pairs of invariants, where $t(G)$ denotes $t(\cup)$ for the cup product bilinear form $H^1(G)\cup H^1(G)\to\F_2$ :
		\begin{itemize}
			\item $t(G)=1$,  $\operatorname{Im}(\chi)=U^{[f]}_2$.
			\item $t(G)=1$,  $\operatorname{Im}(\chi)=\{\pm 1\}\times U^{(f)}_2$.
			\item $t(G)=-1$, $\operatorname{Im}(\chi)=\{\pm 1\}\times U^{(f)}_2$.
		\end{itemize} 
		
	\end{thm}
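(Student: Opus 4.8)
The plan is to run exactly the argument that produced the $q\neq 2$ theorem (Proposition \ref{group for every form} together with the counting theorem that followed it), but feeding in the non-alternate forms supplied by Theorem \ref{forms over F_2} in place of the alternate forms of Hall. I will fix one of the three rows, so I fix $t\in\{1,-1\}$, the prescribed image $B$ (either $U^{[f]}_2$ or $\{\pm1\}\times U^{(f)}_2$) and $f\geq 2$. For each nondegenerate non-alternate skew-symmetric form $(V,\varphi)$ over $\F_2$ with $\dim V=\mu$ and $t(\varphi)=t$, I will build a pro-$2$ Demushkin group $G_\varphi$ whose cup-product form is $\varphi$, with $q(G_\varphi)=2$, $s(G_\varphi)=0$ and $\Img(\chi)=B$. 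Theorem \ref{forms over F_2} provides $2^\mu$ pairwise nonisometric such $\varphi$, and since an isomorphism $G_\varphi\cong G_{\varphi'}$ induces an isometry of cup-product forms $\varphi\cong\varphi'$ (exactly the observation used after Proposition \ref{group for every form}), the resulting groups are pairwise nonisomorphic; this gives the required $2^\mu$ groups in each row.

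The construction rests on a \emph{normal form} for the input: every nondegenerate non-alternate skew-symmetric form over $\F_2$ with $t(\varphi)=t\in\{1,-1\}$ splits as an orthogonal sum $V=W_0\perp V_1$, where $W_0$ is a fixed finite non-alternate ``core'' and $\varphi|_{V_1}$ is alternate and nondegenerate. Writing $\beta(v)=\varphi(v,v)$, which is a linear functional since the cross terms vanish in characteristic $2$, and $A=\ker\beta$: when $t=-1$ one picks $w\in A^\perp\setminus A$, so $\varphi(w,w)=1$ and $w^\perp=A$ (both are hyperplanes with $A\subseteq w^\perp$); then $\Span\{w\}$ is nondegenerate and $V=\Span\{w\}\perp A$ with $\varphi|_A$ alternate, so $W_0=\Span\{w\}$ has Gram matrix $(1)$. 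When $t=1$ one picks $0\neq w\in A^\perp\subseteq A$ and a vector $u$ with $\varphi(u,u)=1$, $\varphi(w,u)=1$; then $W_0=\Span\{w,u\}$ is nondegenerate with Gram matrix $\left(\begin{smallmatrix}0&1\\1&1\end{smallmatrix}\right)$ and, by Lemma \ref{Projections}, $V=W_0\perp W_0^{\perp}$ with $\varphi|_{W_0^\perp}$ alternate. In both cases the ``bulk'' $V_1$ is an alternate nondegenerate form of dimension $\mu$, and this is the part that carries the $2^\mu$-fold variation.

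With a basis $\{v_i\}$ of $V$ adapted to this splitting I then mimic Proposition \ref{group for every form}, encoding the whole Gram matrix of $\varphi$ into a single relator of a free pro-$2$ group $F$ on $\{x_i\}$: the off-diagonal entries go into commutator exponents $[x_i,x_j]^{\varphi(v_i,v_j)}$ and the (now possibly nonzero) diagonal entries into squares $x_i^{2}$, while a core factor produces $q=2$ and controls $\chi$. Concretely, for the $t=1$, $\Img(\chi)=U^{[f]}_2$ row I take core factor $x_1^{2+2^f}[x_1,x_2]$; for the two rows with $\Img(\chi)=\{\pm1\}\times U^{(f)}_2$ I take core factor $x_1^{2}[x_1,x_2]$ (if $t=1$) or the isolated square $x_w^{2}$ (if $t=-1$), and in addition I insert a block $x_3^{2^f}[x_3,x_4]$ on a symplectic pair of $V_1$. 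Crucially, since $2^f\equiv 0\pmod 4$ the factor $x_3^{2^f}$ leaves the cup-product form unchanged, as it only touches $\chi_3\cup\chi_3=0$; likewise $2+2^f\equiv 2\pmod 4$, so $x_1^{2+2^f}$ still contributes $\chi_1\cup\chi_1=1$. Hence in every case the cup form of $G_\varphi$ is isometric to $\varphi$, whence $t(G_\varphi)=t$, and $q(G_\varphi)=2$ because the least power occurring is $2$.

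It remains to pin down $\Img(\chi)$ and $s(G)$, and this is where the main work lies. Following Proposition \ref{group for every form} I will exhibit a homomorphism $\theta:G_\varphi\to\Z_2^\times$ with image exactly $B$ and verify that it has property $P$ via Proposition \ref{equivalent criterion}, by checking that every $\theta$-crossed homomorphism $F\to\Z_2$ vanishes on the relator. The value of $\theta$ is read off the core: $\theta(x_2)=(1-(2+2^f))^{-1}=(-1-2^f)^{-1}$, so that $\Img(\theta)=\langle -1-2^f\rangle=U^{[f]}_2$ in the first row, while $\theta(x_2)=(1-2)^{-1}=-1$ (or $\theta(x_w)=-1$, which forces the isolated $x_w^2$ to contribute $0$ because $1+\theta(x_w)=0$) together with $\theta(x_4)=(1-2^f)^{-1}$ generating $U^{(f)}_2$ gives $\{\pm1\}\times U^{(f)}_2$. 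Since $B$ is infinite, Lemma \ref{sufficient criterion for s(G)=0} yields $s(G_\varphi)=0$, so the dualizing module is $\Q_2/\Z_2$ and $\operatorname{Aut}(I)=\Z_2^\times$; then property $Q$ (Lemma \ref{property Q}) identifies $\theta$ with the dualizing character $\chi$, giving $\Img(\chi)=B$. \textbf{The delicate point} is the property-$P$ verification over the \emph{uncountable} relator: one must confirm that the infinite commutator bulk $\prod_{i<j}[x_i,x_j]^{\varphi(v_i,v_j)}$ contributes nothing to $\theta$, so that $\chi$ is trivial on those generators, and that the finitely many controlling blocks combine consistently, exactly as in the finite-rank classification but now uniformly across all $\mu$ generators. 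The $p=2$ arithmetic of $\Z_2^\times$ — separating the torsion factor $\{\pm1\}$ from the level-$f$ factor $U^{(f)}_2$, and the identity $\langle -1-2^f\rangle=U^{[f]}_2$ — is precisely what makes the three rows genuinely distinct.
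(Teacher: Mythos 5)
Your proposal is correct and takes essentially the same route as the paper's own proof: split the non-alternate form (using Lemma \ref{Projections}) into a finite non-alternate core orthogonal to an alternate bulk, encode the Gram matrix into a single relator whose core factors ($x_1^{2+2^f}[x_1,x_2]$ for the $U_2^{[f]}$ row, $x_1^{2}[x_1,x_2]$ respectively $x_w^{2}$ together with a $x_3^{2^f}[x_3,x_4]$-block for the $\{\pm 1\}\times U_2^{(f)}$ rows) fix $q(G)=2$, $t(G)$ and the character, verify property $P$ by checking crossed homomorphisms vanish on the relator, and count via Theorem \ref{forms over F_2}. The only difference is cosmetic and in your favor: you spell out the appeal to property $Q$ (Lemma \ref{property Q}) to identify the exhibited homomorphism $\theta$ with the dualizing character, and the identity $\langle -1-2^f\rangle = U_2^{[f]}$, both of which the paper leaves implicit.
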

	\begin{proof}
		For every bilinear form $\varphi$ over $\F_2$ with $t(\varphi)=1$, and every $A\leq\Z_2^{\times}$ of the form $U^{[f]}_2$ or $\{\pm 1\}\times U^{(f)}_2$,  we construct a pro-2 Demushkin group $G$ with $s(G)=0$, whose cup product bilinear form is isomorphic to $\varphi$ and $\Img(\chi)=A$. In addition, for every $\varphi$ over $\F_2$ with $t(\varphi)=1$, and $A\leq\Z_2^{\times}$ of the form $\{\pm 1\}\times U^{(f)}_2$,we construct a pro-2 Demushkin group $G$ with $s(G)=0$, whose cup product bilinear form is isomorphic to $\varphi$ and $\Img(\chi)=A$. Then by Theorem \ref{forms over F_2}, we are done.
		\begin{itemize}
			\item Case 1: Let $\varphi$ be a non alternate bilinear form on $V$ of dimension $\mu$ over $\mathbb{F}_2$ such that $A=\ker(\beta)\ne V$ and $0\ne A^{\perp}\subseteq A$. Let $v_1$ be such that $\varphi(v_1,v_1)=1$ and $v_2\in A^{\perp}$. Then $\varphi(v_1,v_2)\ne 0$ and we can assume $\varphi(v_1,v_2)=1$. In addition, $\varphi(v_2,v_2)=0$. Complete $v_2$ to a basis $\{v_i\}_{i\geq 2}$ for $A$, then $\{v_i\}_{i\geq 1}$ is a basis of $V$. Replace each $v_i, i\geq 3$ by $v_i'=v_i+\varphi(v_i,v_1)v_2$, we get that $\varphi(v_1,v_i)=0$ for all $i\geq 3$.  So we can present the form as an orthogonal sum of nondegenerate subspaces $\operatorname{span}\{v_1,v_2,\}\perp \operatorname{span}\{v_i'\}_{i\geq 3}$.  Define a pro-2 group $G$ with the relation $x_1^{2+2^{f}}[x_1,x_2]\prod_{i,j>2} [x_i,x_j]^{\varphi(v_i',v_j')}$. We can define a map $G\to \mathbb{Z}_2^{\times}$ via $x_2\to -(1+2^{f})^{-1},  x_i\to 1, \forall i\ne 2$. Then the image is $U_2^{[f]}$, and one checks that every crossed homomorphism from the free pro-2 group on $\{x_i\}$ vanishes on $r$, so it has property $P$.
			\item Case 2: We start as in the previous case. Now we may assume that $\varphi(v_3,v_4)=1$. Replace $v_i'$ by their projections $v_i''$ over $\operatorname{span}\{v_3',v_4,\}$. Define a pro-2 group with the relation $x_1^2[x_1,x_2]x_3^{2^f}[x_3,x_4]\prod_{i,j>4} [x_i,x_j]^{\varphi(v_i'',v_j'')}$, $f'\geq f$. We can define a map $G\to \mathbb{Z}_2^{\times}$ via $x_2\to -1, x_4\to (1-2^f)^{-1} x_i\to 1, \forall i\ne 2,4$. Then the image is $\{\pm 1\}\times U_2^{(f)}$, and one checks that every crossed homomorphism from the free pro-2 group on $\{x_i\}$ vanishes on $r$, so it has property $P$.

			\item  Case 3: Let $\varphi$ be a bilinear form on $V$ of dimension $\mu$ over $\mathbb{F}_2$ such that $t(G)=-1$. Then there is an element $v_1\notin \ker(\beta)\land v_1\perp \ker{\beta}$. By Lemma \ref{Projections} $V$ is an orthogonal sum $\operatorname{span}\{x_1\}\perp \ker(\beta)$. So $\ker(\beta)$ is an orthogonal sum of $\operatorname{span}\{v_2,v_3\}\perp V'$. Define $r=x_1^2x_2^{2^f}[x_2,x_3]\prod_{i,j>3} [x_i,x_j]^{\varphi(v_i,v_j)}$, and let $\chi$ be the character defined by $\chi(x_1)=-1,\chi(x_3)=(1-2^f)^{-1}, \chi(x_i)=1\forall i\ne 1,3$. Then the image is $\{\pm 1\}\times U_2^{(f)}$, and one checks that every crossed homomorphism from the free pro-2 group on $\{x_i\}$ vanishes on $r$, so it has property $P$.
			\item $t=0:$ Let $\varphi$ be a bilinear form on $V$ of dimension $\mu$ over $\mathbb{F}_2$ such that $t(G)=0$. Build a Demushkin pro-2 group by the relation $r=\prod[x_i,x_j]^{\varphi(v_i,v_j)}$ for some basis $\{v_i\}$ of $V$. 
		\end{itemize}
		
	\end{proof}
	For finitely generated Demushkin groups we have the following equivalence, which in fact holds in the more general context of Poincare-Duality groups:
	\begin{thm}\cite[Theorem 3.7.2]{neukirch2013cohomology}
		Let $G$ be a finitely generated pro-$p$ group. The following are equivalent:
		\begin{enumerate}
			\item $G$ is a Demushkin group.
			\item $\operatorname{cd}(G)=2$ and $I\cong \Q_p/\Z_p$, where $I$ stands for the dualizing module.
			\item $\operatorname{cd}(G)=2$ and $_pI\cong \F_p$, where $_pI$ denotes the additive subgroup of $I$ of elements of order dividing $p$.
		\end{enumerate}
	\end{thm}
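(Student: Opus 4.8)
The three conditions all include (or, for a Demushkin group, are known to imply) that $\operatorname{cd}(G)=2$, so throughout I would fix this and exploit the duality theory of pro-$p$ groups of finite cohomological dimension \cite{neukirch2013cohomology}. The starting point is that $\operatorname{cd}(G)=2<\infty$ already produces a dualizing module $I$, a discrete $p$-primary $G$-module, together with a natural isomorphism $H^2(G,M)\cong\operatorname{Hom}_G(M,I)^{\vee}$ for every finite $p$-torsion $M$, where $(\cdot)^{\vee}$ is Pontryagin dual. Two things are then immediate. First, $(2)\Rightarrow(3)$ is trivial, since the $p$-torsion of $\Q_p/\Z_p$ is $\tfrac1p\Z/\Z\cong\F_p$. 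Second, taking $M=\F_p$ gives $H^2(G)\cong\big(({}_pI)^{G}\big)^{\vee}$; as $G$ is pro-$p$ one has $({}_pI)^{G}\ne 0$ whenever ${}_pI\ne 0$, so this relates $\dim H^2(G)$ to the \emph{fixed points} of ${}_pI$ but, crucially, not directly to the full module ${}_pI$. That distinction is the whole point of the theorem.

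For $(3)\Rightarrow(2)$ I would use that the dualizing module is always $p$-divisible (a standard consequence of $\operatorname{cd}(G)=2$, since the top cohomology functor is right exact). A $p$-divisible $p$-primary abelian group is a direct sum of copies of $\Q_p/\Z_p$ indexed by an $\F_p$-basis of its $p$-torsion; hence ${}_pI\cong\F_p$ forces a single summand and $I\cong\Q_p/\Z_p$, giving $(2)$.

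The heart of the matter is the equivalence of $(1)$ with $(3)$, and here the essential tool is that a finitely generated pro-$p$ group of cohomological dimension $2$ whose cohomology $H^i(G,\F_p)$ is finite-dimensional in each degree is a duality group of dimension $2$, carrying perfect pairings $H^i(G,M)\times H^{2-i}(G,\operatorname{Hom}(M,I))\to H^2(G,I)\cong\Q_p/\Z_p$ in every degree. This finiteness holds under either hypothesis ($H^0=\F_p$, $H^1$ finite by finite generation, and $H^2$ finite either because $\dim H^2(G)=1$ for a Demushkin group or because ${}_pI\cong\F_p$ under $(3)$). Granting this, $(3)\Rightarrow(1)$ runs forward: by the previous paragraph $I\cong\Q_p/\Z_p$, so $\operatorname{Hom}(\F_p,I)=\F_p$ with trivial $G$-action and the pairings become the self-dualities of a Poincaré group of dimension $2$; specializing to $M=\F_p$ identifies the degree-$1$ pairing with the cup product $H^1(G)\times H^1(G)\to H^2(G)$, which is therefore nondegenerate, while $\dim H^2(G)=1$. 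Thus $G$ is Demushkin.

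The remaining direction $(1)\Rightarrow(3)$ is the step I expect to be the main obstacle, because one must pass from the nondegeneracy of a single cup-product form back to the module-theoretic smallness ${}_pI\cong\F_p$. The danger is illustrated by a free pro-$p$ product such as $\Z_p^2\ast\Z_p$, which has $\operatorname{cd}=2$ and $\dim H^2(G)=1$, yet a degenerate cup product and a large dualizing module: the extra free generator lies in the radical of the form while still contributing a fixed vector, so $\dim({}_pI)^{G}=1$ is genuinely weaker than ${}_pI\cong\F_p$, and no mere count of $\dim H^2(G)$ can settle the question. The plan is to use nondegeneracy at $G$ together with the fact that (classically, in the finitely generated setting, cf.\ Corollary \ref{open subgroups}) every open subgroup again has nondegenerate cup product, so that Poincaré duality holds compatibly along the whole lattice of open subgroups; via the duality isomorphisms this constrains the $G$-module structure of $I$ and forces ${}_pI$ to be one-dimensional. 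Equivalently, one must identify the nondegenerate cup-product form with the perfect middle-degree duality pairing and read off that the dualizing coefficient module is $\Q_p/\Z_p$. Making this identification precise, rather than settling for the dimension count, is the technical crux.
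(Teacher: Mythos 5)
A preliminary remark on the comparison: the paper does not prove this statement at all --- it quotes it from \cite[Theorem 3.7.2]{neukirch2013cohomology} as known background --- and its own contribution is the theorem immediately following, where the equivalence of (1) and (3) is proved \emph{without} finite generation. Judged against that proof, your proposal has genuine gaps in both directions you present as complete. The step $(3)\Rightarrow(2)$ rests on the assertion that $\operatorname{cd}(G)=2$ forces the dualizing module to be $p$-divisible ``since the top cohomology functor is right exact.'' This is false: right exactness of $H^2(G,-)$ is what yields the representability $H^2(G,A)^*\cong\operatorname{Hom}_G(A,I)$, not divisibility of $I$. Indeed, Labute's Demushkin groups of rank $\aleph_0$ with $s(G)\ne 0$ --- reproduced in this paper as Proposition \ref{demushkin groups with s not 0} --- have $\operatorname{cd}=2$ and dualizing module $\Z/q'\Z$, finite and non-divisible. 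Your justification nowhere uses finite generation, so it would apply to those groups as well; hence the argument cannot be correct, and divisibility of $I$ for finitely generated $G$ is precisely something that would have to be proved, not quoted.

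The deeper gap is in $(3)\Rightarrow(1)$, which you hang entirely on the ``essential tool'' that finite generation, $\operatorname{cd}(G)=2$ and finite-dimensional cohomology already make $G$ a duality group of dimension $2$, with perfect pairings in every degree. There is no such theorem, and it is false: $G=\Z_p^2\ast\Z_p$ --- your own example --- is finitely generated, of cohomological dimension $2$, with finite-dimensional cohomology and even $\dim H^2(G)=1$, yet a nontrivial free pro-$p$ product of infinite groups is never a duality group of dimension $2$. Concretely, Mayer--Vietoris with coefficients in $\F_p[[G]]$ gives $0\ne \F_p[[G]]\hookrightarrow H^1(G,\F_p[[G]])$, whereas dimension-$2$ duality forces this group (equivalently, the degree-one module $\varinjlim_U H^1(U,\F_p)^*$ with transition maps dual to corestriction) to vanish; this is the profinite incarnation of a free product having more than one end. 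Note that the hypothesis ${}_pI\cong\F_p$ appears nowhere in your ``tool'': as you yourself stress, $\operatorname{cd}(G)=2$ alone only controls the fixed points $({}_pI)^G$ through $\dim H^2(G)$, and passing from fixed points to the whole module is the actual content of the theorem, so invoking all-degree duality begs the question. The paper's proof of its generalization shows how to do this correctly, and without ever knowing $I\cong\Q_p/\Z_p$: for $A$ killed by $p$ one has $\operatorname{Hom}_G(A,I)=\operatorname{Hom}_G(A,{}_pI)=\operatorname{Hom}_G(A,\F_p)=H^0(G,A^*)$, whence $\dim H^2(G)=1$, and nondegeneracy of the cup product then follows by a diagram chase on $0\to\F_p\to\operatorname{Ind}^G(\F_p)\to A\to 0$, using only the automatic top-degree duality. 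Finally, you leave $(1)\Rightarrow(3)$ explicitly unfinished (your ``technical crux''); the paper settles that direction by citing Labute's computation together with the fact that open subgroups of Demushkin groups are again Demushkin (Corollary \ref{open subgroups}). So the two directions you regard as done rest on false supporting claims, and the direction you identify as hard remains open.
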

	In the infinite dimension case there are examples of Demushkin groups whose dualizing module is different then $ \Q_p/\Z_p$. We will give such examples for every rank and every value on $q(G)$ in Proposition \ref{demushkin groups with s not 0}. However, we can still prove the following equivalence:
	\begin{thm}
		Let $G$ be a pro-$p$ group. The following are equivalent:
		\begin{enumerate}
			\item $G$ is a Demushkin group.
			\item $\operatorname{cd}(G)=2$ and $_pI\cong \F_p$, where $I$ is the dualising module of $G$ and $_pI$ denotes the additive subgroup of $I$ consisting of elements of order dividing $p$.
		\end{enumerate}
	\end{thm}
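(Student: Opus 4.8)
My plan is to route both implications through the $p$-torsion of the dualizing module and the second cohomology of open subgroups, using only the degree-$2$ part of the general profinite duality theorem (which holds for \emph{every} profinite group of finite cohomological dimension, and so survives in arbitrary rank even though $G$ is no longer a Poincaré duality group). Recall that since $\operatorname{cd}(G)=2$ — for (1) this is Corollary \ref{open subgroups}, for (2) it is a hypothesis — the dualizing module $I$ exists, every open $U\le G$ has $\operatorname{cd}(U)=2$ and $D_2(U)=\operatorname{Res}^G_U I$, and duality gives a natural isomorphism $H^2(U,\F_p)^\vee\cong \operatorname{Hom}_U(\F_p,I)=({}_pI)^U$. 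As $I$ is discrete, ${}_pI=\bigcup_U ({}_pI)^U$ is a directed union of fixed spaces, whence
$$\dim_{\F_p}{}_pI=\sup_{U\ \text{open}}\dim_{\F_p}H^2(U,\F_p),$$
each term being $\ge 1$ because $\operatorname{cd}(U)=2$. I would isolate this as a lemma: \emph{${}_pI\cong\F_p$ if and only if every open subgroup $U$ is one-relator, i.e. $\dim H^2(U,\F_p)=1$.} Note the $G$-action on ${}_pI\cong\F_p$ is automatically trivial since $G$ is pro-$p$.

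Granting the lemma, the implication $(1)\Rightarrow(2)$ is immediate: by Corollary \ref{open subgroups} every open subgroup of a Demushkin group is again Demushkin, hence has $\dim H^2=1$, so the supremum above is $1$ and ${}_pI\cong\F_p$. Notably this route avoids having to know the explicit shape of $I$ (that it is $\Q_p/\Z_p$ or $\Z/q\Z$).

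For $(2)\Rightarrow(1)$ I first take $U=G$ in the lemma to get $\dim H^2(G)=1$, so $G=F/\langle r^F\rangle$ is one-relator; it remains to show the cup product is nondegenerate. I would argue by contradiction: suppose $0\ne\chi\in H^1(G,\F_p)$ lies in the radical and set $U=\ker\chi$, open of index $p$. Choosing a free basis $X$ of $F$ with dual basis $\{\chi_i\}$ so that $\chi=\chi_1$, the radical condition $\chi_1\cup\chi_j=\beta_{1j}=0$ for all $j$ says that $x_1$ occurs in $r$, modulo triple commutators and $p^2$-th powers, only through $x_1^{p\alpha_1}$. Writing $U=F_U/\langle\langle {}^{x_1^k}r : 0\le k<p\rangle\rangle_{F_U}$ with $F_U$ free on the Schreier basis $\{x_1^p\}\cup\{x_1^{-k}x_jx_1^{k}\}_{j\ne 1}$, I would compute the initial forms of the $p$ conjugate relators ${}^{x_1^k}r$ and show that, because for distinct $k$ their commutator parts live in disjoint sets of free generators, at least two of these relators are $\F_p$-independent modulo $[F_U,R_U]R_U^p$. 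That forces $\dim H^2(U,\F_p)\ge 2$, contradicting the lemma; hence no radical element exists and $G$ is Demushkin.

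The hard part will be exactly this last computation — converting the vanishing of $\beta_{1\bullet}$ into the bound $\dim H^2(\ker\chi)\ge 2$. The clean subcase is when $r$ retains a nontrivial commutator $[x_i,x_j]$ with $i,j\ne 1$, where the disjoint-support argument applies verbatim. The delicate subcase is when the whole quadratic part of $r$ collapses onto powers of a single generator (all $\beta_{ij}=0$): then the $p$ conjugate relators share a common initial form, and I would instead use $\operatorname{cd}(G)=2$ to exclude the torsion that a bare relator $x_1^{p\alpha_1}$ would produce, descending to deeper terms of the lower $p$-central series to produce the second independent relator. I also expect the $p=2$ case to require separate bookkeeping, since the form need not be alternate and the diagonal term $\chi\cup\chi=\binom{2}{2}$ enters the radical analysis; I would treat it by the same initial-form argument applied to the nonalternate normal forms of $r$.
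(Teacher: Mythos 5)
Your reduction lemma is correct and is a nice way to package things: for any profinite group with $\operatorname{cd}(G)=2$ the dualizing module restricts to open subgroups, so $H^2(U,\F_p)^*\cong\operatorname{Hom}_U(\F_p,I)=({}_pI)^U$, and since $I$ is discrete, ${}_pI=\bigcup_U({}_pI)^U$ is a directed union, giving $\dim({}_pI)=\sup_U\dim H^2(U,\F_p)$; combined with Corollary \ref{open subgroups} this disposes of $(1)\Rightarrow(2)$ essentially as the paper does. Your $(2)\Rightarrow(1)$, however, takes a genuinely different route: the paper proves nondegeneracy of the cup product purely homologically --- duality gives $H^0(G,A)\cong H^2(G,A^*)^*$ for discrete $A$ killed by $p$, and injectivity of $H^1(G,\F_p)\to H^1(G,\F_p^*)^*$ follows by a diagram chase on $0\to\F_p\to\operatorname{Ind}^G(\F_p)\to A\to 0$, with no relator combinatorics at all --- whereas you route through the Dummit--Labute characterization (a radical class $\chi$ forces $\ker\chi$ to need at least two relations), a statement the paper also proves, but separately, as the last theorem of Section 2.

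The genuine gap is your criterion for $\dim H^2(U,\F_p)\geq 2$, which is false as stated. Write $N$ for the preimage of $U=\ker\chi$ in $F$ and $R=\langle r^F\rangle$. The five-term exact sequence for $1\to R\to N\to U\to 1$ gives $H^2(U)^*\cong\ker\bigl(R/R^p[R,N]\to N/\Phi(N)\bigr)=(R\cap\Phi(N))/R^p[R,N]$. Dummit--Labute prove that $\dim R/R^p[R,N]=p$ \emph{unconditionally}: the $p$ conjugates ${}^{x_1^k}r$ are always independent modulo $R^p[R,N]$, whether or not $G$ is Demushkin; in particular, for a finitely generated Demushkin $G$ and any index-$p$ subgroup $U$ they are independent there and yet $\dim H^2(U)=1$. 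So exhibiting two conjugates independent modulo $R^p[R,N]$ forces nothing. What must actually be shown is that the images of the conjugates in $N/\Phi(N)$ span at most $p-2$ dimensions --- equivalently, that at least two independent elements of $R$ (e.g.\ suitable quotients of conjugates) land in $\Phi(N)$. Your disjoint-support argument proves independence, which is the automatic and irrelevant half; the substantive half is membership in $\Phi(N)$, and that is exactly where the higher-order terms bite: triple commutators involving $x_1$ need not lie in $\Phi(N)$ (for odd $p$, $[[x_1,x_2],x_1]\equiv z_0^{-1}z_1^{2}z_2^{-1}$ mod $[N,N]$, where $z_k=x_1^{-k}x_2x_1^{k}$ are distinct Schreier generators, and this is nontrivial in $N/\Phi(N)$), so they shift the images of the conjugate relators, and controlling this shift is the whole content of Dummit--Labute's computation. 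Since you explicitly defer this as ``the hard part,'' the crux of $(2)\Rightarrow(1)$ is both misstated and absent. The cleanest repair inside your architecture: once your lemma yields that $G$ and all its open subgroups of index $p$ are one-relator, invoke the paper's final theorem of Section 2 (proved there by an inverse-limit reduction to the finitely generated case of Dummit--Labute) rather than recomputing the relator combinatorics.
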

	\begin{proof}
		That 1 implies 2 was already observed in \cite[Section 1.3]{labute1966demuvskin} for Demushkin groups of countable rank. However, since by Corollary \ref{open subgroups} every open subgroup of a Demushkin group of arbitrary rank is Demushkin, the same argument works in general.
		
		We now prove that $2$ implies $1$. Recall that by the duality property of $I$, for every discrete $p$-torsion $G$- module $A$, $H^2(G,A)^*\cong \operatorname{Hom}_G(A,I)$. In particular, if $A$ is annihilated by $p$, then $H^2(G,A)^*\cong \operatorname{Hom}_G(A,{_pI})$. By assumption, we get $H^2(G,A)^*\cong \operatorname{Hom}_G(A,\F_p)$. Again, for $G$- modules annihilated by $p$, $\operatorname{Hom}_G(A,\F_p)=\operatorname{Hom}_G(A,\Q_p/\Z_p)$, hence $H^2(G,A)^*\cong \operatorname{Hom}_G(A,\Q_p/\Z_p)\cong (A^*)^G\cong H^0(G,A^*)$. As a result, $H^2(G,\F_p)\cong H^0(G,\F_p^*)^*\cong \F_p$. By definition of the dualizing module, the isomorphism is induced by the cup product pairing $H^2(G,A)\cup H^0(G,A^*)\to H^2(G,\Q_p/\Z_p)\xrightarrow{\operatorname{tr}} \Q_p/\Z_p$. Notice that for finite modules we can reverse the isomorphism by taking $A^*$ instead of $A$. I.e, the cup product induces isomorphism $H^0(G,A)\to H^2(G,A^*)^*$. Since every discrete $G$-module is a direct limit of finite modules, we get this isomorphism holds also for infinite discrete $G$-modules annihilated by $p$.
		
		We left to show that the cup product bilinear form $H^1(G)\cup H^1(G)\to \F_p$ is nondegenerate. This is equivalent to saying that the homomorphism induced by the cup product $H^1(\F_p)\to \operatorname{Hom}(H^1(G),\F_p)\cong H^1(G)^*$ is injective. The last isomorphism follows form the fact that $H^1(G)$ is annihilated by $p$. Notice that since $\F_p\cong \F_p^*$ as $G$-modulus, this is equivalent to the injectivity of the map induced by the cup product $H^1(G,\F_p)\to H^1(G,\F_p^*)^*$. Now look at the following exact sequence: $0\to \F_p\to \operatorname{Ind}^G(\F_p)\to A\to 0$. All the $G$-modules are discrete and annihilated by $p$. Taking its dual, we get the following commutative diagram, when the vertical arrows induced by the cup product.
		$$
		\xymatrix@R=14pt{ H^0(G,\operatorname{Ind}^G(\F_p)) \ar[d] \ar[r] & H^0(G,A) \ar[d] \ar[r] &  H^1(G,\F_p) \ar[d]  \ar[r]  & 0 \ar[d]\\
			H^2(G,\operatorname{Ind}^G(\F_p)^*)^* \ar[r] & H^{2}(G,A^*)^*  \ar[r] &  H^{1}(G,\F_p^*)^* \ar[r]& H^1(G,\operatorname{Ind}^G(\F_p)^*)^* \\}
		$$
		the isomorphisms of the first two vertical maps yields the injectivity of the third via diagram chasing.
	\end{proof}
	We finish this section with the following characterization of Demushkin groups, which generalizes the result in \cite{dummit1983demuvskin} for finitely generated Demushkin groups.
	\begin{thm}
		Let $G$ be a one related pro-$p$ group. $G$ is a Demushkin group if and only if every open subgroup $U\leq G$ of index $p$ is one related.
	\end{thm}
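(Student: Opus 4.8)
The forward direction is immediate from Corollary \ref{open subgroups}: if $G$ is Demushkin then every open subgroup is Demushkin, so each index-$p$ subgroup $U$ has $\dim H^2(U)=1$ and is one related. All the content is in the converse, which I would organize around a pointwise dichotomy. Fix a minimal presentation $G=F/\langle r^F\rangle$; since $G$ is one related, $\dim H^2(G)=1$, and writing the cup product as a form $\langle\chi,\psi\rangle=\chi\cup\psi$ on $H^1(G)$, the assertion ``$G$ is Demushkin'' is exactly ``$\operatorname{rad}=0$'', where $\operatorname{rad}=\{\chi:\chi\cup\psi=0\ \forall\psi\}$. Every index-$p$ subgroup is $U_\chi=\ker\chi$ for a nonzero $\chi\in H^1(G)=\operatorname{Hom}(G,\F_p)$, unique up to scalar, so it suffices to prove
$$\dim H^2(U_\chi)=1 \iff \chi\notin\operatorname{rad};$$
ranging over $\chi$ then gives the theorem. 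Throughout I use that a genuine one relator pro-$p$ group has $\operatorname{cd}(G)=2$, equivalently that its relation module is free of rank one (the pro-$p$ analogue of Lyndon's theorem), which holds in arbitrary rank.

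To compute $\dim H^2(U)$ I would pass through the free group. Lift $\chi$ and let $V=\ker\bigl(F\to G\xrightarrow{\chi}\F_p\bigr)$, which is free; then $U=V/R$ with $R$ the normal closure of $r$ in $F$, and $R$ is normally generated in $V$ by the orbit of $r$ under $N:=G/U\cong\Z/p\Z$. The five-term exact sequence of $1\to R\to V\to U\to1$ together with $H^2(V)=0$ identifies $H^2(U)$ with the cokernel of $H^1(V)\to H^1(R)^U$; dualizing gives
$$\dim H^2(U)=\dim\ker\bigl(\mu\colon M\to H_1(V,\F_p)\bigr),\qquad M:=R/R^p[V,R],$$
where $M$ is the mod-$p$ relation module of $U$ and $\mu$ is induced by $R\hookrightarrow V$. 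Now $M$ is a cyclic $\F_p[N]$-module generated by the class of $r$, and freeness of the relation module of $G$ forces $M\cong\F_p[N]$ (free of rank one). Since $\mu$ is $\F_p[N]$-equivariant, $\ker\mu=\operatorname{Ann}(\ell(r))$ where $\ell(r):=\mu(\bar r)\in H_1(V,\F_p)$ is the image of the relator, whence
$$\dim H^2(U)=p-k,\qquad k=\min\{\,i:\epsilon^{\,i}\ell(r)=0\,\},\quad \epsilon=\sigma-1 .$$
Because $r\in\Phi(F)$ one checks $\epsilon^{\,p-1}\ell(r)=0$ always, so $k\le p-1$, $\dim H^2(U)\ge1$, and $\dim H^2(U)=1\iff\epsilon^{\,p-2}\ell(r)\ne0$.

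It remains to read off the relevant $\epsilon$-graded part of $\ell(r)$ and match it with the cup form. Choosing the free basis of $F$ so that $\chi$ is dual to $x_1$, Reidemeister--Schreier presents $V$ on $\{x_1^{\,p}\}\cup\{x_1^{\,j}x_ix_1^{\,-j}\}$, on which $N$ fixes $\overline{x_1^{\,p}}$ and makes each $\operatorname{span}\{x_1^{\,j}x_ix_1^{\,-j}\}_j$ a free $\F_p[N]$-module. Comparing the weight filtration of $r$ with the $\epsilon$-filtration shows that commutator contributions land in the free summands, with the quadratic part contributing
$$\ell(r)=a_1\,\overline{x_1^{\,p}}+\sum_{j\ne 1}b_{1j}\,\epsilon\,\overline{x_j}+\bigl(\text{terms in }\epsilon^{2}H_1(V,\F_p)\bigr),$$
where $b_{1j}=\chi\cup\chi_j$ and $a_1=\chi\cup\chi$ (relevant only for $p=2$), while every weight $\ge 3$ commutator is $\epsilon^{2}$-deep. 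Applying $\epsilon^{\,p-2}$ kills the trivial summand and the deep part, so for odd $p$ we get $\epsilon^{\,p-2}\ell(r)=\sum_j b_{1j}\epsilon^{\,p-1}\overline{x_j}\ne0$ iff some $\chi\cup\chi_j\ne0$, while for $p=2$ we have $\epsilon^{\,p-2}=\mathrm{id}$ and $\ell(r)=a_1\overline{x_1^{\,2}}+\sum_j b_{1j}\epsilon\overline{x_j}\ne0$ iff $\chi\cup\chi\ne0$ or some $\chi\cup\chi_j\ne0$. In both cases $\epsilon^{\,p-2}\ell(r)\ne0\iff\chi\notin\operatorname{rad}$, which is the desired dichotomy.

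I expect the main obstacle to be the freeness $M\cong\F_p[N]$, i.e.\ that the relation module of $G$ is free of rank one: this is what makes $\dim M=p$ and drives the count, and it is precisely the input unavailable once the finitely generated Dummit--Labute Euler-characteristic argument is left behind in infinite rank. It is the pro-$p$ Lyndon freeness theorem (equivalently $\operatorname{cd}(G)=2$) and is valid in any rank provided $r$ is not a proper $p$-th power; in the degenerate proper-power case $G$ is neither Demushkin nor has the index-$p$ property, so the equivalence still holds but must be checked separately. The second delicate point is the comparison of the weight filtration of $r$ with the $\epsilon$-filtration on $H_1(V,\F_p)$ — the claim that higher commutators are $\epsilon^{2}$-deep, best proved via the associated graded Lie algebra — together with the $p=2$ bookkeeping of the non-alternating diagonal term $a_1=\chi\cup\chi$, which the computation absorbs so that the dichotomy comes out uniformly in $p$.
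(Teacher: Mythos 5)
Your reduction of the theorem to the pointwise dichotomy $\dim H^2(U_\chi)=1\iff\chi\notin\operatorname{rad}$ is a clean way to organize the converse, and the five-term/annihilator bookkeeping that follows is internally coherent. But the proof rests entirely on the claim you yourself flag as the crux: that the mod-$p$ relation module of a one-relator pro-$p$ group is free of rank one (equivalently $\operatorname{cd}(G)=2$) whenever the relator is not a proper $p$-th power, ``the pro-$p$ analogue of Lyndon's theorem, which holds in arbitrary rank.'' This is not a theorem. It is a well-known open problem in pro-$p$ group theory (going back to Serre and Gildenhuys), open even for \emph{finitely generated} one-relator pro-$p$ groups; indeed, if it were available, Dummit--Labute's 1983 characterization and the present paper's proof would both collapse to a few lines. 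Without $M\cong\F_p[N]$ you have no lower bound $\dim M=p$, the identification $\ker\mu=\operatorname{Ann}(\ell(r))$ gives nothing, and the whole $\epsilon$-filtration computation is vacuous. (Two smaller problems: the proper-power case, which you defer to ``must be checked separately,'' is genuinely inside the hypotheses of the theorem --- e.g.\ $r=[x_2,x_3]^p$ is a one-relator, non-Demushkin group for which your machinery is inapplicable by your own admission, and no alternative argument is offered; and for $p=2$ the rank-one proper power gives $G=C_2$, which \emph{is} Demushkin, so even the claimed shape of the degenerate case is off.)

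The paper's proof is designed precisely to avoid this open problem, and the contrast is instructive. It argues contrapositively: given $0\ne\chi_1\in\operatorname{rad}$, it chooses a basis of $F$ adapted to $\chi_1$ (three cases according to the torsion of $G/[G,G]$) so that the relator's commutator part avoids the distinguished generator $y$, and then writes $G=\invlim G_J$ over finitely generated one-relator quotients $G_J$ that are themselves non-Demushkin with the same adapted shape. For these \emph{finitely generated, adapted} quotients, Dummit--Labute's counting arguments --- not any Lyndon-type freeness theorem --- give $\dim R_J/R_J^p[N_J,R_J]=p$ with the conjugates $r_J^{y^i}$ as a basis, together with the bound $s_J\le p-2$ on how many of them are independent modulo $\Phi(N_J)$; these finite-level facts are then transferred to $G$ by compatibility in the inverse limit, yielding $\dim H^2(U)=p-s\ge 2$ for the single subgroup $U=\ker\chi_1$. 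So where you need an unconditional structure theorem for the relation module of the infinite-rank group, the paper needs only the finite-rank computation in a situation it has engineered by choice of basis, plus the inverse-limit mechanism (Theorem \ref{inverse limit}) that is the paper's main technical tool. To repair your argument you would have to either prove the freeness $M\cong\F_p[N]$ in your specific adapted situation (which is essentially what Dummit--Labute do, and what the paper imports), or restrict the dichotomy to the radical direction and run the same reduction to finitely generated quotients --- at which point you have reproduced the paper's proof.
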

	\begin{proof}
		By Corollary \ref{open subgroups} every open subgroup of Demushkin group is again a Demushkin group, and in particular one related. Hence, we only need to prove the second direction.
		
		Since $G$ is one related, $G=F/\langle r^F\rangle $ for some free pro-$p$ group and $r\in \Phi(F)$. Assume that $G$ is not Demushkin. Then there exists a nontrivial element in the radical of the cup product bilinear form. Call it $\chi_1 \in H^1(G)$. 
		
		Case 1: Suppose that $G/[G,G]$ is torsion free. Complete $\chi_1$ to a basis $(\chi_i)$ of $H^1(G)$ and take a basis $(x_i)_i$ of $F$ which maps onto the dual basis $(\chi_i^*)_i$ of $G/\Phi(G)$, we notice that $r\equiv r'\mod [[F,F],F]$, where $r'$ is product of commutators which doesn't involve $x_1$.

		Case 2: Suppose that $G/[G,G]$ has torsion subgroup $\Z/q\Z$ where $q=p^n$ for some $n \in \mathbb N$. There exists a unique element $s\in F$ such that $s^{p^n}\equiv r\mod [F,F]$. In that case, take a basis for $F$ which contains $s$. Put $x_1=s$,
		
		Subcase 2(a): Suppose  $\chi_1(x_1)\ne 0$. We may assume that $\chi_1(x_1)=1$. By choosing a basis of the annihilator of $x_1$ in $H_1(G)$ and adding $\chi_1$ to it, we get a basis of $H^1(G)$ whose dual contains $x_1$. Since $x_1^*=\chi_1\in \operatorname{rad}(H^1)(G)$, we get that $r\equiv x_1^qr'\mod [F.F,F]$ where $r'$ is product of commutators which doesn't involve $x_1$. 
		
		Subcase 2(b): Suppose that $\chi_1(x)=0$. Then complete $\chi_1$ to a basis of $\textrm{Ann}_{H^1(G)}(x_1)$, and complete it to a basis $B$ of $H^1(G)$ by adding some $\chi'\in H^1(G)$ such that $\chi'(x_1)=1$. We can choose $x_2 \in F$ such that the images of $x_1$ and $x_2$ are dual to $\chi'$ and $\chi_1$ respectively in the dual basis $B^*$ of $G/\Phi(G)$. We get that $r\equiv x_1^qr'\mod [[F,F],F]$ where $r'$ is product of commutators which doesn't involve $x_2$. 
		
		Set $y=x_1$ in Case 1 and Case 2(a)  and set $y=x_2$ in case 2(b). For every finite subset of the chosen basis of $G$, let $F_J$ be the free pro-$p$ group over $J$ and $r_J$ is the image of $r$ under the map $F\to F_J$ which sends $x_i\to x_i$ for all $i\in J$ and $x_i\to 1$ otherwise.  Then $G=\invlim G_J$ where $J$ runs over the finite subsets of the chosen basis which contain $y$ and $G_J=F_J/\langle r_J^{F_J} \rangle $.  Notice that $y$ doesn't appear in the commutators in $r_J$ and hence the image of $\chi_1$ in $H^1(G_J)$ belongs to the radical of the cup product. Hence, $G_J$ is a finitely generated one-relator pro-$p$ group which is not Demushkin and we have set up the notation for $r_J$ to apply the argument in \cite{dummit1983demuvskin}.
		
		Let $N_J$ be the open subgroup generated by $y^p$ and $x_i$ for all $y\ne x_i\in J$, and take $U_J=N_J/R_J$ where $R_J$ is normal subgroup in $F_J$ generated by $r_J$. Correspondingly we have $N\leq F$ and $U=N/R$. Notice that $R_J$ is generated as a normal subgroup of $N_J$ by $r_{j,i}=r_J^{y^i}$ for all $0\leq i\leq p-1$, and the same holds for $R$ and $N$. By \cite{dummit1983demuvskin} $\dim R_J/R_J^p[N_J,R_J]=p$  and $r_{J,i}$ form a basis. Hence the same holds for $R/R^P[R,N]$. We have 2 options: if $r\in \Phi(N)$ then $N/R$ is a minimal presentation of $U$ and we get $H^2(U)=p$. Otherwise, by \cite{dummit1983demuvskin}, for every $J$ there is $1\leq s_J\leq p-2$ such that $r_{J,1},...,r_{J,s_J}$ are linearly independent modulo $\Phi(N_J)$ and the rest of the elements lie in $\Phi(N_J)$. Let $s=\max s_J$. If $s_J=s$ for some $J$, then it holds for every $J\subset J'$, so we can assume $s_J=s$ for all $J$. Hence, $r_1,...,r_s$ are linearly independent modulo $\Phi(N)$, and the rest belongs to $\Phi(N)$. Thus, $\dim H^1(N)-\dim H^1(U)=s$. By the exact sequence $0\to R\to N\to U\to 0$ we get  
		\[ 0\to H^1(U)\to H^1(N)\to H^1(R)^N\to H^2(U)\to 0.\] We already know that $\dim H^1(R)^N=p$. Hence \[\dim H^2(U)=\dim H^1(R)^N- \dim H^1(N)+\dim H^1(U)=p-s>1.\]
		
	\end{proof}
	\section*{Demushkin groups of uncountable rank as absolute Galois groups}
	Let $F$ be a field. Denote by $G_F$ its absolute Galois group, and by $G_F(p)$ its maximal pro-$p$ quotient. Notice that $G_F(p)$ is isomorphic to $\operatorname{Gal}(F(p)/F)$ where $F(p)$ denotes the maximal $p$-extension of $F$- i.e, the compositum of all its finite $p$-extension, and hence it is sometimes called "The maximal pro-$p$ Galois group of $F$ (see, for example, \cite{blumer2023groups}).

	Fix a prime $p$ and assume from now that $F$ contains a primitive $p$-th root of unity $\rho$. Notice that this implies
	that $\operatorname{char}(F)\ne p$. This assumption has no restrictions on our results since by \cite[\S 2. Proposition 3]{serre1979galois} for fields $F$ of  characteristic $p$, $G_F(p)$ is a free pro-$p$ group, and in particular, not Demushkin.

	For $G_F$ we have an arithmetic interpretation of the first cohomology groups. More precisely: $H^1(G_F)\cong F^{\times}/{F^{\times}}^p$, and $H^2(G_F)\cong _p\Br(F)$,  where $\Br(F)$ stands for the Brauer group of $F$, and $ _p\Br(F)$ denotes the subgroup of elements of order dividing $p$. We also have an interpretation of the cup product which is due to Serre: By the Kummer isomorphism $H^1(G_F) \cong F^{\times}/{F^\times}^p$ every element in $H^1(G_F)$ is represented by an element of $F^{\times}$. We denote the elements of $H^1(G)$ by $(a)$ where $a\in F^{\times}$. Now we have the following formula: $(a)\cup(b)=\left (\dfrac{a,b}{F,\rho}\right )$ where $\left (\dfrac{a,b}{F,\rho} \right )$ denotes cyclic algebra generated by two elements $s,t$ over $F$ subject to the relations $s^p=a,t^p=b$ and $sr=\rho ts$.
	
	One of the central problems of number theory is identifying which profinite groups can occur as absolute Galois groups. Since pro-$p$ groups are easier to deal with, a simpler version of this question is: Which pro-$p$ groups can occur as maximal pro-$p$ Galois groups of fields? Recall that by the Artin-Schreier theorem the only nontrivial finite group which can occur as an absolute Galois group is $C_2$- which is the only finite Demushkin group. A few restrictions on the possible properties of absolute, and maximal pro-$p$, Galois groups are already known:
	
	Let $F$ be a field containing a primitive root of unity, and let $G_F$ be its absolute Galois group. Then $G_F$ has a natural action on $\mu_{p^{\infty}}=\bigcup_{n\in \mathbb{N}} \mu_{p^n}\subseteq \bar{F}$, the subset of all $p^n$-roots of unity. Since $\mu_{p^{\infty}}\cong \Q_p/\Z_p$, this action induces a homomorphism $f: G_F\to \operatorname{Aut}(\Q_p/\Z_p)\cong \mathbb{Z}_p^{\times}$. Since $ \rho \in F$ the image of $f$ is a pro-$p$ group and $f$ induces a homomorphism $G_F(p) \to \mathbb{Z}_p^{\times}$. In \cite[Theorem 2.2]{minavc1992pro} it was shown that this homomorphism has property $P$. The same conclusion was proved in  \cite{minavc1991demuvskin} also in case $F$ doesn't contain a primitive $p$-th root of unity. Hence, applying Proposition \ref{sufficient criterion for s(G)=0}, we conclude:
	\begin{prop}
		Let $G$ be a pro-$p$ Demushkin group which is isomorphic to the maximal pro-$p$ Galois group of some field $F$. Then $s(G)=0$.
	\end{prop}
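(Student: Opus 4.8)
The plan is to deduce the statement directly from the criterion in Lemma~\ref{sufficient criterion for s(G)=0}, which asserts that the mere existence of a homomorphism $\chi\colon G\to\Z_p$ with property $P$ already forces $s(G)=0$. Thus the whole task reduces to producing such a homomorphism, and the analytic input for this is exactly the cyclotomic character discussed just above the statement. Note that the criterion applies because $G$ is assumed to be Demushkin.

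Concretely, I would recall that $G_F$ acts on $\mu_{p^\infty}=\bigcup_n\mu_{p^n}\subseteq\bar F$; as $\mu_{p^\infty}\cong\Q_p/\Z_p$, this gives a homomorphism $f\colon G_F\to\operatorname{Aut}(\Q_p/\Z_p)\cong\Z_p^\times$. Since $\rho\in F$, the image of $f$ is pro-$p$, so $f$ descends to $f\colon G=G_F(p)\to\Z_p^\times$, and by \cite[Theorem 2.2]{minavc1992pro} (with \cite{minavc1991demuvskin} covering the case $\rho\notin F$) this character has property $P$. For $p$ odd one has $\Z_p^\times\cong\mu_{p-1}\times(1+p\Z_p)$, and the pro-$p$ image of $f$ necessarily lies in the torsion-free factor $1+p\Z_p\cong\Z_p$; as property $P$ is a condition on the $G$-module $\Q_p/\Z_p$ determined by $f$, which is not altered by this identification, the resulting map $G\to\Z_p$ still has property $P$. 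Lemma~\ref{sufficient criterion for s(G)=0} then gives $s(G)=0$.

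The only genuine subtlety, and the step I expect to require the most care, is the case $p=2$: here $\Z_2^\times\cong\{\pm1\}\times(1+4\Z_2)$ is pro-$2$ but has $2$-torsion, so the cyclotomic character need not take values in a copy of $\Z_2$. I would resolve this either by invoking the Min\'a\v{c}--Ware results, which establish property $P$ for $p=2$ as well, or by composing $f$ with the projection onto $1+4\Z_2\cong\Z_2$ and checking that property $P$ persists under this projection; either way Lemma~\ref{sufficient criterion for s(G)=0} delivers $s(G)=0$ and completes the argument.
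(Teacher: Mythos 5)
You take the same route as the paper: its proof is exactly the paragraph preceding the proposition (the action of $G_F$ on $\mu_{p^\infty}\cong\Q_p/\Z_p$ induces a character of $G_F(p)$ into $\Z_p^\times$ which has property $P$, by \cite[Theorem 2.2]{minavc1992pro} and \cite{minavc1991demuvskin}), followed by an appeal to Lemma \ref{sufficient criterion for s(G)=0}. Your only divergence is the detour through a torsion-free target, which is an artifact of a typo: the hypothesis ``$\chi\colon G\to\Z_p$'' in Lemma \ref{sufficient criterion for s(G)=0} should be read as $\chi\colon G\to\Z_p^\times=\operatorname{Aut}(\Q_p/\Z_p)$, as is clear from how the paper itself applies the lemma (in Propositions \ref{group for every form} and \ref{demushkin groups with s not 0} the characters constructed land in $\Z_p^\times$, and in the pro-$2$ constructions their images even contain $-1$). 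Under that reading no projection is needed for any $p$, and your first alternative for $p=2$ (invoke Min\'a\v{c}--Ware and apply the lemma directly to the character into $\Z_2^\times$) is precisely the paper's argument. Be warned, however, that your second alternative is not merely delicate but false: property $P$ does not survive composition with the projection $\pi\colon\Z_2^\times\to 1+4\Z_2$. Indeed, by Lemma \ref{property Q} there is a \emph{unique} homomorphism $G\to\operatorname{Aut}(I)$ with property $P$; so for instance when $F=\Q_2$, where $G_F(2)$ is Demushkin and the cyclotomic character $f$ is surjective onto $\Z_2^\times$, the map $\pi\circ f$ is a second, distinct homomorphism into $\Z_2^\times$ and therefore cannot have property $P$. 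Delete that branch and keep the first.
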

	Obviously, this is not always the case, as is shown in the following proposition:
	\begin{prop}\label{demushkin groups with s not 0}
		For every prime $p$, $q$ a power of $p$ of $0$, $q'>q$ a power of $p$ and $\mu<\aleph_0$ a cardinal, there exists a pro-$p$ Demushkin group of rank $\mu$ such that $q(G)=q$ and $s(G)=q'$.
	\end{prop}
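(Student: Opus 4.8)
The plan is to mimic the explicit construction of Proposition \ref{group for every form}, but to append to the defining relator a second power term that obstructs every $\Z_p$-valued character from having property $P$. Assume $q=p^{h}$ with $h\ge 1$ (the reading $q=0$, $q'\neq0$ is degenerate, since $q(G)=0$ forces the trivial character to have property $P$ and hence $s(G)=0$; the case $q=2$ is handled by the non-alternate relators of the $p=2$ theorem above). Write $q'=p^{h'}$, so $q\mid q'$. Let $F$ be the free pro-$p$ group on $\{x_1,x_2\}\cup\{y_j,z_j\}_{j\in\mu}$, set
\[
 r=x_1^{q}[x_1,x_2]\,x_2^{q'}\prod_{j\in\mu}[y_j,z_j],
\]
the product converging exactly as in Proposition \ref{group for every form}, and put $G=F/\langle r^F\rangle$. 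Reading off the commutators, the cup-product form on $H^1(G)$ is the orthogonal sum of the hyperbolic planes $\langle x_1^{*},x_2^{*}\rangle$ and $\langle y_j^{*},z_j^{*}\rangle_{j\in\mu}$, hence nondegenerate (directly, or via Proposition \ref{locally nondegenerate}); since $G$ is one-related this forces $\dim H^2(G)=1$, so $G$ is a Demushkin group of rank $\mu$. Computing $q(G)$ is immediate: modulo $[F,F]$ we have $r\equiv x_1^{q}x_2^{q'}\equiv(x_1x_2^{q'/q})^{q}$, so $r\in F^{q}[F,F]\setminus F^{pq}[F,F]$ and $q(G)=q$.

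For the invariant $s(G)$ I would use the criterion of Proposition \ref{equivalent criterion}: writing $c_i=\chi(x_i)$, a character $\chi$ has property $P$ iff the associated crossed-homomorphism evaluation $D(r)$ vanishes identically. A direct computation (the symplectic factor forcing $\chi\equiv 1$ on the $y_j,z_j$) shows that the block $x_1^{q}[x_1,x_2]$ imposes $c_1=1$ and $c_2=(1-q)^{-1}$, while the extra factor $x_2^{q'}$ imposes $1+c_2+\cdots+c_2^{q'-1}=0$, i.e.\ $c_2^{q'}=1$. Because $1+p\Z_p$ is torsion free, no character valued in $\Z_p^{\times}=\operatorname{Aut}(\Q_p/\Z_p)$ can satisfy both $c_2=(1-q)^{-1}\neq 1$ and $c_2^{q'}=1$; hence no homomorphism $G\to\Z_p$ has property $P$, and Lemma \ref{sufficient criterion for s(G)=0} gives $s(G)\neq 0$. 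Over $(\Z/q')^{\times}$ the two conditions are compatible, since $(1-q)^{-1}$ has order $q'/q$ there, so a property-$P$ character $G\to(\Z/q')^{\times}$ does exist.

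The main point is to pin down $s(G)$ exactly. Since $v_p\!\left(1+c_2+\cdots+c_2^{q'-1}\right)=v_p(q')$ at $c_2=(1-q)^{-1}$, the constraint $c_2^{q'}=1$ is solvable modulo $q''$ precisely when $q''\mid q'$; by property $Q$ (Lemma \ref{property Q}) the resulting property-$P$ character is unique at each such level, and these characters form a compatible system realizing $\Z/q'$ as a $G$-module through $\chi$. Thus the dualizing character admits a property-$P$ lift to level $q'$ but to no higher level. By the finite-level form of Lemma \ref{sufficient criterion for s(G)=0}, namely the argument of \cite{labute1966demuvskin}, which is local to $x_1,x_2$ and the exponents $q,q'$ and is insensitive to the symplectic part, this identifies the dualizing module as $I\cong\Z/q'$, i.e.\ $s(G)=q'$. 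Establishing this exact value, rather than only the divisibility $s(G)\mid q'$ furnished directly by the property-$P$ count, is the step I expect to be the main obstacle: it is precisely where the difference between the present relator and the $s(G)=0$ relator of Proposition \ref{group for every form} must be made to count.
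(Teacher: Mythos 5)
Your construction does not achieve $s(G)=q'$; in fact it gives $s(G)=0$. The error is the claim that the property-$P$ conditions decouple, i.e.\ that ``the block $x_1^{q}[x_1,x_2]$ imposes $c_1=1$'' while ``$x_2^{q'}$ imposes $c_2^{q'}=1$.'' Writing out $D(r)$ for your relator, the coefficient of $D(x_2)$ receives contributions from \emph{both} $[x_1,x_2]$ and $x_2^{q'}$, so (up to unit factors and commutator conventions) the conditions are
\[
\frac{c_1^{q}-1}{c_1-1}+c_1^{q-1}c_2^{-1}(1-c_2)=0,
\qquad
c_1^{q-1}c_2^{-1}(c_1-1)+c_1^{q}\,\frac{c_2^{q'}-1}{c_2-1}=0 ,
\]
a coupled system that does not force $c_1=1$ and that \emph{does} have a solution in $\Z_p^{\times}$. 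You can see this without any computation: $G_0=\langle x_1,x_2\mid x_1^{q}[x_1,x_2]x_2^{q'}\rangle$ is a finitely generated one-relator pro-$p$ group with nondegenerate cup product, hence a finitely generated Demushkin group; its dualizing module is $\Q_p/\Z_p$, so its character $\chi_0\colon G_0\to\Z_p^{\times}$ has property $P$ (first part of Lemma \ref{sufficient criterion for s(G)=0}). Your $G$ retracts onto $G_0$ by killing the $y_j,z_j$, and the pullback of $\chi_0$ along this retraction satisfies all the property-$P$ equations for $G$: the symplectic factors only require the values at $y_j,z_j$ to be $1$, and the equations in $(c_1,c_2)$ are exactly the property-$P$ equations for $G_0$. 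So $G$ admits a $\Z_p^{\times}$-valued character with property $P$, and Lemma \ref{sufficient criterion for s(G)=0} gives $s(G)=0$; your group has the same invariants as those produced in Proposition \ref{group for every form}.

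The missing idea is that $s(G)\neq 0$ can never be produced by a \emph{finite} perturbation of the relator: finitely many power terms are always absorbed by solving finitely many equations in $\Z_p^{\times}$, exactly as above. The obstruction must come from infinity, and that is what the paper's proof does: its relator is $x_0^{q}[x_0,x_1]\prod_{\lambda,n} x_{\lambda+2n}^{q'}[x_{\lambda+2n},x_{\lambda+2n+1}]$, with infinitely many $q'$-blocks on \emph{fresh} generator pairs spread through the whole basis. Freshness makes the conditions genuinely decouple, forcing $\sigma(x_{\lambda+2n+1})=(1-q')^{-1}$ at infinitely many generators; since a continuous character must send a converging basis to a set of values converging to $1$, this is impossible in $\Z_p^{\times}$, and possible in $(\Z/q'')^{\times}$ only when $q''$ divides $q'$. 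This yields $0\neq s(G)\leq q'$, and the explicit character into $(\Z/q')^{\times}$ together with Labute's Theorem 4 argument gives the reverse bound. (Your exclusion of $q=0$ is sound --- $q(G)=0$ does force $s(G)=0$, and in fact the paper's own relator has $q(G)=q'$ when $q=0$, so the $q=0$ case of the proposition is genuinely problematic --- but that is a side issue compared with the collapse of $s(G)$ above.)
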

	\begin{proof}
		Let $G\cong F/\langle r^G \rangle$ where $F$ is the free pro-$p$ group generated by $\{x_i\}_{i<\mu}$ and $r=x_0^q[x_0,x_1]\prod x^{q'}_{\lambda+2n}[x_{\lambda+2n},x_{\lambda+2n+1}]$ where $\lambda$ runs over the set of all limit ordinals $\lambda<\mu$. One immediately sees that $G$ is a pro-2 Demushkin group of rank $\mu$ and $q(G)=q$. We will prove that $s(G)=q'$. This is done in the same way as the proof of \cite[Theorem 4]{labute1966demuvskin}. First assume that there exists a homomorphism $\sigma:G\to (\Z_p/q'')^{\times}$ satisfying property $P$ for some $q''$ power of $p$ or 0. Applying $D_i=\delta_{ij}$ for all $i<\mu$ and setting $r$ to 0, we deduce that $\sigma(x_1)=(1-q)^{-1}$, $\sigma(x_{\lambda+2n})=1$ and $\sigma(x_{\lambda+2n+1})=(1-q')^{-1}$. for all $\lambda<\mu$ limit, and all $n\in \omega$. Since $\sigma$ is continuous, $\sigma|_\{x_i\}$ mush be convergent to 1, i.e, only finite number of elements are allowed to be outside a given open subgroup. That can be achieved only when $q''\leq q'$. In particular, since the character induced by the dualizing module has property $P$, we conclude that $0\ne s(G)\leq q'$. On the other hand, we can construct a homomorphism $\sigma: G\to (\Z/q')^{\times}$ by letting $\sigma(x_1)=(1-q)^{-1}$ and $\sigma(x_i)=1$ if $i \not =1$. One checks that every crossed homomorphism $D:F\to (\Z/q')^{\times}$ vanishes on $r$, and hence $\sigma$ has property $P$. By \cite{labute1966demuvskin}. proof of Theorem 4, this implies $q'\leq s(G)$ and we are done.
	\end{proof}
	In \cite{minavc1991demuvskin} it was proved that for every prime $p$ and $q\ne 2$ a power of $p$ or 0, a Demushkin group $G$ of rank $\aleph_0$ with $q(G)$ can be realized as a maximal pro-$p$ Galois group of some field if and only if $s(G)=0$. Unfortunately, the proof relies on the classification of pro-$p$ Demushkin groups of countable rank by these invariants, which is far from being the case for uncountable rank. However, we can point to a strong connection between general Demushkin groups and absolute Galois groups, as shown in the following theorem. The case $q=2$ is more delicate and we will deal with it later,
	\begin{thm}\label{demushkin as galois group}
		Let $p$ be a prime number, $q\ne 2$ be a power of $p$ or $0$, and $\mu>\aleph_0$ a cardinal. There exist a field $F$ whose absolute Galois group is a Demushkin group $G$ of rank $\mu$ and $q(G)=q$.
	\end{thm}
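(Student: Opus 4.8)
The plan is to realize $G$ as a genuine absolute Galois group by forcing it to coincide with a maximal pro-$p$ Galois group. The key elementary observation is that if $G_F$ happens to be a pro-$p$ group, then it equals its own maximal pro-$p$ quotient, $G_F=G_F(p)$. This is arranged by passing to $p$-Sylow subgroups: for any field $E$ with $\rho\in E$ and any $p$-Sylow subgroup $P\leq G_E$, the fixed field $F=\bar E^{P}$ satisfies $G_F=P$, which is pro-$p$; hence $G_F=G_F(p)=P$. Thus the theorem reduces to producing a single field $E$, containing the appropriate $p$-power roots of unity, whose absolute Galois group has a $p$-Sylow subgroup $P$ that is Demushkin of rank $\mu$ with $q(P)=q$. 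The invariant $s(P)=0$ then comes for free: the character $P\to\mathbb{Z}_p^{\times}$ induced by the action of $G_E$ on $\mu_{p^{\infty}}$ has property $P$, so since $q\neq 2$ Lemma \ref{sufficient criterion for s(G)=0} gives $s(P)=0$.

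The building block is Labute's theorem in \cite{labute1966demuvskin}: for a local field $K\ni\rho$ the $p$-Sylow subgroup of $G_K$ is Demushkin, there of countable rank. Its arithmetic invariant $q$ is dictated by the $p$-power roots of unity in $K$, so that $K\supseteq\mu_{p^{h}}$ yields $q=p^{h}$ and $K\supseteq\mu_{p^{\infty}}$ yields $q=0$. I would run Labute's computation over a larger base field $E$ that is still ``arithmetically local'' --- for instance a $p$-adically closed field of cardinality $\geq\mu$ (a model of the elementary theory of $K$ in the valued-field language) containing the prescribed roots of unity. The cohomological inputs then survive: $\operatorname{cd}_p(G_E)=2$, so $\operatorname{cd}(P)=2$; the group $H^2(P)$ stays one-dimensional over $\F_p$; and the cup product on $H^1(P)$, computed through the Hilbert symbols $(a)\cup(b)=\big(\tfrac{a,b}{E,\rho}\big)$, remains nondegenerate. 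Each of these is either first order or transfers from $K$, so the $p$-Sylow $P$ of $G_E$ is again Demushkin, its finitely generated approximations fitting the inverse-limit description of Theorem \ref{inverse limit}.

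The crux is the rank computation. Writing $F=\bar E^{P}$, the rank of $P$ equals $\dim_{\F_p}H^1(P)=\dim_{\F_p}\varinjlim_{E'}(E')^{\times}/((E')^{\times})^{p}$, the colimit running over the finite subextensions $E'/E$ of degree prime to $p$ (the open subgroups of $G_E$ containing $P$). For a local field this colimit is countable; the point is that by choosing $E$ of cardinality $\mu$ with cofinally many prime-to-$p$ extensions of unboundedly large local degree, the colimit should reach dimension exactly $\mu$ --- the upper bound from $|F|=\mu$ and the lower bound from the independent classes these extensions contribute. Fields of every infinite cardinality exist by the L\"owenheim--Skolem theorems, and one selects among them a model containing $\mu_{p^{h}}$ (resp. $\mu_{p^{\infty}}$) to fix $q(P)=q$.

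I expect the main obstacle to be precisely this last step: unlike the cohomological conditions, the rank $\dim H^1(P)$ is not a first order invariant of $E$ and so does not transfer automatically from the local case, and one must pin it to $\mu$ directly from the cardinality and the $p$-adically closed structure while keeping $\dim H^2=1$. These pull against each other, since enlarging $H^1$ naively tends to enlarge $\Br(F)$ and destroy the one-relator condition, so the delicate part is to enlarge the generating rank without creating new Brauer classes. A secondary wrinkle is the case $q=0$, where $E$ must contain $\mu_{p^{\infty}}$ and hence carries infinite ramification; there one must re-examine Labute's argument for this base to confirm the $p$-Sylow is Demushkin with $q=0$ rather than free.
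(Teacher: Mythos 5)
Your opening reduction is sound: passing to the fixed field of a $p$-Sylow subgroup makes the absolute Galois group pro-$p$, and this device is also used in the paper (it is built into the tower of Proposition \ref{last step}). The fatal problem is the central step, where you take $E$ to be a $p$-adically closed field of cardinality $\mu$ and hope the $p$-Sylow subgroup of $G_E$ acquires rank $\mu$. This cannot happen, and the ``main obstacle'' you flag at the end is not a gap to be filled but a provable obstruction. The very mechanism you invoke --- first-order transfer from the local field $K$ --- freezes precisely the invariant you need to grow. The statement $\dim_{\F_p}E^{\times}/(E^{\times})^{p}=d$ is a first-order sentence, so it transfers from $K$ and is \emph{finite}; by Prestel--Roquette theory every finite extension $E'$ of a $p$-adically closed field is again $p$-adically closed, elementarily equivalent to the corresponding finite extension of $\mathbb{Q}_p$, so each $(E')^{\times}/((E')^{\times})^{p}$ is likewise finite dimensional; and ``$E$ has exactly $m$ isomorphism classes of extensions of degree $n$'' is also first order, so $E$ has only countably many finite extensions in all. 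Hence your colimit $\varinjlim_{E'}(E')^{\times}/((E')^{\times})^{p}$ runs over a countable index set with finite-dimensional terms, and $\rank(P)\leq\aleph_0$ no matter how large $|E|$ is; a $p$-adically closed field with ``cofinally many prime-to-$p$ extensions of unboundedly large local degree'' simply does not exist. In fact the situation is even more rigid: by Pop's Galois characterization of $p$-adically closed fields (the $p$-adic analogue of Artin--Schreier), $G_E\cong G_{\mathbb{Q}_p}$ for \emph{every} $p$-adically closed field $E$, independently of its cardinality, just as every real closed field has absolute Galois group $\mathbb{Z}/2$. So this route can never leave the countable rank regime.

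The paper gets around this by abandoning arithmetically closed base fields and creating the rank by hand. One starts with $\mathbb{Q}(\mu_q)$ (resp.\ $\mathbb{Q}(\mu_{p^{\infty}})(x,y)$ for $q=0$) and adjoins $\mu$ algebraically independent transcendentals: their classes are independent in $K^{\times}/(K^{\times})^{p}$, which forces Kummer rank $\mu$, while a chosen Brauer class $A$ survives the base change (Proposition \ref{building the field}). Then the Min\'a\v{c}--Ware transfinite tower (Propositions \ref{main step} and \ref{last step}) alternates fixed fields of $p$-Sylow subgroups (making the group pro-$p$) with generic splitting fields, which kill every class of ${}_p\Br$ outside $\langle A\rangle$ and realize each $a\notin (F^{\times})^{p}$ in a nonsplit cyclic algebra $\left(\dfrac{a,h(a)}{F,\rho}\right)=A_F$, giving nondegeneracy of the cup product --- all while preserving $F^{p}\cap K=K^{p}$ (so the rank stays $\mu$) and $F\cap\mu_{p^{\infty}}=K\cap\mu_{p^{\infty}}$ (so $q(G_F)=q$). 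The tension you correctly identified --- that enlarging $H^1$ tends to enlarge $\Br$ --- is exactly what the generic splitting fields are there to control: they prune the Brauer group back to a single class without creating new $p$-th powers in the base.
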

	
	The proof will be done in several steps. First we need the following useful lemma, which is an immediate consequence of \cite[Proposition 19.6]{pierce1982associative}:
	\begin{lem}\label{useful lemma}
		Let $A$ be non trivial algebra in $\operatorname{Br}(K)$, and let $K(x)$ be a transcendental extension of $K$. Then the scalar extension algebra $A_{K(x)}:=A\otimes_KK(x)$ represents a non-trivial element of $\mathrm{Br}(K(x))$. 
	\end{lem}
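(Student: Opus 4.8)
The plan is to reduce the statement to the fact that base change along a purely transcendental extension sends division algebras to division algebras, which is the content of \cite[Proposition 19.6]{pierce1982associative}. By Artin--Wedderburn, the class of $A$ in $\Br(K)$ is represented by a unique central division $K$-algebra $D$, and $A$ is nontrivial in $\Br(K)$ precisely when $D\ne K$, i.e. $\dim_K D>1$. Since scalar extension $\Br(K)\to \Br(K(x))$ is a group homomorphism respecting Brauer equivalence, $A_{K(x)}$ and $D_{K(x)}:=D\otimes_K K(x)$ represent the same class. Thus it suffices to show that $D_{K(x)}$ is nontrivial, and for this it is enough to show that $D_{K(x)}$ is again a division algebra, because $\dim_{K(x)}D_{K(x)}=\dim_K D>1$ then forces it to be unsplit.

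First I would note that $D_{K(x)}$ is a central simple $K(x)$-algebra of the same dimension as $D$, so the only point to verify is the absence of zero divisors. To that end I would identify $D_{K(x)}$ with the localization $S^{-1}D[x]$ of the polynomial ring $D[x]=D\otimes_K K[x]$ at the central multiplicative set $S=K[x]\setminus\{0\}$, using that tensoring commutes with localization. The ring $D[x]$ is a domain: since $x$ is central, for nonzero $f,g\in D[x]$ with leading coefficients $a,b\in D$, the leading coefficient of $fg$ is $ab$, which is nonzero because $D$ is a division ring. Localizing a domain at a central set of nonzero elements again yields a domain, so $D_{K(x)}$ has no zero divisors.

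I would then conclude by the standard finiteness argument: a finite-dimensional algebra over a field that has no zero divisors is a division algebra, since for each nonzero $a$ the map of left multiplication by $a$ is an injective, hence bijective, $K(x)$-linear endomorphism, yielding a two-sided inverse for $a$. Therefore $D_{K(x)}$ is a division algebra of dimension $>1$ over $K(x)$, so $D_{K(x)}$, and hence $A_{K(x)}$, is nontrivial in $\Br(K(x))$.

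The main obstacle is precisely the domain property of $D_{K(x)}$, namely that base change to $K(x)$ introduces no zero divisors; everything else is formal. This is exactly the step where one invokes \cite[Proposition 19.6]{pierce1982associative} if one prefers to cite rather than reprove the leading-coefficient computation. An alternative route would be to exhibit a left inverse to $\Br(K)\to \Br(K(x))$ through the residue map at the place $x=0$ of $K(x)/K$, whose residue field is $K$; but this requires the machinery of unramified Brauer classes and is heavier than the direct argument above.
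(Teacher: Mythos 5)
Your proof is correct, and it is a fleshed-out version of what the paper does: the paper offers no argument at all for this lemma, simply declaring it an immediate consequence of Pierce, Proposition 19.6, which is precisely the statement that a central division algebra over $K$ remains a division algebra after the base change $K \hookrightarrow K(x)$ (equivalently, that $\Br(K)\to\Br(K(x))$ is injective). Your reduction via Artin--Wedderburn --- replace $A$ by its underlying division algebra $D$, observe that nontriviality of the class means $\dim_K D>1$, and that a division algebra of dimension $>1$ over $K(x)$ cannot be split --- is exactly how one derives the lemma from that citation, so to that extent you follow the intended route. What you add is a self-contained proof of the cited fact itself: the leading-coefficient argument showing $D[x]$ is a domain (valid because $x$ is central and $D$ has no zero divisors), the identification $D\otimes_K K(x)\cong S^{-1}D[x]$ with $S=K[x]\setminus\{0\}$ a central multiplicative set (so the localization of a domain is again a domain), and the standard fact that a finite-dimensional algebra over a field without zero divisors is a division algebra (injectivity of left and right multiplication makes both surjective, and the two one-sided inverses coincide). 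All of these steps check out. The trade-off is the usual one: the paper's version is a one-line appeal to a standard reference, while yours is elementary and self-contained at the cost of reproving a textbook result; your remark that one could instead use residue maps at $x=0$ is also sound but, as you say, heavier machinery than the problem requires.
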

	\begin{prop}\label{building the field}
		For every prime $p$ and $q\ne 2$ a power of $p$ of 0, there exists a field $K$ which satisfies:
		\begin{enumerate}
			\item $|K^{\times}/{K^p}^{\times}|=\mu$.
			\item $_p\operatorname{Br}(K)\ne 0$.
			\item $K\cap \mu_{p^{\infty}}=\mu_q$ if $q\ne 0$ or $\mu_{p^{\infty}}\subseteq K$ if $q=0$.
		\end{enumerate}
	\end{prop}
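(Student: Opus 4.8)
The plan is to build $K$ as a large purely transcendental extension of a carefully chosen countable \emph{seed} field $k_0$, so that adjoining indeterminates inflates $K^\times/(K^\times)^p$ to cardinality exactly $\mu$, while Lemma \ref{useful lemma} prevents the Brauer group from collapsing and transcendence keeps the roots of unity under control. Concretely, I would take
\[
k_0 = \begin{cases} \Q(\mu_q) & q \ne 0,\\ \Q(\mu_{p^\infty}) & q = 0, \end{cases} \qquad K = k_0(\{t_i : i \in \mu\}),
\]
the rational function field in $\mu$ indeterminates over $k_0$. In both cases $k_0$ is countable of characteristic $0$ with $\mu_p \subseteq k_0$, and $k_0 \cap \mu_{p^\infty}$ is exactly $\mu_q$ (resp.\ $\mu_{p^\infty}$): for the number field $\Q(\mu_q)=\Q(\mu_{p^h})$ this is because $[\Q(\mu_{pq}):\Q(\mu_q)]=p$, and for $q=0$ it is immediate. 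Keeping $k_0$ countable is essential, since $\mu$ may be smaller than $2^{\aleph_0}$ and an uncountable seed would overshoot condition (1).

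Conditions (1) and (3) are then the routine part. Since $|k_0|=\aleph_0$ and $\mu$ is uncountable, $|K|=\mu$, so $|K^\times/(K^\times)^p|\le \mu$; and a quick valuation argument in the polynomial ring shows the images of the $t_i$ are $\F_p$-independent in $K^\times/(K^\times)^p$, giving the reverse inequality and hence (1). For (3), a purely transcendental extension is regular, so $k_0$ is relatively algebraically closed in $K$; as every root of unity is algebraic over $\Q$, we obtain $K\cap \mu_{p^\infty}=k_0\cap \mu_{p^\infty}$, which is the desired $\mu_q$ (resp.\ $\mu_{p^\infty}$).

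The heart of the matter is condition (2), and here the case $q=0$ is the genuine obstacle. For $q\ne 0$ the seed $\Q(\mu_q)$ is a number field, so ${}_p\operatorname{Br}(\Q(\mu_q))\ne 0$ (take a class with two nonzero local invariants $\pm 1/p$), and Lemma \ref{useful lemma}, applied one indeterminate at a time and passed to the direct limit (using that $\operatorname{Br}$ commutes with direct limits of fields), shows this class stays nonzero in $\operatorname{Br}(K)$. For $q=0$ this route fails: the field $\Q(\mu_{p^\infty})$ has \emph{trivial} $p$-torsion Brauer group, because in the cyclotomic tower $\{\Q(\mu_{p^n})\}$ every local degree above $p$ (and eventually above each $\ell\ne p$, and at the archimedean places) becomes divisible by $p$, so every $p$-torsion class is annihilated in the limit. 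I would get around this by \emph{manufacturing} a Brauer class from a transcendental: fix an index $i_0$ and an element $a\in k_0^\times\setminus (k_0^\times)^p$, and consider the degree-$p$ symbol algebra $A=(a,t_{i_0})_\zeta$ over $k_0(t_{i_0})$, where $\zeta\in k_0$ is a primitive $p$-th root of unity. Its tame residue at the $t_{i_0}$-adic valuation (residue field $k_0$) is the class of $a$ in $k_0^\times/(k_0^\times)^p\cong H^1(k_0,\Z/p)$, which is nonzero by the choice of $a$; hence $A$ is a nonzero $p$-torsion element of $\operatorname{Br}(k_0(t_{i_0}))$. Applying Lemma \ref{useful lemma} to the purely transcendental extension $K/k_0(t_{i_0})$ then shows $A$ survives in $\operatorname{Br}(K)$, giving (2).

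Thus the main obstacle is precisely the verification of (2) for $q=0$: one cannot inherit a Brauer class from the seed field, and the class must instead be produced with the extra transcendental and detected by a residue computation. This symbol construction is in fact uniform — it also proves (2) for $q\ne 0$ — so I expect to run it in all cases and dispense with the number-field Brauer class entirely. Everything else reduces to the two standard facts that purely transcendental extensions are regular and that they preserve nonvanishing of Brauer classes, the latter being exactly Lemma \ref{useful lemma}.
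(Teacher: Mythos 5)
Your proof is correct, and its scaffolding --- a rational function field in $\mu$ indeterminates over a countable cyclotomic seed, with Lemma \ref{useful lemma} applied one variable at a time plus a direct-limit argument to carry a $p$-torsion Brauer class up to $K$, and transcendence arguments controlling $K^\times/(K^\times)^p$ and the roots of unity --- is the same as the paper's. The genuine divergence is at condition (2), which you rightly call the heart of the matter. The paper never manufactures its own Brauer class: it takes the seed $K'=\mathbb{Q}(\mu_q)$ for $q\ne 0$ and $K'=\mathbb{Q}(\mu_{p^\infty})(x,y)$ for $q=0$, citing Min\'a\v{c}--Ware for ${}_p\operatorname{Br}(K')\ne 0$, and then only shows the class survives the transfinite tower (successor steps by Lemma \ref{useful lemma}, limit steps by a matrix-units argument, which is your ``$\operatorname{Br}$ commutes with direct limits'' done by hand). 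Your observation that ${}_p\operatorname{Br}(\mathbb{Q}(\mu_{p^\infty}))=0$ is correct, and it is precisely why the paper's $q=0$ seed carries the two extra indeterminates $x,y$; your symbol algebra $(a,t_{i_0})_\zeta$ with nonzero tame residue at the $t_{i_0}$-adic valuation replaces that citation by a self-contained computation and works uniformly in $q$. What the paper's route buys is brevity and continuity with the countable-rank construction of Min\'a\v{c}--Ware that it generalizes; what yours buys is independence from that input, plus a cleaner verification of (3) via regularity of purely transcendental extensions instead of the paper's explicit rational-function computation. If you write yours up, make two small points explicit: the existence of $a\in k_0^\times\setminus(k_0^\times)^p$ when $k_0=\mathbb{Q}(\mu_{p^\infty})$ (e.g.\ a suitable rational prime, since $k_0/\mathbb{Q}$ is abelian while $\mathbb{Q}(a^{1/p})/\mathbb{Q}$ would not be Galois), and the residue fact you invoke (the tame residue of $(a,t)_\zeta$ at the $t$-adic place is the Kummer class of $a$), which uses $\mu_p\subseteq k_0$ --- true here in all cases.
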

	\begin{proof}

		Let $K'=\mathbb{Q}(\mu_q)$ for $q\ne 0,2$ or $ K'=\mathbb{Q}(\mu_p^{\infty})(x,y)$ for $q=0$. By \cite{minavc1991demuvskin}  $H^2(G_{K'})\cong {_p(\operatorname{Br}(K'))}\ne 0$. We choose some $0\ne A\in {_p(\operatorname{Br}(K'))}$. Now let $K=K'[x_i]_{i\in \mu}$ be the field constructed from $K'$ by recursion as follows: For every $\lambda<\mu$ we define:
		\begin{itemize}
			\item If $\lambda=\gamma+1$ then $K'_{\lambda}=K'_{\gamma}[x_{\lambda}]$ for $x_{\lambda}$ a transendental element over $K'_{\gamma}$.
			\item If $\lambda$ is a limit ordinal, then $K'_{\lambda}=\bigcup_{\gamma<\lambda} K'_{\gamma}[x_{\lambda}]$ for $x_{\lambda}$ a transcendental element over $\bigcup_{\gamma<\lambda} K'_{\gamma}$.
		\end{itemize}
		Let $K=\bigcup_{\lambda<\mu} K'_{\lambda}$. We claim that $K$ satisfies the required properties:
		\begin{enumerate}
			\item On one hand, $|K^{\times}/{K^{\times}}^p|\leq |K|=\mu$. On the other hand, we claim that $\{x_i{K^{\times}}^p\}_{i\in \mu}$ is an independent subset. Indeed, assume there is a finite subset $J=\{i_1,...,i_n\}\subseteq\mu$ such that $x_{i_1}^{\alpha_1}\cdots x_{i_n}^{\alpha_{n}}\in {K^{\times}}^p$ with $\alpha_i \in \mathbb N \backslash p\mathbb N$. It means that there is a finite susbset $J'=\{x_{j_1},...,x_{j_m}\}$ such that $x_{i_1}^{\alpha_1}\cdots x_{i_n}^{\alpha_{i_n}}=\left(\dfrac{\sum k_vx_{j_1}^{v_1}\cdots x_{j_m}^{v_m}}{\sum k_ux_{j_1}^{u_1}\cdots x_{j_m}^{u_m}}\right)^p$ where $v_i,u_i\in \mathbb{N}\cup\{0\}$, $k_u, k_v \in K'$ and the sums are finite. Taking $t=\max(J\cup J')$ we get a contradiction to the fact that $x_t$ is transcendental over $\bigcup_{\lambda<t} K'_{\lambda}$.
			\item Let $0\ne A\in _p{\operatorname{Br}(K')}$. We prove that $A_K=A \otimes_{K'}K \ne0$ by transfinite induction. Let $\lambda<\mu$. Assume that for every $\gamma<\lambda$ $A_{K'_{\gamma}}\ne 0$. If $\lambda$ is successor, the we get the result by Lemma \ref{useful lemma}. Otherwise, we claim that $A_{\bigcup_{\gamma<\lambda} K'_{\gamma} }\ne 0$. Indeed, assuming $A_{\bigcup_{\gamma<\lambda} K'_{\gamma}}=0$ it admits $n^2$ matrix units for $n=\deg A$. Every matrix unit is an expression of the form $\sum_{i=1}^{n^2} k_ia_i$ for $a_i\in A$, and hence belong to some $A_{K'_{\gamma}}$ for $\gamma<\lambda$. Hence there is some $\gamma<\lambda$ such that $A_{K'_{\gamma}}$ contains a set of $n^2$ matrix units, and thus it is trivial, a contradiction. Applying Lemma \ref{useful lemma} we get $A_{K'_{\lambda}}\ne 0$. Applying the same argument to $K=\bigcup_{\lambda<\mu} K_{\lambda}$, the result follows.
			\item Obviously, if $\rho_q\in K'$ then $\rho_q\in K$. Let $q$ be a prime of $p$ such that $\rho_q\notin K'$ and assume by contradiction by $\rho_q\in K$. Then there is some finite set $J=\{x_{j_1},...,x_{j_m}\}\subseteq \mu$ such that $\left(\dfrac{\sum k_vx_{j_1}^{v_1}\cdots x_{j_m}^{v_m}}{\sum k_ux_{j_1}^{u_1}\cdots x_{j_m}^{u_m}}\right)^q=1$, all the scalars are different then 0. Since $\rho_q\notin K'$, there exists some $i$ such that one of the $v_i$'s or $u_i$'s is different then 0. Take the maximal such $i$, we get a contradiction to the transcendental of $x_{j_i}$.
		\end{enumerate}
		
	\end{proof}
	Now	we follow the proof of \cite[Main Theorem]{minavc1991demuvskin} with the necessary adjustments to the uncountable rank. 
	\begin{lem}\cite[Lemma 2.1]{minavc1991demuvskin}
		Let $a\in K^{\times}\setminus {K^{\times}}^p$, $A\in {p\operatorname{Br}(K)}$ and $x$ a transcendental element
		over $K$. Then $A_{K(x)}$ and $\left (\dfrac{a,x}{K(x),\rho} \right)$ generate distinct nontrivial subgroups in  $\operatorname{Br}(K(x))$. 
	\end{lem}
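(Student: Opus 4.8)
The plan is to separate the two Brauer classes by means of the ramification (residue) theory on the rational function field $K(x)$. For each closed point $v$ of $\operatorname{Spec}K[x]$, corresponding to a monic irreducible $\pi_v\in K[x]$ with residue field $\kappa(v)=K[x]/(\pi_v)$, there is a residue homomorphism $\partial_v:\Br(K(x))\to H^1(\kappa(v),\Q/\Z)$, and these assemble into Faddeev's exact sequence
\[
0\to \Br(K)\to \Br(K(x))\xrightarrow{(\partial_v)_v}\bigoplus_v H^1(\kappa(v),\Q/\Z).
\]
In particular $\Br(K)\to\Br(K(x))$ is injective, so $A_{K(x)}\ne 0$ (this is also the content of Lemma \ref{useful lemma}), and every class coming from $\Br(K)$---in particular every scalar multiple of $A_{K(x)}$---has vanishing residue at each $v$.

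Next I would compute a single residue of the symbol class. Identifying $\mu_p\cong\Z/p\Z$ via $\rho$, the class $\left(\dfrac{a,x}{K(x),\rho}\right)$ equals the cup product $(a)\cup(x)$ in ${}_p\Br(K(x))$, and at a place $v$ the tame-symbol formula gives
\[
\partial_v\big((a)\cup(x)\big)=\overline{(-1)^{v(a)v(x)}\,a^{v(x)}x^{-v(a)}}\in\kappa(v)^\times/(\kappa(v)^\times)^p.
\]
Taking $v$ to be the place $\pi_v=x$, whose residue field is $K$ and at which $v(a)=0,\ v(x)=1$, this residue is $\bar a$, the class of $a$ in $K^\times/(K^\times)^p$; it is nonzero precisely because $a\notin (K^\times)^p$.

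These two facts finish the argument. The symbol class is ramified at $\pi_v=x$, hence nonzero and not in the image of $\Br(K)$, whereas $A_{K(x)}$ and all its scalar multiples are unramified there; consequently $\left(\dfrac{a,x}{K(x),\rho}\right)$ is not an $\F_p$-multiple of $A_{K(x)}$. Since both classes are killed by $p$, they span distinct one-dimensional subspaces of the $\F_p$-vector space ${}_p\Br(K(x))$, i.e.\ they generate distinct nontrivial subgroups of $\Br(K(x))$, as claimed. I expect the only real obstacle to be expository: one must invoke the residue calculus on $\Br(K(x))$---the injectivity of $\Br(K)\to\Br(K(x))$, the vanishing of $\partial_v$ on constant classes, and the tame-symbol formula---rather than reprove it, after which the single residue at $x=0$ separates the constant class $A_{K(x)}$ from the symbol automatically.
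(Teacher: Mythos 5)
Your argument is correct, but note first that the paper itself contains no proof of this statement: it is quoted directly from Min\'a\v{c}--Ware \cite[Lemma 2.1]{minavc1991demuvskin}, so the only comparison available is with their original argument, which is likewise a ramification argument (phrased via the structure of the Brauer group of the Laurent series field $K((x))$, whose tame part splits into an unramified summand $\Br(K)$ and a ramified summand $H^1(K,\Q/\Z)$); your residue at the place $\pi_v=x$ is exactly the projection onto the ramified summand, so your route is the same idea in Faddeev-residue language rather than a genuinely different one. Two points deserve to be made explicit. First, you do not actually need the exactness of Faddeev's sequence (which is usually stated for perfect $K$, and whose $p$-part in characteristic $p$ is delicate): your proof only uses the injectivity of $\Br(K)\to\Br(K(x))$ (the paper's Lemma on scalar extension), the vanishing of residues on constant classes, and the tame-symbol formula, all of which hold here; it would be cleaner to invoke just these three facts instead of displaying the sequence. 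Second, two hypotheses are used silently and should be flagged: $A\ne 0$ (otherwise $A_{K(x)}$ generates the trivial subgroup; nontriviality is also what the cited scalar-extension lemma requires), and the standing assumption $\rho\in K$, hence $\operatorname{char}(K)\ne p$, which is what makes the residue maps defined on $p$-torsion classes and gives the Kummer identification $H^1\bigl(\kappa(v),\Z/p\bigr)\cong \kappa(v)^\times/(\kappa(v)^\times)^p$ that you use at $x=0$. With those caveats, the computation $\partial_x\bigl((a)\cup(x)\bigr)=\bar a\ne 0$, against $\partial_x$ of every multiple of $A_{K(x)}$ being $0$, does separate the two cyclic subgroups of order $p$, and the proof is complete.
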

	
	\begin{lem}\label{step 2}
		Let $A\in {p\operatorname{Br}(K)}$ and $B$ be an algebra in $\operatorname{Br}(K)$ whose order is a multiple of $p$. If $A\notin \langle B\rangle$ then there is an extension $L$ of $K$ of the same cardinality as $K$ such that $A_L=B_L\ne 1$ in $\operatorname{Br}(L)$. 
	\end{lem}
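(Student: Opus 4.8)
The plan is to build $L$ as an iterated function field of Severi--Brauer varieties and to control the kernels of the restriction maps by Amitsur's theorem: for a central division algebra $D$ over a field $k$, the kernel of $\Br(k)\to\Br(k(\mathrm{SB}(D)))$ is the cyclic subgroup $\langle [D]\rangle$, and $k(\mathrm{SB}(D))$ is a finitely generated (hence equicardinal, since $K$ is infinite) extension of $k$. I would apply this twice: first to force $B$ to have order exactly $p$, then to glue $A$ and $B$ together. Note that since $1\in\langle B\rangle$, the hypothesis $A\notin\langle B\rangle$ already gives $A\ne 1$, and as $A\in{}_p\Br(K)$ it has order exactly $p$.

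For the first step, if $B$ already has order $p$ I pass directly to the second step; otherwise $B^p\ne 1$, and I let $D_1$ be the division algebra with $[D_1]=B^p$ and set $L_1=K(\mathrm{SB}(D_1))$. By Amitsur the kernel of restriction to $L_1$ is $\langle B^p\rangle$, the unique index-$p$ subgroup of the cyclic group $\langle B\rangle$; hence $B_{L_1}$ has order exactly $p$, because $B\notin\langle B^p\rangle$. The point requiring care is that $A$ must survive: the only element of order dividing $p$ in $\langle B^p\rangle$ is the unique order-$p$ element of $\langle B\rangle$, so $A\in\langle B^p\rangle$ would force $A\in\langle B\rangle$, contrary to hypothesis; thus $A_{L_1}\ne 1$. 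The same congruence bookkeeping shows $A_{L_1}\notin\langle B_{L_1}\rangle$: were $A B^{-j}\in\langle B^p\rangle\subseteq\langle B\rangle$ for some $j$, we would again get $A\in\langle B\rangle$.

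For the second step, over $L_1$ both $A_{L_1}$ and $B_{L_1}$ are nontrivial of order $p$ with $A_{L_1}\notin\langle B_{L_1}\rangle$, so $\langle A_{L_1},B_{L_1}\rangle\cong(\Z/p)^2$ and the class $A_{L_1}B_{L_1}^{-1}$ has order $p$. Let $D_2$ be the corresponding division algebra and put $L=L_1(\mathrm{SB}(D_2))$. By Amitsur the kernel of restriction to $L$ is $\langle A_{L_1}B_{L_1}^{-1}\rangle$, whence $A_L=B_L$ in $\Br(L)$. Finally $A_L\ne 1$, i.e.\ $A_{L_1}\notin\langle A_{L_1}B_{L_1}^{-1}\rangle$, is an elementary linear-independence computation in $(\Z/p)^2$: if $A_{L_1}$ lay on the line spanned by $A_{L_1}B_{L_1}^{-1}$ then $A_{L_1}$ and $B_{L_1}$ would be dependent, contradicting $A_{L_1}\notin\langle B_{L_1}\rangle$. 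Two applications of the function-field construction keep $|L|=|K|$, completing the argument.

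The main obstacle is the first step rather than the gluing: one cannot simply split $AB^{-1}$ over $K$ directly, because when $B$ has composite order the cyclic kernel $\langle AB^{-1}\rangle$ can contain $A$ itself (this already happens at the level of abstract cyclic groups), so splitting would annihilate $A$. The essential device is therefore the preliminary reduction of $B$ to order $p$, together with the bookkeeping with cyclic subgroups that guarantees $A$ never falls into the kernel of either restriction map; the rest is a routine invocation of Amitsur's theorem.
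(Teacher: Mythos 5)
Your argument is correct: both applications of Amitsur's theorem, the bookkeeping in $(\Z/p)^2$, and the cardinality count all check out (and, as you note, $A\ne 1$ forces $K$ to be infinite, by Wedderburn). It also runs on the same engine as the paper, whose entire proof is a citation of \cite[Lemma 2.2]{minavc1991demuvskin} together with the remark that generic splitting fields --- function fields of Severi--Brauer varieties --- preserve cardinality. The genuine difference is that the cited argument is precisely the one-step construction you reject in your final paragraph, and your reason for rejecting it is mistaken: under the hypotheses of the lemma one always has $A\notin\langle AB^{-1}\rangle$, so splitting $AB^{-1}$ directly over $K$ already works, and your preliminary reduction of $B$ to order $p$ is superfluous. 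Indeed, suppose $A=(AB^{-1})^k=A^kB^{-k}$, i.e.\ $B^k=A^{k-1}$. If $p\nmid k-1$ then, $A$ having order exactly $p$, $A$ is a power of $A^{k-1}=B^k$, hence $A\in\langle B\rangle$, a contradiction; if $p\mid k-1$ then $B^k=A^{k-1}=1$, so $\operatorname{ord}(B)\mid k$, and since $p\mid \operatorname{ord}(B)$ we get $p\mid k$, contradicting $p\mid k-1$. Your ``abstract cyclic groups'' counterexample (say $a$ of order $2$ and $b$ of order $3$, where $a=(ab^{-1})^3$) requires $\operatorname{ord}(B)$ to be prime to $p$, which is exactly what the hypothesis ``the order of $B$ is a multiple of $p$'' rules out --- indeed, that is why this hypothesis appears in the statement at all. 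So your two-step detour is sound, and it has the minor virtue of being self-contained where the paper's proof is a pointer to the literature, but the obstacle it was designed to circumvent does not exist, and the standard proof is the shorter one-step version.
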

	\begin{proof}
		This is the same proof of \cite[Lemma 2.2]{minavc1991demuvskin}, noticing that a generic splitting field over $K$ has the same cadinality as $K$.
	\end{proof}
	
	\begin{lem}\cite[Lemma 2.3]{minavc1991demuvskin}.\label{step 3}
		Let $a\in K^{\times}/{K^{\times}}^p$, $A\in {p\operatorname{Br}(K)}$ and $K_1$ be the field obtained from $K$ by first adjoining a transcendental $x$ and then forming the generic splitting field of $A_{K(x)}\otimes \left( \dfrac{a^{-1},x}{K(x),\rho}\right)$. If $b\in K^{\times}\setminus {K^{\times}}^p$ then $b\notin K_1^p$.
	\end{lem}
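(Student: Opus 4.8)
The plan is to prove the equivalent statement that $T^p-b$ has no root in $K_1$, i.e.\ that $\sqrt[p]{b}\notin K_1$. Write $F=K(x)$ and let $D=A_F\otimes_F\left(\dfrac{a^{-1},x}{F,\rho}\right)$, a central simple $F$-algebra; by definition the generic splitting field $K_1$ is the function field $F(\operatorname{SB}(D))$ of the Severi--Brauer variety of $D$. So the whole problem reduces to controlling which \emph{algebraic} elements this function field adjoins to $F$, and the specific arithmetic of $D$ will turn out to be irrelevant for the present assertion.

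The key input I would isolate is that $\operatorname{SB}(D)$ is geometrically integral over $F$: over an algebraic closure it becomes $\mathbb{P}^{n-1}$ (with $n=\deg D$), which is integral. Consequently $F(\operatorname{SB}(D))/F$ is a regular field extension, and in particular $F$ is algebraically closed in $K_1$. This is the one substantive fact in the argument, and it is the step I expect to be the main obstacle, although it is entirely standard for Severi--Brauer varieties (one may also recall that the generic point furnishes a $K_1$-rational point, so $D_{K_1}$ is indeed split, justifying the name).

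Granting this, the conclusion would follow in two short moves. First, since $K(x)/K$ is purely transcendental, a $b\in K^\times$ lying in $(K(x)^\times)^p$ already lies in $(K^\times)^p$: writing $b=(f/g)^p$ with coprime $f,g\in K[x]$ gives $f^p=b\,g^p$ with $b$ a unit of $K[x]$, so $f$ and $g$ are associate and hence constant. As $b\notin(K^\times)^p$ by hypothesis, this yields $b\notin(F^\times)^p$, so $\sqrt[p]{b}\notin F$. Second, suppose toward a contradiction that $b=c^p$ for some $c\in K_1$; then $c$ is a root of $T^p-b$, hence algebraic over $F$, so by the relative algebraic closedness of $F$ in $K_1$ we would get $c\in F$ and therefore $b\in(F^\times)^p$, a contradiction. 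Hence $b\notin K_1^p$.

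Finally, I would note that no characteristic pathology intervenes: $\mu_q\subseteq K$ forces $\operatorname{char}(K)\ne p$, so $T^p-b$ is separable and the relatively algebraically closed (indeed regular) extension $K_1/F$ cannot contain $\sqrt[p]{b}$ unless $F$ already does. As remarked, this argument uses nothing about $D$ beyond its being a central simple $F$-algebra; the particular shape $A_{K(x)}\otimes\left(\dfrac{a^{-1},x}{K(x),\rho}\right)$ matters only for the companion results such as Lemma~\ref{step 2}, not for the non-$p$-th-power statement proved here.
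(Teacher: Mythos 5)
Your proof is correct. Note that the paper itself offers no proof of this lemma: it is quoted, with citation, from Min\'a\v{c}--Ware \cite[Lemma 2.3]{minavc1991demuvskin}, and your argument --- identifying the generic splitting field $K_1$ as the function field of the Severi--Brauer variety of $D$ over $K(x)$, so that $K_1/K(x)$ is regular and in particular $K(x)$ is algebraically closed in $K_1$, combined with the elementary observation that the purely transcendental extension $K(x)/K$ creates no new $p$-th powers --- is precisely the standard proof of that cited result, including the (correct) observation that the particular form of the algebra $A_{K(x)}\otimes\left(\dfrac{a^{-1},x}{K(x),\rho}\right)$ plays no role in this statement.
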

	\begin{prop}\label{main step}
		Let $K$ be a field of cardinality $\mu$ and $0\ne A\in {p\operatorname{Br}(K)}$.
		Then there is a  field extension $L$ of $K$ of the same cardinality, such that
		\begin{enumerate}
			\item For each $a\in K^{\times} \setminus {K^{\times}}^p$ there exists $h(a)\in L$ with $\left(\dfrac{a,h(a)}{L,\rho}\right)=A_L\ne 1$ and 
			\item if $B\in \operatorname{Br}(K)$ has order divisible by $p$ then either $A_L\in \langle B_L\rangle$ or the order of $B_L$
			is coprime to p. 
		\end{enumerate}
	\end{prop}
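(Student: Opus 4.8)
The plan is to build $L$ as the union of an increasing tower $K=L_0\subseteq L_1\subseteq\cdots\subseteq L_\beta\subseteq\cdots$ constructed by transfinite recursion, each $L_\beta$ of cardinality $\mu$, along which $A$ is kept nontrivial. First I would record two counting facts: the set of cosets $K^\times/(K^\times)^p$ has size $\mu$ by Proposition~\ref{building the field}, while the set of Brauer classes of order divisible by $p$ has size at most $\mu$, since each is represented by a finite-dimensional central simple algebra and hence by finitely many structure constants in $K$. So both families of ``tasks'' have size $\le\mu$, and I would dispatch them in two consecutive phases, each a recursion of length $\mu$.

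\textbf{Phase 1 (realizing the symbols).} Enumerate coset representatives $\{a_\gamma:\gamma<\mu\}$ of $K^\times/(K^\times)^p$. At a successor stage $\gamma+1$ I apply Lemma~\ref{step 3} to the current field $L_\gamma$, the class $A_{L_\gamma}$ and the element $a_\gamma$: adjoining a transcendental $x$ and splitting $A_{L_\gamma(x)}\otimes\left(\tfrac{a_\gamma^{-1},x}{L_\gamma(x),\rho}\right)$ produces $L_{\gamma+1}$ with $A_{L_{\gamma+1}}=\left(\tfrac{a_\gamma,x}{L_{\gamma+1},\rho}\right)$, so $h(a_\gamma):=x$ realizes the symbol. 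The preceding \cite[Lemma 2.1]{minavc1991demuvskin} forces $A_{L_\gamma(x)}$ and $\left(\tfrac{a_\gamma,x}{L_\gamma(x),\rho}\right)$ to generate distinct nontrivial subgroups, which is exactly what prevents $A$ from dying in this generic splitting field; and Lemma~\ref{step 3} guarantees that every $b\in L_\gamma^\times\setminus(L_\gamma^\times)^p$, in particular every untreated representative $a_{\gamma'}$, remains a non-$p$-th power in $L_{\gamma+1}$, so Lemma~\ref{step 3} stays applicable at every later stage. At a limit $\lambda$ I set $L_\lambda=\bigcup_{\gamma<\lambda}L_\gamma$ and preserve both invariants by a finite-witness argument: a $p$-th root of $b$, or a set of matrix units splitting $A$, would already appear at some earlier stage (the matrix-unit argument of Proposition~\ref{building the field}). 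Let $M$ be the field produced at the end of Phase~1.

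\textbf{Phase 2 (comparing with the $B$'s).} Enumerate the classes of $\operatorname{Br}(K)$ of order divisible by $p$ and recurse starting from $M$. At the stage treating a class $B$, over the current field $N$ I distinguish three cases according to $B_N$: if $\operatorname{ord}(B_N)$ is coprime to $p$, or if $A_N\in\langle B_N\rangle$, I do nothing; otherwise $p\mid\operatorname{ord}(B_N)$, $A_N\in{}_p\Br(N)$ and $A_N\notin\langle B_N\rangle$, so Lemma~\ref{step 2} applies and yields an extension over which $A=B$ is nontrivial, whence $A\in\langle B\rangle$. In each case the achieved alternative persists to the final $L$: ``order coprime to $p$'' is stable because orders only drop under extension, and ``$A\in\langle B\rangle$'' is stable because Brauer relations are preserved under scalar extension. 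The key point is that nothing in Phase~2 can undo Phase~1: each $a_\gamma$ satisfies $\left(\tfrac{a_\gamma,h(a_\gamma)}{L,\rho}\right)=A_L$, so were $a_\gamma$ to become a $p$-th power the left side would be trivial, contradicting $A_L\ne1$, which Lemma~\ref{step 2} keeps in force. This is precisely why Phase~2 needs only the invariant $A\ne1$ and no separate non-$p$-th-power bookkeeping.

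\textbf{Main obstacle.} The substantive work is all in the verifications rather than in a new construction: checking that $A$ survives every generic splitting field (successor steps via \cite[Lemma 2.1]{minavc1991demuvskin} and Lemma~\ref{step 2}, limit steps via the finite/matrix-unit witness of Proposition~\ref{building the field}); keeping the cardinality equal to $\mu$ (transcendental extensions and generic splitting fields preserve $|K|=\mu$, as noted in Lemma~\ref{step 2}, and a union of $\mu$ such fields still has size $\mu$); and confirming that treating the $B$'s in an order where their $p$-parts may already have dropped still delivers the stated dichotomy for \emph{every} $B\in\operatorname{Br}(K)$ of order divisible by $p$. Finally I would assemble the data: $h$, defined on representatives, extends to all of $K^\times\setminus(K^\times)^p$ because $\left(\tfrac{ac^p,h}{L,\rho}\right)=\left(\tfrac{a,h}{L,\rho}\right)$ in ${}_p\Br(L)$, giving property~(1), while the Phase~2 analysis gives property~(2).
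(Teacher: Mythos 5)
Your proposal is correct and takes essentially the same route as the paper's own proof: a two-phase transfinite recursion, first running Lemma \ref{step 3} over an enumeration of representatives of $K^{\times}/{K^{\times}}^p$ to realize the symbols, then running Lemma \ref{step 2} over an enumeration of the relevant Brauer classes, with limit stages handled by the finite-witness (matrix-unit) arguments from Proposition \ref{building the field} and the same cardinality bookkeeping. The only cosmetic differences are that the paper enumerates all central simple $K$-algebras rather than only the classes of order divisible by $p$, and leaves implicit both the persistence checks and the extension of $h$ from coset representatives to all of $K^{\times}\setminus{K^{\times}}^p$, which you spell out.
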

	\begin{proof}
		Let $S=\{a_{\alpha}\}_{\alpha<\delta}$ be a set of representatives of $K^{\times}/ {K^{\times}}^p$ order by an ordinal $\delta$ for some ordinal $\delta\leq \mu$. We construct a chain of fields $K_{\alpha}$ for all $\alpha<\delta$ by the following: Assuming we built $K_{\alpha}$ for all $\alpha<\beta$ such that for all $\alpha<\beta$ there exists $h(a_{\alpha})$ such that $\left(\dfrac{a_{\alpha},h(a_{\alpha})}{K_{\gamma},\rho}\right)=A_{K_{\gamma}}\ne 1$ for all $\alpha\leq\gamma<\beta$. If $\beta=\epsilon+1$, build $K_{\beta}$ over $K_{\epsilon}$ as in Lemma \ref{step 3}. If $\beta$ is limit, take $M=\bigcup_{\alpha<\beta}K_{\alpha}$. Notice that, as was proved in the proof of Proposition \ref{building the field}, if $A_{K_\alpha}\ne 1$ for all $\alpha$ then $A_M\ne 1$, and if $b\notin {K_{\alpha}^{\times}}^p$ for all $\alpha$, then $b\notin {M^{\times}}^p$, so we can define $K_\beta$ over $M$ for $a_{\beta}$ as in Lemma \ref{step 3}. 
		
		Now define $M= \bigcup_{\alpha<\delta}K_{\alpha}$. Every $n$-dimensional algebra over $K$ is determined by $n^3$ structure constants, hence we can enumerate the central simple algebras over $K$ by some cardinal $\delta$, $\delta\leq \mu$. We build a chain of field extensions $\{M_{\alpha}\}_{\alpha<\delta}$ as follows: Assume we built $M_{\alpha}$ for all $\alpha<\beta$ such that for all $K$-csa $B_{\alpha}$, $(B_{\alpha})_{M_\gamma}\ne 1$ for all $\alpha\leq \gamma<\beta$, and if the order of $(B_{\alpha})_{\delta}$ is divisible by $p$ for some $\delta<\alpha$ then $A_{M_{\alpha}}\in \langle (B_{\alpha})_{M_\alpha}\rangle$. Then if $\beta=\alpha+1$ we build $M_{\beta}$ over $M_{\alpha}$ for  $B_{\beta}$ as in Lemma \ref{step 2}. If $\beta$ is limit we set $L=\bigcup_{\alpha<\beta}M_{\alpha}$.  Notice that all the required properties are satisfied by $L$. Then we build $M_{\beta}$ over $L$ for $B_{\beta}$ as in Lemma \ref{step 2}.  Eventually take $L=\bigcup_{\alpha<\delta} M_{\alpha}$. Notice that $|L|=|K|$. 
	\end{proof}
	The last step we need is:
	\begin{prop}\label{last step}
		Let $K$ be a field such that $_p\operatorname{Br}(K)\ne 0$ and fix a nontrivial algebra $A\in _p\operatorname{Br}(K)$. Then $K$ admits a field extension $F$ of the same cardinality, such that:
		\begin{enumerate}
			\item $A_F\ne 1$ in $\operatorname{Br}(F)$.
			\item $_p\operatorname{Br}(F)=\langle A_F\rangle$.
			\item For all $a\in F^{\times}\setminus {F^{\times}}^p$ there exits an element $b\in F^{\times}$ such that $\left(\dfrac{a,b}{F,\rho}\right)\ne 1$.
			\item $F^p\cap K=K^p$.
			\item $F\cap \mu_{p^{\infty}}=K\cap \mu_{p^{\infty}}$.
		\end{enumerate}
	\end{prop}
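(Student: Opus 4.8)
The plan is to build $F$ as the union of a countable tower
\[ K = F_0 \subseteq F_1 \subseteq F_2 \subseteq \cdots, \]
where $F_{n+1}$ is obtained by applying Proposition \ref{main step} to the field $F_n$ together with the algebra $A_{F_n}$. This is legitimate because $A_{F_0}=A\ne 1$ and, by part 1 of Proposition \ref{main step}, $A_{F_{n+1}}\ne 1$ for all $n$; moreover $A_{F_n}$ has order exactly $p$ at every stage, since $A\in {_p\Br(K)}$. Setting $F=\bigcup_n F_n$, each $F_n$ has cardinality $\mu$, so $|F|=\aleph_0\cdot\mu=\mu$. The reason for iterating $\omega$ times, rather than applying the main step once, is that parts 1 and 2 of Proposition \ref{main step} only control the elements and algebras already defined over the \emph{input} field; passing to the union forces every element of $F^\times$ and every central simple $F$-algebra to appear at some finite stage $F_n$, and hence to be treated at stage $n+1$ (the standard ``catch your tail'' device).

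I would first record that the construction preserves $p$-th power classes. The extension produced by Proposition \ref{main step} is assembled from generic-splitting-field steps of the type in Lemma \ref{step 3}, each of which keeps non-$p$-th-powers non-$p$-th-powers, together with unions at limit stages, which do the same. Hence $F_{n+1}^p\cap F_n=F_n^p$, and a telescoping induction gives $F_n^p\cap K=K^p$ for all $n$. Since a $p$-th power in $F$ that lies in $K$ already lies in some $F_n^p$, this yields property 4. Property 5 then follows: if $\mu_{p^\infty}\subseteq K$ it is immediate, while if $K\cap\mu_{p^\infty}=\mu_q$ with $q=p^k\ge p$, a generator $\rho_{p^k}$ is not a $p$-th power in $K$ (a $p$-th root of it would be a primitive $p^{k+1}$-th root of unity, forcing $\rho_{p^{k+1}}\in K$), so by property 4 it is not a $p$-th power in $F$, whence $\rho_{p^{k+1}}\notin F$ and $F\cap\mu_{p^\infty}=\mu_q=K\cap\mu_{p^\infty}$.

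Properties 1 and 3 are then quick. A trivialization of $A_F$ would be witnessed by finitely many elements of $F$, all lying in some $F_n$, giving $A_{F_n}=1$, a contradiction; so $A_F\ne 1$, which is property 1. For property 3, given $a\in F^\times\setminus{F^\times}^p$, pick $n$ with $a\in F_n$; then $a\notin{F_n^\times}^p$ (otherwise $a\in{F^\times}^p$), so part 1 of Proposition \ref{main step} at stage $n$ furnishes $h(a)\in F_{n+1}$ with $\left(\dfrac{a,h(a)}{F_{n+1},\rho}\right)=A_{F_{n+1}}\ne 1$, and extending scalars to $F$ gives $\left(\dfrac{a,h(a)}{F,\rho}\right)=A_F\ne 1$.

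The main work — and the step I expect to be most delicate — is property 2, the identity $_p\Br(F)=\langle A_F\rangle$. Let $1\ne C\in{_p\Br(F)}$, so $C$ has order exactly $p$. As a finite-dimensional algebra, $C$ is defined over some $F_n$, say $C=B_F$ for a central simple $F_n$-algebra $B$. Since the order of $B_F$ divides that of $B$, the order of $B$ is divisible by $p$, so part 2 of Proposition \ref{main step} at stage $n$ applies: either the order of $B_{F_{n+1}}$ is coprime to $p$, or $A_{F_{n+1}}\in\langle B_{F_{n+1}}\rangle$. The first alternative would make the order of $C=B_F$ coprime to $p$, contradicting that it equals $p$; so the second holds, and extending scalars to $F$ gives $A_F\in\langle C\rangle$. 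As $A_F$ and $C$ both have order $p$, this forces $\langle C\rangle=\langle A_F\rangle$, so $C\in\langle A_F\rangle$. The reverse inclusion being clear, property 2 follows. The crux throughout is keeping $A_F$ of order exactly $p$ and correctly tracking how the orders of Brauer classes drop under the scalar extensions $F_n\hookrightarrow F_{n+1}\hookrightarrow F$; it is precisely here that part 2 of the main step does the decisive work.
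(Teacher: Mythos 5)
Your proposal is correct for the proposition as stated, and its skeleton is the same as the paper's: iterate Proposition \ref{main step} countably many times, take the union, and catch every element and every central simple algebra at some finite stage. Your treatments of properties 1, 2, 3 and 5 match the paper's in substance. One structural difference is worth recording: the paper's tower is not simply $L_{n+1}=$ (main step applied to $L_n$); at each stage it first replaces $L_n$ by the fixed field $L_n'$ of a $p$-Sylow subgroup of $G_{L_n}$ (nontriviality of $A$ survives this because restriction to a Sylow subgroup is injective on cohomology), and only then applies Proposition \ref{main step}. This interleaved Sylow step is invisible in properties 1--5, and your proof genuinely does not need it; but it is exactly what makes the limit field have pro-$p$ absolute Galois group, which is how Proposition \ref{last step} is used in the proof of Theorem \ref{demushkin as galois group}. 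So your construction proves the proposition, but would not support that theorem's proof as written; the Sylow step should be retained if the proposition is to serve its downstream purpose.

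The one inexact point is your justification of property 4. The extension produced by Proposition \ref{main step} is \emph{not} assembled only from steps of the type in Lemma \ref{step 3}: its second phase runs over central simple algebras and forms generic splitting fields as in Lemma \ref{step 2}, for which preservation of non-$p$-th powers is not stated. (It does hold, since a generic splitting field is a regular extension of its base, so a $p$-th root of a base element would be algebraic over the base and hence lie in it; but this is an extra argument you did not supply.) You can bypass the issue entirely with tools you already use: for $a\in K^{\times}\setminus {K^{\times}}^p=F_0^{\times}\setminus {F_0^{\times}}^p$, part 1 of Proposition \ref{main step} at stage $0$ gives $h(a)\in F_1$ with $\left(\dfrac{a,h(a)}{F,\rho}\right)=A_F\ne 1$, and an element of ${F^{\times}}^p$ has trivial Kummer class, hence all its cup products (cyclic algebras) vanish; so $a\notin {F^{\times}}^p$. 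This is precisely how the paper deduces property 4 from the property-3 argument, with no appeal to the internal structure of the main step.
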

	\begin{proof}
		This is the same construction presented in \cite[Theorem 2.5]{minavc1991demuvskin}. Let us define a series of field extension $K=L_0\subseteq L_1\subseteq L_2\subseteq ...$ such that $A_{L_i}\ne 0$, as follows: Assume we already defined $L_i$. Let $L_i'$ be the fixed field of some $p$-Sylow subgroup $S_p$ of $G_{L_i}$. By \cite[I-11]{serre1979galois}, the restriction map $H^k(G)\to H^k(S)$ from the $k$'th cohomology group of a profinite group to its $p$-Sylow subgroup is injective for all $K$. Hence $A_{L'_i}\ne 0$. Now define $L_{i+1}$ as in Proposition \ref{main step}, where $K=L'_i$ and $A=A_{L'_i}$. Let $F=\bigcup_{i\in \omega} L_i$. We claim that $F$ has properties (1)-(5).
		\begin{enumerate}
			\item Since $A_{L_i}\ne 1$ for all $i$, $A_F\ne 1$. Let $B\in _p\operatorname{Br}(F)$.
			\item  By \cite[p.10]{pierce1982associative} there is some $i$ such that $B={B_i}_F$ for $B_i\in {_p\operatorname{Br}(L_i)}$. ${B_i}_{L_{i+1}}$ has order which is a multiple of $p$, so by Proposition \ref{main step} $A_{L_{i+1}}\in \langle {B_i}{L_{i+1}}$. Hence $A_F\in \langle B_F=B\rangle$. But as both algebras have order $p$, $B\in {_p\operatorname{Br}}(F)$. 
			\item Let $a\in F$ There exists some $i$ such that $a\in L_i'$. By Proposition \ref{main step} there exists some $h(a)\in L_{i+1}$ such that $\left(\dfrac{a,h(a)}{L_{i+1},\rho}\right)\ne 1$. We claim that for every $i+j$, $\left(\dfrac{a,h(a)}{L_{i+j},\rho}\right)\ne 1$. Indeed, if $\left(\dfrac{a,h(a)}{L_{i+j-1},\rho}\right)\ne 1$ then by same observation as in the previous property, $\left(\dfrac{a,h(a)}{L'_{i+j-1},\rho}\right)\ne 1$ and hence by Lemma \ref{useful lemma} $\left(\dfrac{a,h(a)}{L_{i+j},\rho}\right)\ne 1$. Hence, $\left(\dfrac{a,h(a)}{F,\rho}\right)\ne 1$.
			\item Let $a\in K^{\times}\setminus {K^{\times}}^p$. If $a\in {F^{\times}}^p$ then the cohomology class of $a$ in $H^1(G_F)$ is trivial, and hence by the identification of the cup product $(a)\cup(b)=\left(\dfrac{a,b}{F,\rho}\right)=1$, a contradiction to the previous property.
			\item Follows from the previous property by letting $a=\rho_{p^n}$ a primitive root of unity of the maximal order which belongs to $K$.
		\end{enumerate} 
		We left to show that $|F|=|K|$. We show by induction that $|L_i|=|K|$ for all $i$, and then the result follows immediately. It is enough to show that $|L_i|=|L_{i+1}|$. By Proposition \ref{main step} $|L_{i+1}|=|L'_i|$. So we only left to show that $|L_i|=|L'_i|$. But $L'_i$ is defined to be an algebraic extension of $L_i$ and thus has the same cardinality as $L_i$.
		
	\end{proof}
	We are ready to prove the theorem.
	\begin{proof}[proof of Theorem \ref{demushkin as galois group}]
		Let $K$ be the field constructed in Proposition \ref{building the field} and define construct $F$ as in Proposition \ref{last step}. By the same proof as \cite[Main theorem]{minavc1991demuvskin}, $G_K$ is a pro-$p$ group. It also immediate to see that $G_F$ is Demushkin, since the cup product bilinear form is nondegenerate. We need to show that $\operatorname{rank}(G_F)=\dim H^1(G)=\mu$. By the Kummer isomorphism , it is equivalent to show that $|F^{\times}/{F^{\times}}^p|=\mu$. On one hand, $|F^{\times}/{F^{\times}}^p|\leq |F|=|K|=\mu$. On the other hand, by the fourth property $K^{\times}/{K^{\times}}^p\hookrightarrow F^{\times}/{F^{\times}}^p$, and we chose $K$ such that $K^{\times}/{K^{\times}}^p$. The only thing left to show is that $q(G_F)=q$. By the choice of $K$, $\Img(\sigma)=1+q\Z_p$ where $\sigma:G_F\to \Z_p^{\times}$ is the homomorphism induced by the action of $G_F$ on $\mu_{p^{\infty}}$. As this homomorphism satisfies property $P$, and by Lemma \ref{property Q} $\operatorname{Aut}(I)$ has property $Q$, $\sigma$ equals to the character of $G_F$. Hence by Proposition \ref{p=image} $q(G_F)=q$.
	\end{proof}
	
	We are moving to deal with the case $q=2$. In \cite{minavc1992pro} the case of pro-$2$ Demushkin group of uncountable rank has been studied. Most of the results can be apllied for the general case.\begin{thm}
		Let $G$ be a pro-2 Demushkin group. If $G$ is a maximal pro-$2$ Galois group, then $t(G)\ne 0$. In addition, for every cardinal $\mu$, there exists an absolute Galois group over a field of characteristic 0 which is isomorphic to a pro-2 Demushkin group of rank $\mu$ for every pair of invariants:
		\begin{itemize}
			\item $t(G)=1, \operatorname{Im}(\chi)= U_2^{(f)}, 2\leq f<\infty$
			\item $t(G)=1, \operatorname{Im}(\chi)= U_2^{[f]}, 2\leq f<\infty$
			\item $t(G)=1, \operatorname{Im}(\chi)=\{\pm 1\} \times U_2^{(f)}, 2\leq f<\infty$
			\item $t(G)=-1, \operatorname{Im}(\chi)=\{\pm 1\}\times  U_2^{(2)}, 2\leq f<\infty$
		\end{itemize}
		and only for such pairs. 
		If $F$ is a field of characteristic $p$, whose absolute Galois group is a pro-2 Demushkin group $G$, then $t(G)=1$ and exactly the following options of $\operatorname{Im}(\chi)$ are possible:
		\begin{itemize}
			\item  If $p\equiv 1 (mod4)$, say $p = 1 + 2^ac, a > 2, 2\nmid c$, then the only possibilities for $\operatorname{Im}(\chi)$
			are the groups $U_2^{(b)}, a \leq b < \infty$.	 Moreover, each group
			$U_2^{(b)}, a \leq b < \infty$, occurs as $\operatorname{Im}(\chi)$ for some Demushkin group of rank $\mu$ $G_F$ with $\operatorname{char}(F)=p$. 
			\item If $p\equiv -1 (mod4)$, say $p = -1 + 2^ac, a > 2, 2\nmid c$, then the only possibilities for $\operatorname{Im}(\chi)$
			are the groups $U_2^{[a]}$, and  $U_2^(b), a+1 \leq b < \infty$.	 Moreover, each ot these groups can occur as $\operatorname{Im}(\chi)$ for some Demuskin group $G_F$ of rank $\mu$ with $\operatorname{char}(F)=p$
		\end{itemize} 
	\end{thm}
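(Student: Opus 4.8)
The plan is to split the statement into three parts: the necessity of $t(G)\ne 0$, the realization of every listed pair over a field of characteristic $0$ together with the fact that no other pair occurs, and the characteristic-$p$ analysis. I would first settle $t(G)\ne 0$ from Serre's description of the cup product. For $p=2$ the primitive root of unity is $\rho=-1$, and one has the classical identity $(a)\cup(a)=(a)\cup(-1)$, coming from the splitting of $\left(\frac{a,-a}{F,\rho}\right)$. Writing $c=(-1)\in H^1(G_F)$ and $\varphi$ for the cup-product form, every single class satisfies $\beta((a))=(a)\cup(a)=(a)\cup(-1)=\varphi((a),c)$, so $\beta(v)=\varphi(v,c)$ and $A=\ker\beta=c^{\perp}$. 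If $c=0$ then $A=V$ and $t(G)=1$; if $c\ne 0$ then, since over $\F_2$ the form is symmetric, $v\in A=c^\perp$ gives $\varphi(c,v)=0$, whence $c\in A^{\perp}\ne 0$ and again $t(G)\ne 0$. This also reads off $t(G)$ arithmetically when $q(G)=2$ (so $-1\notin{F^{\times}}^2$, i.e. $c\ne 0$): one has $t(G)=1$ exactly when $(-1)\cup(-1)=0$, i.e. $-1$ is a sum of two squares, and $t(G)=-1$ exactly when it is not.

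For the realization in characteristic $0$ I would rerun the transfinite construction of Propositions \ref{building the field}, \ref{main step} and \ref{last step} in the pro-$2$ setting, following the pro-$2$ arguments of \cite{minavc1992pro}. The pair $t(G)=1,\operatorname{Im}(\chi)=U_2^{(f)}$ is nothing new: it is the case $q(G)=2^f\ge 4$ already produced by Theorem \ref{demushkin as galois group} with $p=2$ (there $\mu_4\subseteq F$, so $-1$ is a square and the form is alternate, while Proposition \ref{p=image} gives $\operatorname{Im}(\chi)=1+2^f\Z_2=U_2^{(f)}$). For the genuinely new pairs I would start from a base field $K$ of cardinality $\mu$ with $|K^{\times}/{K^{\times}}^2|=\mu$, ${_2\Br(K)}\ne 0$ and a chosen nontrivial $A\in{_2\Br(K)}$, whose content $K\cap\mu_{2^{\infty}}$ and whose Galois group $\operatorname{Gal}(K(\mu_{2^{\infty}})/K)\hookrightarrow\Z_2^{\times}$ realize $q(G)=2$ and the target $\operatorname{Im}(\chi)\in\{U_2^{[f]},\{\pm1\}\times U_2^{(f)}\}$, and in which $-1$ has the behaviour dictated above ($-1$ a sum of two squares for $t=1$, not a sum of two squares for $t=-1$); in the latter case I would moreover take $A=\left(\frac{-1,-1}{K,\rho}\right)$ so that $(-1)\cup(-1)$ stays nonzero. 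The explicit $2$-relators exhibited earlier for these invariants tell one which symbols to target. Running the iteration produces a field $F$ of the same cardinality with $G_F$ pro-$2$ (via the $2$-Sylow fixed-field steps), with ${_2\Br(F)}=\langle A_F\rangle$ and nondegenerate cup product, hence $G_F$ Demushkin of rank $\mu$; the preservation clauses $F^{2}\cap K=K^{2}$ and $F\cap\mu_{2^{\infty}}=K\cap\mu_{2^{\infty}}$ (the analogues of Proposition \ref{last step}(4),(5)), together with $\left(\frac{-1,-1}{F,\rho}\right)=A_F$ when $t=-1$, guarantee that $q(G_F),t(G_F)$ and $\operatorname{Im}(\chi)$ are exactly those fixed in $K$.

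That no further pair occurs, and the characteristic-$p$ statement, I would obtain from the rank-independent structure of the cyclotomic character. Since $\operatorname{Im}(\chi)=\operatorname{Gal}(F(\mu_{2^{\infty}})/F)\le\Z_2^{\times}$, it is one of the three families $U_2^{(f)},U_2^{[f]},\{\pm1\}\times U_2^{(f)}$; combined with the computation above, $t(G)=-1$ forces $-1$ not to be a sum of two squares, hence the presence of complex conjugation $-1\in\operatorname{Im}(\chi)$, i.e. $\operatorname{Im}(\chi)=\{\pm1\}\times U_2^{(f)}$, the only admissible image in that case. These are exactly the Minac--Ware restrictions, and where a finitely generated input is genuinely needed one descends through Theorem \ref{inverse limit}. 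In characteristic $p$ every field contains $\mathbb{F}_p$, in which $-1$ is always a sum of two squares, forcing $t(G)=1$; writing $p=\pm1+2^ac$ with $2\nmid c$ fixes the action of Frobenius on $\mu_{2^{\infty}}$ and hence the admissible images ($U_2^{(b)},b\ge a$ when $p\equiv 1\ (4)$; $U_2^{[a]}$ or $U_2^{(b)},b\ge a+1$ when $p\equiv -1\ (4)$), each realized by the construction above over a characteristic-$p$ base field built from $\mathbb{F}_p$ by transcendental extensions.

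The main obstacle is the simultaneous control, throughout the transfinite iteration, of the level of $F$ (which fixes $t$) and of $\operatorname{Gal}(F(\mu_{2^{\infty}})/F)$ (which fixes $\operatorname{Im}(\chi)$): each generic-splitting-field and transcendental step must be verified not to enlarge $F\cap\mu_{2^{\infty}}$, not to turn $-1$ into a sum of two squares when $t=-1$ is wanted, and not to trivialize $A$. Establishing these preservation properties — the pro-$2$ analogues of Proposition \ref{last step}(4),(5) together with the identity $\left(\frac{-1,-1}{F,\rho}\right)=A_F$ — is the delicate technical core; once they are in place, the remaining steps are formal adaptations of the countable-rank arguments with cardinality bookkeeping.
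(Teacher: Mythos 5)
Your proposal is correct and matches the paper's proof in all essentials: both dispose of the excluded pairs by citing \cite{minavc1992pro}, realize each admissible pair by running the constructions of Propositions \ref{building the field} and \ref{last step} over a base field with the prescribed cyclotomic content (the paper takes the Minac--Ware fields $K'$ directly as that base), and secure $t=-1$ by choosing the distinguished Brauer class to be $A=\left(\frac{-1,-1}{K}\right)$, which the tower is built to keep nonsplit. Your extra touches --- the direct verification that $t(G)\ne 0$ from the identity $(a)\cup(a)=(a)\cup(-1)$, and obtaining the pair $t(G)=1$, $\operatorname{Im}(\chi)=U_2^{(f)}$ from Theorem \ref{demushkin as galois group} --- only make the same argument more self-contained.
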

	\begin{proof}
		The fact that no pro-2 Demushkin group which invariants which have not listed above can occur as a maximal pro-$2$ Galois group was proved in \cite{minavc1992pro}. Let $\mu>\aleph_0$ be a cardinal , we will show that every  option from the above list appear for $\mu$. Let $K'$ be the field constructed for every case in \cite{minavc1992pro} and take $K$ to be the field constructed in Proposition \ref{building the field} over $K'$.  We build the field $F$ as in Proposition \ref{main step}. Then $G_F$ is a pro-$2$ Demushkin group of rank $\mu$. The invariant $\Img(\chi)$ is determined by $F\cap \mu_{2^{\infty}}=K\cap_{2^{\infty}}=K'\cap\mu_{2^{\infty}}$. We left with calculation $t(G_F)$. In \cite{minavc1992pro} it was shown that $t(G_F)=1$ if $\left(\dfrac{-1,-1}{F}\right)=1$ and $t(G)=-1$ otherwise. Hence, if $\left(\dfrac{-1,-1}{K'}\right)=1$, obviously $\left(\dfrac{-1,-1}{F}\right)$. And if $\left(\dfrac{-1,-1}{K'}\right)\ne 1$ then we already shown that $\left(\dfrac{-1,-1}{K}\right)\ne 1$. Take $A=\left(\dfrac{-1,-1}{K}\right)$ we get that $\left(\dfrac{-1,-1}{F}\right)\ne 1$, as required.
	\end{proof}
	In contrary to the countable case, in the uncountable case we still left with much mysterious.
	\begin{question}[Open questions]
		Does every pro-$p$ Demushkin group $G$ of uncountable rank for $p\ne 2$ which satisfies $s(G)=0$ occur as a maximal pro-$p$ Galois group of a field?
		
		Does every pro-$2$ group of uncountable rank with $s(G)=0$ and the pairs of invariants listed above  occur as a maximal pro-$2$ Galois group of a field?
	\end{question}
	We end this section by showing that for $p\ne 2$, pro-$p$ Demushkin groups of arbitrary rank satisfy some properties of maximal pro-$p$ Galois groups.
	\begin{prop}
		Let $p\ne 2$. Every pro-$p$ Demushkin group is Bloch-Kato.
	\end{prop}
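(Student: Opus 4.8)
Recall that a pro-$p$ group $G$ is \emph{Bloch--Kato} if for every closed subgroup $U\le G$ the graded $\F_p$-cohomology algebra $H^\bullet(U)=\bigoplus_{n\ge 0}H^n(U)$ is \emph{quadratic}, i.e.\ generated in degree one with an ideal of relations generated in degree two. The plan is to split the closed subgroups of $G$ into two classes and check quadraticity for each. By Corollary \ref{open subgroups}, a closed subgroup $U\le G$ is either of finite index, hence open and therefore itself a Demushkin group, or of infinite index, hence a free pro-$p$ group. For a free pro-$p$ group $U$ one has $\operatorname{cd}(U)\le 1$, so $H^\bullet(U)=\F_p\oplus H^1(U)$ with all products of positive degree vanishing; this is the quadratic algebra $T(H^1(U))/\big(H^1(U)\otimes H^1(U)\big)$ and requires nothing further. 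Thus the whole statement reduces to proving that \emph{every} Demushkin group (of arbitrary rank, with $p\ne 2$) has quadratic cohomology.

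So I would fix a Demushkin group $D$ and write $V=H^1(D)$. By Corollary \ref{open subgroups} we have $\operatorname{cd}(D)=2$, so $H^n(D)=0$ for $n\ge 3$, while $H^2(D)\cong\F_p$ and the cup product $\varphi\colon V\times V\to H^2(D)$ is nondegenerate; since $p\ne 2$ it is alternating (this is precisely where the hypothesis $p\ne 2$ enters). Put $R=\ker\big(V\otimes V\xrightarrow{\cup}H^2(D)\big)$ and form the quadratic algebra $A=T(V)/(R)$. The cup product defines a surjection of graded algebras $A\twoheadrightarrow H^\bullet(D)$, surjective because $\varphi\ne 0$ forces $H^1(D)\cup H^1(D)=H^2(D)$. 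By construction the maps $A_0\to H^0$, $A_1\to H^1$ and $A_2\to H^2$ are isomorphisms, the last since $A_2=(V\otimes V)/R\cong H^2(D)$. Because $A$ is generated in degree one, $A_n=A_1\cdot A_{n-1}$, so it remains only to show $A_3=0$.

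To obtain $A_3=0$ I would argue locally. Given an elementary tensor $\alpha\otimes\beta\otimes\gamma\in V^{\otimes 3}$, Proposition \ref{locally nondegenerate} produces a finite-dimensional subspace $L\subseteq V$ containing $\alpha,\beta,\gamma$ on which $\varphi$ restricts to a nondegenerate alternating form. Fixing a symplectic basis of $L$, a direct computation in $A(L)=T(L)/(R_L)$, with $R_L=\ker(L\otimes L\to\F_p)$, gives $A(L)_3=0$: the one-dimensional space $A(L)_2$ is the class of any symplectic pair, and multiplying it by any basis vector yields a product containing a non-symplectic pair, which vanishes by skew-symmetry together with $x\cup x=0$. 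Equivalently $L^{\otimes 3}=R_L\otimes L+L\otimes R_L$. Since $\varphi|_L$ is the restriction of $\varphi$, we have $R_L=R\cap(L\otimes L)\subseteq R$, so $\alpha\otimes\beta\otimes\gamma\in R\otimes V+V\otimes R$ and hence vanishes in $A_3$. As such tensors span $V^{\otimes 3}$, we conclude $A_3=0$, whence $A\cong H^\bullet(D)$ is quadratic.

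The one genuinely non-formal step is the last paragraph. For infinite rank there is no global symplectic basis of $V$ -- the content of Theorem \ref{maximal number of forms} is exactly that such forms come in a huge number of isometry classes -- so quadraticity cannot be extracted from a normal form of $\varphi$ itself. The device that resolves this is local nondegeneracy (Proposition \ref{locally nondegenerate}): every finite configuration of degree-one classes already lives inside a finite-dimensional nondegenerate piece, where the classical symplectic computation applies and degree-two relations suffice. This finite-dimensional reduction, in the same spirit as Theorem \ref{inverse limit}, is the step I expect to need the most care, and it is the only obstacle to the argument.
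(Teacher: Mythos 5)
Your proof is correct, but it takes a genuinely different route from the paper's at the key step. Both arguments start the same way: by Corollary \ref{open subgroups} a closed subgroup of $G$ is either open, hence Demushkin, or of infinite index, hence free, and the free case is immediate; so everything reduces to quadraticity of $H^{\bullet}(D)$ for $D$ Demushkin. From here the paper stays group-theoretic: it invokes Theorem \ref{inverse limit} to write $D$ as an inverse limit of finitely generated Demushkin groups with surjective transition maps, observes that the induced maps on $H^{\bullet}$ are injective, and then concludes by citing \cite[Proposition 5.1]{quadrelli2014bloch} together with the known Bloch--Kato property of finitely generated Demushkin groups. You instead work entirely inside the cohomology algebra: you form the candidate quadratic algebra $A=T(V)/(R)$, note the surjection $A\twoheadrightarrow H^{\bullet}(D)$ is an isomorphism in degrees $\le 2$, and kill $A_3$ by pushing each elementary tensor into a finite-dimensional nondegenerate subspace via Proposition \ref{locally nondegenerate}, where the symplectic computation (using that skew-symmetric implies alternating when $p\ne 2$) applies; since $R_L=R\cap(L\otimes L)\subseteq R$, this gives $V^{\otimes 3}=R\otimes V+V\otimes R$ and hence $A_n=0$ for all $n\ge 3$. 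What the paper's route buys is brevity given existing machinery and consistency with the inverse-limit technique used throughout the paper; what yours buys is self-containedness --- no appeal to Theorem \ref{inverse limit}, to Quadrelli, or to the finitely generated case --- and it isolates exactly where nondegeneracy and $p\ne 2$ enter. One simplification: your last step does not actually require the local reduction. Fixing a single pair $x,y\in V$ with $\varphi(x,y)=1$, one has in $A$, for all $a,b,c\in V$,
\[
abc=\varphi(a,b)\,(xy)c=\varphi(a,b)\varphi(y,c)\,(xx)y=0,
\]
using only the degree-two relations $a\otimes b-\varphi(a,b)\,x\otimes y\in R$, $y\otimes c-\varphi(y,c)\,x\otimes y\in R$ and $x\otimes x\in R$; so Proposition \ref{locally nondegenerate} can be bypassed entirely, and your ``most delicate'' step is in fact the easiest one.
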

	\begin{proof}
		A pro-$p$ group $G$ is said to Bloch-Kato, if for every closed subgroup $H\leq G$ $H^{\bullet}(H)$ is a quadretic algebra, meaning that it is generated by elements of the first level, modulo relations of the second level. This property is inspired by the positive solution to the Bloch-Kato Conjecture, states that $H^{\bullet}(G_F(p))\cong K^M_*(F)$ for every field $F$.  By Corollary \ref{open subgroups}, every subgroup of a Demushkin group is either Free or Demushkin. For a free pro-$p$ $F$ group the cohomological dimension is 1 and the claim follows immediately, by letting the relations to be $a\cup b=0$ for all $a,b\in H^1(F)$. We left to show that the cohomology ring of a Demushkin group is quadratic. Let $H$ be a Demushkin group. By Theorem \ref{inverse limit}, Every Demushkin group can be expressed as a projective limit of finitely generated Demushkin groups $G\cong \{G_i,\varphi_{ij}\}$. Since $H^2(G_i)\cong H^2(G)\cong \F_p$ one can choose the groups $G_i$ such that $\operatorname{Inf}:H^2(G_j)\to H^2(G_i)$ are injective for every $j\leq i$. As the maps $\varphi:G_i\to G_j$ are onto, the inflations maps $\operatorname{Inf}:H^1(G_j)\to H^1(G_i)$ are injective for every $j\leq i$. Eventually, since $\operatorname{cd}(G_i)=2$ for every $i$, we get that the maps $H^{\bullet}(G_j)\to H^{\bullet}(G_i)$ induced by the inflations are injective for every $j\leq i$, hence by \cite[Proposition 5.1]{quadrelli2014bloch} we are done.
	\end{proof}
	\begin{prop}
		Let $p\ne 2$. Every pro-$p$ Demushkin group satisfies the $3$-vanishing Massey product property, hereditary.
	\end{prop}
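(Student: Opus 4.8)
The plan is to reduce everything to the two axioms defining a Demushkin group — that $\dim H^2(G)=1$ and that the cup product $H^1(G)\times H^1(G)\to H^2(G)$ is nondegenerate — together with the subgroup dichotomy recorded in Corollary \ref{open subgroups}. Recall that a pro-$p$ group has the $3$-vanishing Massey product property if every triple Massey product $\langle a,b,c\rangle$ that is \emph{defined}, i.e. $a\cup b=0$ and $b\cup c=0$ in $H^2$, contains the zero class; the property is \emph{hereditary} if it holds for every closed subgroup. By Corollary \ref{open subgroups} every closed subgroup of $G$ is either free or again Demushkin, so it suffices to establish the plain $3$-vanishing property separately for free pro-$p$ groups and for Demushkin groups of arbitrary rank. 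The hereditary statement for $G$ then follows immediately, exactly as heredity was handled in the Bloch--Kato proposition just above.

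The free case is immediate: a free pro-$p$ group has $\operatorname{cd}\le 1$, hence $H^2=0$, and any defined Massey product, being a nonempty coset inside $H^2$, is forced to be $\{0\}$. The Demushkin case is where the two axioms do all the work, and I would argue directly rather than passing through the finite-rank approximation of Theorem \ref{inverse limit}. Let $H$ be a Demushkin group and let $\langle a,b,c\rangle$ be defined, so $a\cup b=0=b\cup c$ in $H^2(H)$. The indeterminacy of $\langle a,b,c\rangle$ is the subspace $a\cup H^1(H)+H^1(H)\cup c$ of $H^2(H)$. If $a\ne 0$, nondegeneracy (in the precise form used throughout the paper: some $x$ satisfies $a\cup x\ne 0$) shows $a\cup H^1(H)\ne 0$, and since $\dim H^2(H)=1$ this subspace is already all of $H^2(H)$; the indeterminacy is then the whole group, so the Massey product equals $H^2(H)$ and in particular contains $0$. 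If instead $a=0$, I would choose the zero cochain as a representative of $a$, so that in a defining system the cochain bounding the pair $(a,b)$ may be taken to be $0$ and the value $a\smile a_{23}+a_{12}\smile c$ vanishes identically; again $0\in\langle a,b,c\rangle$. Thus every defined triple Massey product in $H^2(H)$ contains zero.

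I expect the only real care needed to be bookkeeping, not genuine difficulty. One must fix the conventions for the defining system, value, and indeterminacy of a triple Massey product so that both reductions — ``$a\ne 0$ forces full indeterminacy'' and ``$a=0$ forces the zero value'' — are rigorous, and one must invoke nondegeneracy in exactly the form stated, which is available for Demushkin groups of every rank. It is worth noting that the argument uses only $\dim H^2=1$ and nondegeneracy, so the hypothesis $p\ne 2$ is inherited from the surrounding discussion rather than essential to this step; and, in contrast to the Bloch--Kato proposition, no inverse-limit reduction via Theorem \ref{inverse limit} is required, since the defining properties already hold at the level of $G$ itself.
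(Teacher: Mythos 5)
Your proof is correct, and it takes a genuinely different route from the paper's at the key step. The reduction via Corollary \ref{open subgroups} (closed subgroups of a Demushkin group are free or again Demushkin) is common to both arguments, and your treatment of the free case ($H^2=0$) is fine. For the Demushkin case, however, the paper does not argue directly: it invokes Theorem \ref{inverse limit} to write the group as an inverse limit of finitely generated Demushkin groups, quotes \cite[Theorem 4.3]{Minac2017Triple} for those, and finishes with a ``standard inverse limit argument'' (implicitly, a compactness argument on nonempty finite sets of lifts, of the kind written out later in the paper in Proposition \ref{locally free}). You instead derive the vanishing from the two defining axioms alone: when $a\neq 0$, nondegeneracy together with $\dim H^2(H)=1$ forces $a\cup H^1(H)=H^2(H)$, so the defined triple Massey product, being a nonempty coset of its indeterminacy $a\cup H^1(H)+H^1(H)\cup c$, is all of $H^2(H)$ and in particular contains $0$; when $a=0$, the zero cocycle is the unique representative of $a$ (in degree one with trivial coefficients there are no nonzero coboundaries), so the defining system with $a_{12}=0$ has value exactly $0$. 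This buys several things: the proof is self-contained, avoiding both the external citation and the inverse-limit bookkeeping; it visibly uses nothing beyond $\dim H^2\le 1$ and nondegeneracy, so it applies to any pro-$p$ group with these properties; and, as you note, it never uses $p\neq 2$, so it proves the statement for all primes. The one step you should make explicit is the translation between your cochain-level formulation (``every defined $\langle a,b,c\rangle$ contains $0$'') and the paper's definition via unipotent upper-triangular representations: by Dwyer's correspondence, continuous homomorphisms from $G$ to the quotient of the full unipotent group by its centre with prescribed superdiagonal $(a,b,c)$ are exactly the defining systems for $\langle a,b,c\rangle$, and a lift to the full unipotent group with matching superdiagonal is exactly a defining system with value zero. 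This equivalence is standard, also for continuous cochains, but it is what identifies your statement with the paper's; with that sentence added, your argument is complete.
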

	\begin{proof}
		A pro-$p$ group is said to have the $n$-vanishing Massey product property if every homomorphism as follows $\varphi:G\to \overline{U_n(\F_p)}$ admits a homomorphism $\psi:\varphi:G\to \overline{U_n(\F_p)}$ such that $[\varphi(g)]_{i,i+1}=[\alpha(\psi(g))]_{i,i+1}$ for every $g\in G$ and $i$.
		\[
		\xymatrix@R=14pt{ & & &G \ar@{->>}[dd]^(0.3){\varphi}& \\
			&&&&\\
			1 \ar[r] & \mathbb{F}_p \ar[r] & {\begin{bsmallmatrix}
					1&\rho_{1,2}&\chi_{1,3}&...&\chi_{1,n}\\
					&1&\rho_{2,3}&...&\chi_{2,n}\\
					&&\ddots&\ddots&\vdots \\
					&&&1&\chi_{n-1,n}\\
					&&&&1\\
			\end{bsmallmatrix}} \ar[r]^{\alpha}&{\begin{bsmallmatrix}
					1&\rho_{1,2}&\rho_{1,3}&...&\\
					&1&\rho_{2,3}&...&\rho_{2,n}\\
					&&\ddots&\ddots&\vdots \\
					&&&1&\rho_{n-1,n}\\
					&&&&1\\
			\end{bsmallmatrix}}\ar[r]&1\\
		}
		\]
		
		Since every subgroup of a Demushkin group is either free or Demushkin, it is enough to prove the property holds for every Demushkin group. This follows from a standard inverse limit argument, since by Theorem \ref{inverse limit} every pro-$p$ Demushkin group can be expressed as the inverse limit of finitely generated Demushkin groups, which all satisfy the 3-vanishing Massey product property (see \cite[Theorem 4.3]{Minac2017Triple}). In fact, in \cite[Theorem 4.3]{Minac2017Triple} it was proven that every Demushkin group satisfies the $n$-vanising Massey product property for all $n\geq 3$, so by the same proof we can  prove that Demushkin groups of arbitrary rank satisfy the  $n$-vanising Massey product property. As we deal with properties of maximal pro-$p$ Galois group, we state that it is conjectured that every maximal pro-$p$ Galois group satisfies the   $n$-vanising Massey product property for every $n\geq 3$ (see \cite{Minac2016Triple}). However, it has only been proven for $n=3$ (\cite{Efrat2017Triple},\cite{matzri2014triple}). 
	\end{proof}
	\begin{prop}
		Every pro-$p$ Demushkin group is hereditary of $p$-absolute Galois type.
	\end{prop}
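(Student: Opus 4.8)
The plan is to follow the template used for the preceding two propositions, reducing to the finitely generated case via Corollary \ref{open subgroups} and Theorem \ref{inverse limit}. Recall that a pro-$p$ group $G$ is of $p$-absolute Galois type if, for every $\chi\in H^1(G)$, writing $N_\chi=\ker\chi$ for the associated closed subgroup of index dividing $p$, a certain natural sequence relating $H^{\bullet}(G)$ and $H^{\bullet}(N_\chi)$ through restriction and cup product with $\chi$ is exact; the property is \emph{hereditary} of $p$-absolute Galois type if $G$ together with all of its closed subgroups enjoys it.

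First I would dispose of the hereditary bookkeeping. By Corollary \ref{open subgroups} every closed subgroup of a Demushkin group is either free or Demushkin, and the classes of free and of Demushkin pro-$p$ groups are each closed under passing to closed subgroups. Hence it suffices to verify the (non-hereditary) $p$-absolute Galois type property separately for free pro-$p$ groups and for Demushkin groups of arbitrary rank: once both are known, every closed subgroup of a given Demushkin $G$ lands in one of these two classes and so is of $p$-absolute Galois type. For a free pro-$p$ group $F$ one has $\operatorname{cd}(F)\le 1$, and every $N_\chi$ is again free of cohomological dimension at most $1$; the defining sequences then involve only vanishing higher cohomology groups and are trivially exact.

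It remains to treat a Demushkin group $G$ of arbitrary rank. The key input is that every \emph{finitely generated} Demushkin group is of $p$-absolute Galois type: such groups are maximal pro-$p$ Galois groups (of finite extensions of $\Q_p$) by Serre's theorem, and maximal pro-$p$ Galois groups are known to be of $p$-absolute Galois type. I would then use Theorem \ref{inverse limit} to write $G={\invlim}_i G_i$ with each $G_i$ finitely generated Demushkin, arranging, exactly as in the proof that Demushkin groups are Bloch--Kato, that the inflation maps $\operatorname{Inf}:H^k(G_i)\to H^k(G)$ are injective (possible since $\dim H^2=1$ and $\operatorname{cd}(G_i)=2$). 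Given $0\neq\chi\in H^1(G)$, it is inflated from some $\chi_i\in H^1(G_i)$, and $N_\chi={\invlim}_i N_{\chi_i}$, where $N_{\chi_i}=\ker\chi_i\le G_i$ is again a finitely generated Demushkin (or free) group of index $p$. Since the continuous cohomology of a profinite group commutes with the relevant direct limits, $H^{\bullet}(G)={\dirlim}_i H^{\bullet}(G_i)$ and $H^{\bullet}(N_\chi)={\dirlim}_i H^{\bullet}(N_{\chi_i})$, compatibly with restriction and with cup product by $\chi$; as a filtered colimit of exact sequences is exact, the exactness of the defining sequence for each pair $(G_i,\chi_i)$ passes to $(G,\chi)$.

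The hard part will be this last compatibility step: one must check that the defining sequence is genuinely functorial along the inverse system, so that the subgroups $N_{\chi_i}$ assemble correctly and the cup-product, restriction, and corestriction maps occurring in the sequence for $G$ really are the direct limits of their counterparts for the $G_i$. Concretely, the obstacle is to promote the transition maps of the system $\{(G_i,N_{\chi_i})\}$ to a directed system of the full defining diagrams whose colimit is the diagram for $(G,N_\chi)$; once this functoriality is established, exactness of filtered colimits completes the argument, and combining it with the free and finitely generated cases yields that $G$ is hereditary of $p$-absolute Galois type.
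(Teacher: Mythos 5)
Your reduction of the hereditary part (via Corollary \ref{open subgroups}) is fine, but the core of your argument has a fatal gap: the claim that every finitely generated Demushkin group is a maximal pro-$p$ Galois group ``by Serre's theorem'' is false as stated. Serre's theorem goes in the opposite direction: the maximal pro-$p$ Galois groups of finite extensions of $\Q_p$ containing a primitive $p$-th root of unity are Demushkin. Whether, conversely, every finitely generated Demushkin group can be realized as a maximal pro-$p$ Galois group of a field is explicitly recorded in the introduction of this very paper as an open question, and the groups actually realized by $p$-adic fields have constrained rank and invariants. So the base case of your inverse-limit argument is unsupported, and the whole reduction collapses. There is a second, smaller gap in your free case: the defining sequence is $H^1(\ker\chi)\xrightarrow{\operatorname{Cor}} H^1(G)\xrightarrow{\chi\cup} H^2(G)\xrightarrow{\operatorname{Res}} H^2(\ker\chi)$, and for a free pro-$p$ group $G$, where $H^2(G)=0$, exactness at $H^1(G)$ asserts that the corestriction $H^1(\ker\chi)\to H^1(G)$ is \emph{surjective}; that is not a formal consequence of vanishing higher cohomology, so the sequence is not ``trivially exact'' there.

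The paper avoids both problems. First, it invokes \cite{lam2023generalized}, by which it suffices to verify exactness at $H^2(G,\F_p)$ alone; this makes the free case genuinely trivial (since $H^2=0$) and eliminates the corestriction issue entirely. Second, for a Demushkin group $H$ of \emph{any} rank no reduction to the finitely generated case is needed: if $\alpha=0$ the statement is immediate, and if $\alpha\neq 0$ then nondegeneracy of the cup product forces $\alpha\cup(-):H^1(H)\to H^2(H)\cong\F_p$ to be onto, while $\operatorname{Res}_{\ker\alpha}(\alpha\cup\beta)=\operatorname{Res}_{\ker\alpha}(\alpha)\cup\operatorname{Res}_{\ker\alpha}(\beta)=0$ for every $\beta\in H^1(H)$, so the kernel of restriction and the image of $\alpha\cup(-)$ are both all of $H^2(H)$, giving exactness at $H^2$. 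Note that to salvage your approach you would have to prove the finitely generated case by exactly this cup-product argument anyway, at which point the inverse-limit machinery (and the delicate functoriality of the sequences along the system, which you correctly flag as the hard step) is superfluous.
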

	\begin{proof}
		A pro-$p$ group $G$ is said to be of $p$-absolute Galois type if for every $\alpha\in H^1(G)$, the following sequence is exact:
		\[	
		\begin{tikzcd}
			H^1(\ker(\chi),\mathbb{F}_p) \arrow[r,"\operatorname{Cor}_G"]& H^1(G,\mathbb{F}_p) \arrow[r,"\chi \cup"]& H^2(G,\mathbb{F}_p)  \arrow[r,"\operatorname{Res}_{\ker(\chi)}"]&[1.2em]  H^2(\ker(\chi),\mathbb{F}_p) 
		\end{tikzcd}
		\]
		By \cite{lam2023generalized} it is enough to prove that the sequence is exact at $H^2(G,F)$ for every $\alpha\in H^1(G)$. Again, since every subgroup of a Demushkin group is either free or Demushkin, it is enough to prove the property holds for every Demushkin group. Let $H$ be a Demushkin group and let $\alpha\in H^1(H)$. If $\alpha=0$ then $H=\ker(\alpha)$, and the map: $\alpha\cup (-):H^1(H)\to H^2(H)$ is the zero map, so we are done. Otherwise, since the cup product is nondegenerate, the map $ \alpha\cup (-):H^1(H)\to H^2(H)$ is onto, while obviously $\operatorname{Res}_ {\ker{\alpha}}(\alpha\cup \beta)=0$ for all $\beta\in H^1(G)$.
		
	\end{proof}
	\section*{Profinite completions of Demushkin groups}
	In this section we compute the profinite completion of a Demushkin group of infinite rank, and get a new class of examples of absolute Galois groups having absolute Galois completions. Recall that the profinite completion of an abstract group $G$, denoted by $\hat{G}$, is defined as the inverse limit ${\invlim}_{U\unlhd_fG}G/U$ where $U$ runs over the finite index normal subgroups of $G$, equipped with the natural homomorphism $i:G\to \hat{G}$  and it satisfies the following universal property: every homomorphism $f:G\to H$ into a profinite group can be lifted uniquely to a continuous homomorphism $\hat{f}:\hat{G}\to H$. Let $G$ be a profinite group. Considered as an abstract group, $G$ has a profinite completion with an injection $i: G \to \hat G$. We say that $G$ is strongly complete when $i$ is an isomorphism. For pro-$p$ groups, we have the following equivalence:
	\begin{prop}
		Let $G$ be a pro-$p$ group. The following are equivalent:
		\begin{enumerate}
			\item $G$ is finitely generated.
			\item $G$ is strongly complete.
		\end{enumerate}
	\end{prop}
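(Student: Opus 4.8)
The plan is to recast strong completeness in the standard way: a profinite group $G$ is strongly complete if and only if every subgroup of finite index is open, equivalently if and only if the abstract finite quotients of $G$ coincide with its continuous finite quotients. Granting this reformulation (which I would justify by noting that $\hat G=\invlim_{N}G/N$ runs over finite-index normal $N$, and equals $G$ precisely when every such $N$ is open), the proposition reduces to the claim that a pro-$p$ group is finitely generated if and only if each of its finite-index subgroups is open. I would then prove the two implications separately.

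For $(1)\Rightarrow(2)$ I would invoke Serre's theorem that in a finitely generated pro-$p$ group every finite-index subgroup is open; more generally this is the Nikolov--Segal theorem for finitely generated profinite groups, but in the pro-$p$ setting the classical and elementary argument suffices. Its engine is the Frattini subgroup: since $G$ is finitely generated, $\Phi(G)=\overline{G^p[G,G]}$ is open and itself finitely generated, so by iteration every $\Phi^m(G)$ is open. Given a subgroup $H$ of finite index, its normal core $N$ has index a power of $p$, say $p^m$, and $\Phi^m$ of a $p$-group of order $p^m$ is trivial; by functoriality of $\Phi$ under the surjection $G\to G/N$ we get $\Phi^m(G)\subseteq N\subseteq H$, whence $H$ is open.

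For $(2)\Rightarrow(1)$ I would argue by contraposition, producing a finite-index non-open subgroup when $G$ is not finitely generated. Here $V:=G/\Phi(G)$ is an infinite-dimensional elementary abelian pro-$p$ group, isomorphic as a topological group to the full product $\prod_{i\in I}\F_p$ with $|I|=d(G)$ infinite, and its continuous $\F_p$-characters are exactly the direct sum $\bigoplus_{i\in I}\F_p$, whose kernels are the open hyperplanes. The subspace $S=\bigoplus_{i\in I}\F_p$ sitting inside $V$ is proper and dense, so I would choose (by extending a basis of $S$ to a basis of $V$) a nonzero abstract linear functional $\lambda\colon V\to\F_p$ vanishing on $S$. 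Such a $\lambda$ cannot be continuous, since a continuous character vanishing on the dense set $S$ would vanish identically; hence $\ker\lambda$ has index $p$ but is not closed. Its preimage $H=\pi^{-1}(\ker\lambda)$ under the quotient map $\pi\colon G\to V$ then has index $p$ and contains $\Phi(G)$, but is not open: if it were, $H/\Phi(G)=\ker\lambda$ would be open in $V$, a contradiction.

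The routine verifications — that $\Phi^m(G)$ is open for finitely generated $G$, that quotient maps of profinite groups are open, and the equivalence of strong completeness with openness of all finite-index subgroups — I would relegate to citations of standard profinite-group theory. The one non-elementary input is Serre's theorem in $(1)\Rightarrow(2)$, which I expect to be the main obstacle if a self-contained treatment were demanded; the key conceptual step in $(2)\Rightarrow(1)$ is the existence of a discontinuous $\F_p$-functional on $\prod_{i\in I}\F_p$, resting on the fact that $S$ is a \emph{proper dense} subspace so that any functional annihilating it is automatically discontinuous and yields the desired non-open hyperplane.
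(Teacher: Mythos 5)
The paper offers no proof of this proposition at all --- it is stated as a known fact at the start of the profinite-completion section --- so there is no argument of the authors' to compare yours against; your proposal must stand on its own, and in outline it is the standard and correct one. The implication $(1)\Rightarrow(2)$ is Serre's theorem plus the standard equivalence of strong completeness with openness of all finite-index subgroups, and your proof of $(2)\Rightarrow(1)$ by contraposition is complete and correct: if $G$ is not finitely generated then $G/\Phi(G)\cong\prod_{i\in I}\F_p$ with $I$ infinite, the subspace $\bigoplus_{i\in I}\F_p$ is proper and dense, a nonzero functional $\lambda$ annihilating it cannot be continuous, so $\ker\lambda$ is a finite-index subgroup that is not closed (hence not open), and its preimage under the open quotient map $G\to G/\Phi(G)$ is a non-open subgroup of index $p$.

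Two caveats, one of which is a genuine flaw in what you wrote. First, your sketched ``engine'' of Serre's theorem is circular. The step ``by functoriality of $\Phi$ under the surjection $G\to G/N$ we get $\Phi^m(G)\subseteq N$'' transports the \emph{topological} Frattini subgroup $\Phi(G)=\overline{G^p[G,G]}$ through the abstract quotient map $G\to G/N$, which is not known to be continuous --- indeed its continuity is equivalent to the openness of $N$, which is exactly what is being proved. What comes for free from group theory is only that the \emph{abstract} subgroup $G^p[G,G]$ lands in $\Phi(G/N)$; to replace it by its closure one needs the actual content of Serre's theorem, namely that $G^p[G,G]$ is already closed when $G$ is finitely generated (every element of $\Phi(G)$ is a product of boundedly many $p$-th powers and commutators). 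For a non-finitely-generated pro-$p$ group this closedness genuinely fails, which is why your own $(2)\Rightarrow(1)$ construction works; so the sketch cannot be repaired without that lemma. Since you invoke Serre's theorem by name, the implication stands as a citation, but the sketch should not be presented as its proof. Second, the direction of the reformulation you actually use --- that an isomorphism $i\colon G\to\hat G$ (the paper's definition of strong completeness) forces every finite-index subgroup to be open --- is less immediate than your parenthetical suggests: since $i$ is not known to be continuous a priori, one needs a short compactness/inverse-limit argument (or a citation to Ribes--Zalesskii) rather than the bare observation that $\hat G=\invlim G/N$; the converse direction is the easy one.
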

	In \cite{baron-cohomological} the author presented the question: Can the profinite completion of an absolute Galois group also be realized as an absolute Galois group? The purpose of this section is to give a new class of examples with a positive answer. First we need to discuss free pro-$p$ groups. A well know fact (see, for example \cite[Example 3.3.8 (e) for infinite rank]{ribes2000profinite}) states that every free profinite group can be realized as an absolute Galois group. Since every subgroup of a free profintie group is projective, and for pro-$p$ group projectivity equals freeness, we get that the $p$-Sylow subgroups of free profintie groups are free pro-$p$ groups. It also can be proven the $p$-Sylow subgroup of a free profintie group has the same rank of the whole group. As a result for every cardinal $\mu$, the free pro-$p$ group of rank $\mu$ can be realized as an absolute Galois group. However, we give here a direct prove of this result, inspired by Minac\& Ware proof:
	\begin{prop}\label{free as absolute}
		Let $\mu$ be an infinite cardinal, and $G$ be the free pro-$p$ group of rank $\mu$. Then $G$ occurs as an absolute Galois group of some field.
	\end{prop}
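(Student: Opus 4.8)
The plan is to produce a field $F$ whose absolute Galois group $G_F$ is a pro-$p$ group of cohomological dimension at most $1$ and of rank $\mu$; since a pro-$p$ group $H$ satisfies $\operatorname{cd}(H)\le 1$ if and only if $H^2(H,\F_p)=0$, and any such $H$ is then the free pro-$p$ group of rank $\dim H^1(H,\F_p)$, this identifies $G_F$ with $G$. So it suffices to arrange that (i) $G_F$ is pro-$p$, (ii) ${}_p\Br(F)=0$, and (iii) $|F^{\times}/{F^{\times}}^{p}|=\mu$. Granting these, the Kummer isomorphism gives $\dim H^1(G_F,\F_p)=\dim F^{\times}/{F^{\times}}^{p}=\mu$, while the arithmetic description of $H^2$ gives $H^2(G_F,\F_p)\cong {}_p\Br(F)=0$; as $G_F$ is pro-$p$ this forces $\operatorname{cd}(G_F)\le 1$, so $G_F$ is the free pro-$p$ group of rank $\mu$, which is unique up to isomorphism.

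For the construction I reuse the apparatus of Propositions \ref{building the field} and \ref{last step}. Taking $q=0$ in Proposition \ref{building the field} yields a field $K$ with $\mu_{p^{\infty}}\subseteq K$ and $|K^{\times}/{K^{\times}}^{p}|=\mu$, the $p$-independence of the adjoined transcendentals being exactly the computation performed there. The only conceptual change relative to the Demushkin case is the target for the Brauer group: instead of preserving a single surviving class with nondegenerate cup product, I now want to annihilate all of ${}_p\Br$, which is what drives $H^2$ to zero and $\operatorname{cd}$ down to $1$.

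The core is an $\omega$-indexed tower $K=L_0\subseteq L_1\subseteq\cdots$ modelled on Proposition \ref{last step}. I first pass to the fixed field $L_0'$ of a $p$-Sylow subgroup of $G_K$, so that $G_{L_0'}$ is pro-$p$; thereafter every further extension $L_{i}\subseteq L_{i+1}$ keeps $G_{L_{i+1}}$ a closed subgroup of a pro-$p$ group, hence pro-$p$, and by injectivity of restriction to a $p$-Sylow subgroup \cite[I-11]{serre1979galois} no cohomology is lost in passing to $L_0'$. At each successor stage I adjoin generic splitting fields of the $p$-torsion Brauer classes over $L_i$, using these splitting fields exactly as in Lemma \ref{step 2}, so as to kill them, and at limit stages I take unions as in Proposition \ref{building the field}. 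Setting $F=\bigcup_i L_i$, every class in ${}_p\Br(F)$ is already defined, hence already split, over some $L_i$, so ${}_p\Br(F)=0$; and $G_F$ is pro-$p$ by the same argument as in \cite[Main Theorem]{minavc1991demuvskin}.

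The delicate point, and the step I expect to cost the most, is to guarantee that annihilating ${}_p\Br$ does not collapse the group of $p$-th power classes below $\mu$. As in Proposition \ref{last step}(4)--(5) I must preserve $F^{p}\cap K=K^{p}$, so that $K^{\times}/{K^{\times}}^{p}\hookrightarrow F^{\times}/{F^{\times}}^{p}$ keeps the rank at least $\mu$, and also $F\cap\mu_{p^{\infty}}=K\cap\mu_{p^{\infty}}$ so that the roots of unity needed for Kummer theory survive. The preservation of non-$p$-th-powers under generic splitting established in Lemma \ref{step 3} supplies this at successor stages, and the union arguments of Proposition \ref{building the field} transport it across limit stages. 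Finally, exactly as in Proposition \ref{last step}, each $L_i$ together with its $p$-Sylow fixed field has cardinality $|K|=\mu$, whence $|F|=\mu$ and $|F^{\times}/{F^{\times}}^{p}|\le\mu$; combined with the lower bound this gives equality and completes the verification of (i)--(iii).
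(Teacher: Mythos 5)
Your proposal follows the paper's own proof very closely in outline: the same reduction to the three conditions ($G_F$ pro-$p$, ${}_p\Br(F)=0$, $|F^{\times}/{F^{\times}}^{p}|=\mu$), the same base field $K$ from Proposition \ref{building the field}, the same use of generic splitting fields to annihilate ${}_p\Br$, and the same appeal to Lemma \ref{step 3} plus union arguments to preserve $p$-th power classes, roots of unity, and cardinality. But there is a genuine gap in how you secure the pro-$p$ condition. You pass to the fixed field $L_0'$ of a $p$-Sylow subgroup once, at the bottom of the tower, and then assert that each subsequent $G_{L_{i+1}}$ is pro-$p$ because it is ``a closed subgroup of a pro-$p$ group.'' That assertion is false: a generic splitting field is the function field of a Severi--Brauer variety, so $L_{i+1}/L_i$ is a \emph{transcendental} extension, and $G_{L_{i+1}}$ is not a closed subgroup of $G_{L_i}$ in any sense. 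Pro-$p$-ness is in fact destroyed at every successor stage: $L_{i+1}$ contains rational function fields $L_i(x)$, which admit extensions such as $L_i(\sqrt[\ell]{x})$ of degree $\ell$ for every prime $\ell\neq p$, so nothing in your construction prevents $F=\bigcup_i L_i$ from having prime-to-$p$ extensions. (A smaller slip: the splitting step should simply form the generic splitting field of the class to be killed, as in the paper's proof; Lemma \ref{step 2} is the tool for forcing $A_L\in\langle B_L\rangle$, not for killing classes.)

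Because of this, your closing appeal to ``the same argument as in \cite[Main Theorem]{minavc1991demuvskin}'' is not available. That argument proves $G_F$ is pro-$p$ as follows: any finite extension $E=F(\theta)$ has minimal polynomial with coefficients in some member of the tower whose absolute Galois group is \emph{already known} to be pro-$p$; the polynomial is irreducible over that member (being irreducible over the larger field $F$), so its degree, hence $[E:F]$, is a power of $p$. This requires fields with pro-$p$ absolute Galois group to appear cofinally in the tower, not just once at the bottom. The repair is exactly what the paper does by invoking Proposition \ref{last step}: interleave the Sylow passage at every stage, i.e.\ build $L_{i+1}$ as the Brauer-killing extension of $L_i'$ (the fixed field of a $p$-Sylow subgroup of $G_{L_i}$) rather than of $L_i$. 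Then $F=\bigcup_i L_i=\bigcup_i L_i'$, the Min\'a\v{c}--Ware degree argument applies, and the Sylow passages are harmless for the Brauer-killing steps since restriction to the Sylow fixed field is injective on ${}_p\Br$ by \cite[I-11]{serre1979galois} --- a fact you already cite. With that modification the rest of your argument (killing ${}_p\Br$ in the union, preservation of $p$-th power classes and roots of unity, and the cardinality count giving rank exactly $\mu$) goes through as written.
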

	\begin{proof}
		Recall that a pro-$p$ group $G$ is free if and only if $\operatorname{cd}(G)\leq 1$, which is equivakent to $H^2(G)=0$ (see, for example \cite[Chapter 7]{ribes2000profinite}). Hence, our objective is constructing a field $F$ with $|F^{\times}/{F^{\times}}^p|=\mu$, $_p\operatorname{Br}(F)=0$ and such that $G_F$ is a pro-$p$ group. We present a similar construction of Theorem \ref{demushkin as galois group}. Let $K$ be a field such that $|K|=|K^{\times}/{K^{\times}}^p|=\mu$. An example of such a field can taken from Proposition \ref{building the field} for the required $\mu$. As every algebra over $F$ can be determined by $n^3$ constants, we can order $_p\operatorname{Br}(F)$ over some ordinal $\lambda\leq \mu$. We build a field $L_1$ a s follows: let $\alpha<\lambda$ and assume that for every $\beta<\alpha$ we already defined a field $M_{\beta}$, such that for every $\gamma\leq \beta$, ${A_{\gamma}}_{M_{\gamma}}=1$, and $M_{\beta}^p\cap K=K^p$. If $\alpha=\beta+1$ then let $M_{\alpha}$ be the field constructed from $M_{\beta}$ by forming the generic splitting field of ${A_{\alpha}}_{M_{\beta}}$. Otherwise $M_{\alpha}$ will be the generic splitting field forming over $\bigcup_{\beta<\alpha} M_{\beta}$ for the algebra ${A_{\beta}}_{\bigcup_{\alpha<\beta} M_{\alpha}}$. One easily observes that $|L_1|=|K|$, and using Lemma \ref{step 3} and the fact that this property is preserved by direct limit, that $L_1^p\cap K=K^p$. 
		
		Define a series $L_1\subseteq L_2\subseteq...$ as in Proposition \ref{last step} and take $F=\bigcup L_i$. The Theorem follows.
	\end{proof}
	In fact there is a more general and stronger result which can be found in \cite{fried2006field}, which states that profinite projective groups- which are the profinite groups all whose $p$-Sylow subgroups are free- are precisely the absolute Galois groups of pseudo algebraically closed fields. However, we don't need it here.
	
	Before computing the profinite completion of a Demushkin group, we need one more definition.
	\begin{defn}
		A pro-$p$ group $G$ is called \textit{locally free} if every finitely generated closed subgroup of $G$ is free.
	\end{defn}
	\begin{prop}\label{locally free}
		Let $G$ be a pro-$p$ group. Then $G$ is locally free if and only if $\hat{G}$ is free.
	\end{prop}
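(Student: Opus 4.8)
The plan is to begin by observing that $\hat G$ is itself a pro-$p$ group, so that here ``free'' should be read as ``free pro-$p$'', i.e.\ as $H^2(\hat G,\F_p)=0$. To justify this, recall that for $g$ in a pro-$p$ group the procyclic closure $\overline{\langle g\rangle}$ is a quotient of $\Z_p$, on which multiplication by any prime $\ell\ne p$ is invertible; hence every element of $G$ admits an $\ell^n$-th root for all $n$. Consequently, in any finite quotient $Q$ of the abstract group $G$, every element is an $\ell^n$-th power; were $\ell\mid|Q|$, Cauchy's theorem together with this would force $Q$ to contain elements of order $\ell^{n+1}$ for every $n$, which is absurd. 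Thus every finite quotient of $G$ is a $p$-group and $\hat G=\varprojlim_{N\trianglelefteq_f G}G/N$ is pro-$p$.

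For the implication $\hat G$ free $\Rightarrow$ $G$ locally free, I would take a finitely generated closed subgroup $H\le G$. Being finitely generated pro-$p$, $H$ is strongly complete, so $\hat H\cong H$, and the inclusion induces a continuous homomorphism $\hat\iota\colon H\cong\hat H\to\hat G$. It is injective: for $1\ne h\in H$ pick an open normal $U\trianglelefteq G$ with $h\notin U$ (residual finiteness of $G$); then the projection $\hat G\to G/U$ sends $\hat\iota(h)$ to $hU\ne 1$. As $H$ is compact and $\hat\iota$ continuous, the image is a closed subgroup of the free pro-$p$ group $\hat G$, hence free pro-$p$; therefore $H$ is free and $G$ is locally free.

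For the converse the goal is $H^2(\hat G,\F_p)=0$. Writing $H^2(\hat G,\F_p)=\varinjlim_{N}H^2(G/N,\F_p)$ with inflation maps, a class is represented by a central extension $1\to\F_p\to E\to Q\to 1$ of a finite quotient $Q=G/N$, and it dies in the colimit exactly when the quotient map $\phi\colon G\to Q$ lifts to an abstract homomorphism $\Psi\colon G\to E$: indeed, then $\ker\Psi\trianglelefteq_f G$ refines $N$ and the inflation of the class to $G/\ker\Psi$ splits. So it suffices to produce such a lift for every $E$, and the obstruction is the class $\beta=\phi^{\ast}\alpha\in H^2(G,\F_p)$ of the pulled-back extension, computed in \emph{discrete} group cohomology. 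Now for a finitely generated abstract subgroup $A=\langle g_1,\dots,g_n\rangle\le G$, its closure $\overline A$ is a finitely generated closed subgroup, hence free pro-$p$ by hypothesis; since $\overline A$ is finitely generated its finite-index subgroups are open, so $\phi|_{\overline A}$ is continuous, and projectivity of $\overline A$ yields a continuous, in particular abstract, lift $\overline A\to E$. Restricting it to $A$ shows $\beta|_A=0$.

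The main obstacle is the passage from vanishing on every finitely generated subgroup to global vanishing, that is, reconciling the possibly discontinuous abstract finite quotients with the pro-$p$ structure; this is precisely what makes a soft retraction argument fail. I expect to resolve it by working over the field $\F_p$: universal coefficients give $H^n(G,\F_p)\cong\operatorname{Hom}_{\F_p}(H_n(G,\F_p),\F_p)$, homology commutes with the filtered colimit $G=\varinjlim_A A$ over finitely generated abstract subgroups, and $\operatorname{Hom}_{\F_p}(-,\F_p)$ turns that colimit into an inverse limit, so the restriction map $H^2(G,\F_p)\to\varprojlim_A H^2(A,\F_p)$ is injective (no $\varprojlim^{1}$ term appears, since we are over a field). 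As $\beta|_A=0$ for every $A$, this forces $\beta=0$, the lift $\Psi$ exists, and hence $H^2(\hat G,\F_p)=0$; since $\hat G$ is pro-$p$, this means $\hat G$ is free, completing the equivalence.
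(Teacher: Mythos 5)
Your proof is correct, and while your first direction coincides with the paper's, your converse takes a genuinely different route. For ($\hat G$ free $\Rightarrow$ $G$ locally free) both you and the paper use strong completeness of the finitely generated pro-$p$ group $H$ to identify $H$ with $\hat H$, embed it as a closed subgroup of $\hat G$, and invoke the pro-$p$ Nielsen--Schreier theorem; your residual-finiteness check of injectivity merely replaces the paper's remark that the subspace topology on $H$ induces the right finite-index subgroups. For the converse, the paper works with the embedding-problem characterization of free pro-$p$ groups \cite[Theorem 7.7.4]{ribes2000profinite}: it restricts a finite embedding problem for $\hat G$ to the subgroups $H_Y=\overline{\langle X\cup Y\rangle}$, observes that each set $\mathcal{A}_Y$ of continuous weak solutions is nonempty (freeness of $H_Y$) and finite (topological finite generation), and glues a global solution by the compactness principle that an inverse limit of nonempty finite sets is nonempty. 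You instead use the cohomological characterization $H^2(\hat G,\F_p)=0$: you expand $H^2(\hat G,\F_p)=\varinjlim_N H^2(G/N,\F_p)$, convert vanishing of a class into an abstract lifting problem against a central extension $1\to\F_p\to E\to Q\to 1$ of finite $p$-groups, solve it on closures of finitely generated subgroups via Serre's theorem (continuity of $\phi|_{\overline A}$) plus projectivity of free pro-$p$ groups, and glue by duality, since universal coefficients over $\F_p$ and the fact that homology commutes with filtered colimits make the restriction map $H^2(G,\F_p)\to\varprojlim_A H^2(A,\F_p)$ injective. (Your parenthetical about $\varprojlim^1$ would be better phrased as: $\operatorname{Hom}_{\F_p}(-,\F_p)$ turns colimits into limits on the nose, and over a field the universal coefficient sequence has no Ext term.) The trade-off is clear: the paper's argument stays entirely inside the profinite category and needs no discrete group cohomology, whereas yours imports standard facts about abstract (co)homology but dispenses with the finiteness-of-solution-sets bookkeeping and, as a genuine bonus, proves rather than tacitly assumes that $\hat G$ is again pro-$p$ --- a fact needed before either characterization of freeness can be applied to $\hat G$ at all.
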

	\begin{proof}
		Assume first that $\hat{G}$ is free. Let $H\leq G$ be a finitely generated closed subgroup. Since $H$ is finitely generated. it is strongly complete. Hence, every finite-index subgroup of $H$ is open. However, by the basic properties of profinite groups, every open subgroup of $H$ contains the intersection of $H$ with an open subgroup of $G$. Thus, the inclusion $i:H\to G$ induces a monomorphism $\hat{H}\to \hat{G}$. Recalling again that $H$ is strongly complete, we get that $H$ is isomorphic to a closed subgroup of a free pro-$p$ group, and hence $H$ is free by the results in \cite[Chapter 7]{ribes2000profinite}.
		
		For the other direction we shall use the following characterisation of free pro-$p$ groups from \cite[Theorem 7.7.4]{ribes2000profinite}: A pro-$p$ group $L$ is free if and only if every finite embedding problem, i.e, a pair of continuous epimorphisms $\varphi:L\to A, \alpha: B\to A$ is weakly solvable, meaning that there is a continuous homomorphism $\psi:L\to B$ making the following diagram commutative:
		$$\xymatrix@R=14pt{ & L \ar@{->>}[d]^{\varphi} \ar[ld]_{\psi}& \\
			B\ar [r]_{\alpha} &A \ar[r]&1\\
		}$$ 
		
		Now we assume that $G$ is locally free and we wish to prove that $\hat G$ is free. 
		Let the following diagram
		$$\xymatrix@R=14pt{ &\hat{G} \ar@{->>}[d]^{\varphi} & \\
			B\ar [r] &A \ar[r]&1\\
		}$$
		be an embedding problem for $\hat G$. Look at the induced abstract embedding problem for $G$:
		$$\xymatrix@R=14pt{ &G \ar@{->>}[d]^{\varphi\circ i} & \\
			B\ar [r] &A \ar[r]&1,\\
		}$$
		where $i$ denotes the natural homomorphism $i:G\to \hat{G}$. Since $i(G)$ is dense in $\hat{G}$ and $A$ is finite we have that $\varphi\circ i$ is surjective.
		
		Choose a finite set $X=\{x_1,...,x_n\}$ of preimages of $A$. For every finite subset $Y\subseteq G$ Let $$H_Y=\overline{\langle X\cup Y\rangle}$$ be the closed subgroup of $G$ generated by $X\cup Y$. Since $X\subseteq H_Y$ then $\varphi|_{H_Y}:H_Y\to A$ is an epimorphism. Since $H$ is finitely generated it is strongly complete and thus any homomorphism to a profinite group is continuous. So, $ \varphi|_{H_Y}:H_Y\to A$ is a continuous epimorphism. By assumption $H_Y$ is free pro-$p$ group and therefore there is a homomorphism $\psi :H_Y\to B$ such that $\alpha\circ \psi =\varphi|_{H_Y}$.
		
		Denote by $\mathcal{A}_Y$ the set of all continuous week solutions $\psi: H_Y\to B$ such that $\alpha \circ \psi =\varphi|_{H_Y}$. We have that $\mathcal{A}_Y \not = \emptyset$ and since $H_Y$ is finitely generated it follows that $\mathcal{A}_Y$ is finite.
		
		For any pair of finite subsets $Y\subseteq Z \subset G$ the restriction function $f_{ZY}: \mathcal{A}_Z\to \mathcal{A}_Y$ is defined by $\psi \mapsto \psi|_{H_Y}$ for $\psi \in \mathcal{A}_Z$. Whenever $Y\subseteq Y'\subseteq Y''$ we have $f_{Y''Y}=f_{Y'Y} \circ f_{Y''Y'}$. It follows that $\{\mathcal{A}_Y,f_{ZY}\}_{Y\subseteq_f G}$ is a directed system of nonempty finite sets and thus its inverse limit is nonempty.
		
		An element in the inverse limit is an homomorphism $f:\bigcup_{Y\subseteq_f G} H_Y\to B$ which satisfies $ \alpha\circ f=\varphi$ But $\bigcup_{Y\subseteq_f G} H_{Y} = G$. So we got a weak solution $f: G \rightarrow B$ to the embedding problem $\alpha \circ f=\varphi$. By the universal property of the profinite completion $\hat G$, $f$ induces a continuous homomorphism $\hat{f}:\hat{G}\to B$. The density of $i(G)$ in $\hat G$ implies $\alpha \circ \hat f=\varphi$, i.e. $\hat f$ is the required weak solution for the embedding problem of $\hat G$. Therefore $\hat G$ is a free pro-p group as claimed.
	\end{proof}
	\begin{cor}\label{completion of demushkin}
		Let $G$ be a pro-$p$ Demushkin group of infinite rank. Then $\hat{G}$ is a free pro-$p$ group.
	\end{cor}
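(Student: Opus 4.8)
The plan is to deduce the corollary directly from the characterization of free profinite completions established in Proposition \ref{locally free}. That proposition asserts that $\hat{G}$ is free precisely when $G$ is locally free, so it suffices to verify that every finitely generated closed subgroup $H \le G$ is itself a free pro-$p$ group. This reduces the statement to a claim purely about the closed subgroups of $G$, which is exactly the setting of Corollary \ref{open subgroups}.

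First I would observe that any finitely generated closed subgroup $H$ of $G$ automatically has infinite index. Indeed, since $G$ is a Demushkin group of infinite rank it is not finitely generated; were $H$ of finite index it would be open, and then $G$ would be generated by $H$ together with a finite transversal, making $G$ finitely generated — a contradiction. (This uses the standard fact that a profinite group possessing a finitely generated open subgroup is itself finitely generated.) Hence $[G:H]=\infty$. Now I would invoke the infinite-index clause of Corollary \ref{open subgroups}, which states that every closed subgroup of infinite index of a Demushkin group is free; this shows that $H$ is free. Since $H$ was an arbitrary finitely generated closed subgroup, $G$ is locally free, and Proposition \ref{locally free} then yields that $\hat{G}$ is free, completing the proof.

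There is no genuine obstacle here: the entire argument is a two-line chain of appeals to results already in hand. The only point requiring a moment of care is the index computation in the first step, where one must be sure to apply the \emph{infinite-index} part of Corollary \ref{open subgroups} rather than its open-subgroup part, since a finitely generated closed subgroup of an infinitely generated Demushkin group can never be open. Everything else follows formally, and no further use of Theorem \ref{inverse limit} or the cohomological machinery is needed.
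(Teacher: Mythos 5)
Your proof is correct and follows exactly the paper's own argument: reduce via Proposition \ref{locally free} to showing $G$ is locally free, note that a finitely generated closed subgroup of an infinitely generated Demushkin group must have infinite index, and apply the infinite-index clause of Corollary \ref{open subgroups}. The paper states the index step more tersely, but your justification of it (finite-index closed subgroups are open, and an open finitely generated subgroup would force $G$ to be finitely generated) is precisely the standard fact being invoked.
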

	\begin{proof}
		By Proposition \ref{locally free} it is enough to prove that every finitely generated closed subgroup of $G$ is free. Since $G$ is not finitely generated, a finitely generated closed subgroup must have an infinite index. Thus by Corollary \ref{open subgroups} we are done.  
	\end{proof}
	Combining theorem \ref{demushkin as galois group}, Proposition \ref{free as absolute} and Corollary \ref{completion of demushkin} we conclude the following:
	\begin{cor}
		For every infinite cardinal $\mu$ there exists a nonfree pro-$p$ absolute Galois group of rank $\mu$ whose profinite completion is an absolute Galois group as well.
	\end{cor}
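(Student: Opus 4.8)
The plan is simply to assemble the three preceding results, so the content of the proof is the verification that their hypotheses can be met simultaneously and that the free pro-$p$ group produced by Corollary~\ref{completion of demushkin} has infinite rank. First I would fix a prime $p$ and, for the given cardinal $\mu$, produce a pro-$p$ Demushkin group $G$ of rank $\mu$ realized as the absolute Galois group $G_F$ of some field $F$. For uncountable $\mu$ this is Theorem~\ref{demushkin as galois group} (for $p$ odd, taking for instance $q=0$), together with the pro-$2$ realization theorem of the previous section when $p=2$; for $\mu=\aleph_0$ one instead invokes the Minac--Ware realization recalled in the introduction, which furnishes such a $G$ with $s(G)=0$. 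In every case $G$ is a Demushkin pro-$p$ group of rank $\mu$ occurring as an absolute Galois group.

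Next I would note that $G$ is not free. Indeed, a Demushkin group satisfies $\operatorname{cd}(G)=2$ by Corollary~\ref{open subgroups}, so $H^2(G)\cong\F_p\neq 0$, whereas every free pro-$p$ group has cohomological dimension at most $1$. Thus $G$ already witnesses the existence of a nonfree pro-$p$ absolute Galois group of rank $\mu$.

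It remains to treat the profinite completion. Since $G$ is a Demushkin group of infinite rank, Corollary~\ref{completion of demushkin} gives that $\hat{G}$ is a free pro-$p$ group. To apply Proposition~\ref{free as absolute} I only need $\hat{G}$ to have infinite rank. This follows from the universal property of the profinite completion: precomposition with the canonical map $i\colon G\to\hat{G}$ identifies the continuous characters $\hat{G}\to\F_p$ with the abstract characters $G\to\F_p$, and these already include the $\mu$ continuous characters comprising $H^1(G)$. Hence $\dim H^1(\hat{G},\F_p)\geq\mu\geq\aleph_0$, so $\hat{G}$ is a free pro-$p$ group of infinite rank, and by Proposition~\ref{free as absolute} it is itself an absolute Galois group. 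This completes the proof.

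The substance lies entirely in the inputs Corollary~\ref{completion of demushkin} and Proposition~\ref{free as absolute}, which are already established; the remaining steps are bookkeeping. The only point needing slight care is that Theorem~\ref{demushkin as galois group} is stated for uncountable $\mu$, so the case $\mu=\aleph_0$ must be supplied separately from the prior literature.
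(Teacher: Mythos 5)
Your proof is correct and follows exactly the paper's route: the paper's proof is precisely the combination of Theorem~\ref{demushkin as galois group}, Proposition~\ref{free as absolute} and Corollary~\ref{completion of demushkin}, which is what you assemble. Your additional bookkeeping (nonfreeness via $\operatorname{cd}(G)=2$, the infinite rank of $\hat{G}$ via the universal property, and invoking Minac--Ware for the case $\mu=\aleph_0$, which the paper's cited realization theorem does not cover) makes the argument more complete than the paper's one-line justification.
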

	In fact, Corollary \ref{completion of demushkin} also gives a negative result to the open question presented by Andrew Pletch in 1982 in his paper \cite{pletch1982local}: Is every locally free pro-$p$ group free?

\end{document}